    \def\myfigpng#1#2{\includegraphics[height=#2]{#1.png}}
    \def\myfigpdf#1#2{\includegraphics[height=#2]{#1.pdf}}
    \newcommand{\href}[2]{#2}
    \def\myfigpng#1#2{\includegraphics[height=#2]{#1}} % auto-select .ps/.eps
    \def\myfigpdf#1#2{\includegraphics[height=#2]{#1}} % auto-select .ps/.eps
\newtheorem{theorem}{Theorem}[section]
\newtheorem{lemma}[theorem]{Lemma}
\newtheorem{assumption}[theorem]{Assumption}
\numberwithin{equation}{section}  %amsmath command: tie counter to section
  \newcounter{mnote}
  \let\oldmarginpar\marginpar
    \renewcommand\marginpar[1]{\-\oldmarginpar[\raggedleft\footnotesize #1]%
    {\raggedright\footnotesize #1}}
\newenvironment{algorithmX}
{\begin{trivlist}\em\item[\hskip\labelsep{\bf Algorithm:}]}{\end{trivlist}}
\definecolor{myblue}{rgb}{0.2,0.2,0.7}
\definecolor{mygreen}{rgb}{0,0.6,0}
\definecolor{mycyan}{rgb}{0,0.6,0.6}
\definecolor{myred}{rgb}{0.9,0.2,0.2}
\definecolor{mymagenta}{rgb}{0.9,0.2,0.9}
\definecolor{mywhite}{rgb}{1.0,1.0,1.0}
\definecolor{myblack}{rgb}{0.0,0.0,0.0}
\newcommand{\beq}{\begin{equation}}
\newcommand{\eeq}{\end{equation}}
\newcommand{\beqa}{\begin{eqnarray}}
\newcommand{\eeqa}{\end{eqnarray}}
\newcommand{\Ahatstar}{{}^{*}\!\!\hat{A}}
\newcommand{\PLTMG}{{\sc PLTMG}}
\newcommand{\MC}{{\sc MC}}
\newcommand{\MALOC}{{\sc MALOC}}
\newcommand{\SG}{{\sc SG}}
\newcommand{\MCLAB}{{\sc MCLab}}
\newcommand{\FETK}{{\sc FETK}}
\def\text#1{{\rm #1}}
\def\calg#1{{\mathcal #1}}
\def\bbbb#1{{\mathbb #1}}
\def\MC{{\sc MC}}
\begin{document}

\title[Adaptive Numerical Treatment of Elliptic Systems on Manifolds]
      {Adaptive Numerical Treatment of \\ Elliptic Systems on Manifolds}

\author[M. Holst]{Michael Holst}
\email{mholst@math.ucsd.edu}

\address{Department of Mathematics,
         University of California, San Diego
         9500 Gilman Drive, Dept. 0112,
         La Jolla, CA 92093-0112 USA}

\thanks{The author was supported in part by NSF CAREER Award~9875856, by NSF Grants~0225630, 0208449, 0112413, and by a UCSD Hellman Fellowship.}

%\date{\today}
\date{March 1, 2001}

%\keywords{ }

\begin{abstract}
Adaptive multilevel finite element methods are developed and analyzed
for certain elliptic systems arising in geometric analysis and
general relativity.  
This class of nonlinear elliptic systems of tensor equations on manifolds
is first reviewed, and then adaptive multilevel finite element methods for
approximating solutions to this class of problems are considered in some
detail.
Two {\em a posteriori} error indicators are derived,
based on local residuals and on global linearized
adjoint or {\em dual} problems.
The design of Manifold Code (\MC) is then discussed;
\MC\ is an adaptive multilevel finite element software package for
2- and 3-manifolds developed over several years at Caltech and UC San Diego.
It employs {\em a posteriori} error estimation, adaptive simplex subdivision, 
unstructured algebraic multilevel methods, global inexact Newton methods, 
and numerical continuation methods for the
numerical solution of nonlinear covariant elliptic systems on
2- and 3-manifolds.
Some of the more interesting features of \MC\ are described in detail,
including some new ideas for topology and geometry representation
in simplex meshes, and an unusual partition of unity-based method
for exploiting parallel computers.
A short example is then given which involves the
Hamiltonian and momentum constraints in the Einstein equations,
a representative nonlinear 4-component covariant elliptic system on
a Riemannian 3-manifold which arises in general relativity.
A number of operator properties and solvability results recently
established 
%in~\cite{HoBe01} 
are first summarized, making possible
two quasi-optimal {\em a priori} error estimates for Galerkin
approximations which are then derived.
These two results complete the theoretical framework for effective
use of adaptive multilevel finite element methods.
A sample calculation using the \MC\ software is then presented.
%more detailed examples using MC for this
%application may be found in~\cite{HoBe99,BeHo99}.
\end{abstract}

\maketitle

%\clearpage

\vspace*{-1.2cm}
{\footnotesize
\tableofcontents
}
%\vspace*{-0.5cm}

%%%%%%%%%%%%%%%%%%%%%%%%%%%%%%%%%%%%%%%%%%%%%%%%%%%%%%%%%%%%%%%%%%%%%%%%%%%%%%
% _SECTION_1_
%%%%%%%%%%%%%%%%%%%%%%%%%%%%%%%%%%%%%%%%%%%%%%%%%%%%%%%%%%%%%%%%%%%%%%%%%%%%%%
\section{Introduction}

In this paper we consider adaptive multilevel finite element
methods for certain elliptic systems arising in geometric analysis
and general relativity.  
Our interest is in developing adaptive approximation techniques
for the highly accurate and efficient numerical solution of this
class of problems.
We begin by giving a brief introduction to this class of nonlinear
elliptic systems of tensor equations on manifolds,
and then discuss adaptive multilevel finite element methods for
approximating solutions to this class of problems.
We derive two {\em a posteriori} error indicators,
the first of which is local residual-based, whereas the second
is based on a global linearized adjoint or {\em dual} problem.

The design of a computer program called Manifold Code (\MC) is then
described, which is an adaptive multilevel finite element software package for 
partial differential equations (PDEs) on 2- and 3-manifolds developed
over several years at Caltech and UC San Diego.
\MC\ employs {\em a posteriori} error estimation, adaptive simplex subdivision, 
unstructured algebraic multilevel methods, global inexact Newton methods, 
and numerical continuation methods for the accurate and efficient
numerical solution of nonlinear covariant elliptic systems on
2- and 3-manifolds.
We describe some of the more interesting features of \MC\ in detail,
including some new ideas for topology and geometry representation
in simplex meshes.
We also describe an unusual partition of unity-based method in \MC\ for
using parallel computers in an adaptive setting,
based on joint work with R. Bank~\cite{BaHo98a}.
Global $L^2$- and $H^1$-error estimates are derived for solutions
produced by MC's parallel algorithm by using
Babu\v{s}ka and Melenk's Partition of Unity Method (PUM)
error analysis framework~\cite{BaMe97} and by exploiting the recent results
of Xu and Zhou on local error estimation~\cite{XuZh97}.

We finish with an example involving the Hamiltonian and momentum
constraints in the Einstein equations, a representative nonlinear
4-component covariant elliptic system on a Riemannian 3-manifold which
arises in general relativity.
We first summarize a number of operator properties and solvability
results which were established recently in~\cite{HoBe01}.
We then derive two quasi-optimal {\em a priori} error estimates for
Galerkin approximations of the constrains, completing the theoretical
framework for effective use of adaptive multilevel finite element methods.
We then present a sample calculation using the \MC\ software for this
application.  More detailed examples involving the use of MC for the Einstein
constraints may be found in~\cite{HoBe99,BeHo99}.
Applications of \MC\ to problems in other areas such as biology
and elasticity can be found in~\cite{HBW99,BHW99,BaHo98a}.

%%%%%%%%%%%%%%%%%%%%%%%%%%%%%%%%%%%%%%%%%%%%%%%%%%%%%%%%%%%%%%%%%%%%%%%%%%%%%%
% _SECTION_2_
%%%%%%%%%%%%%%%%%%%%%%%%%%%%%%%%%%%%%%%%%%%%%%%%%%%%%%%%%%%%%%%%%%%%%%%%%%%%%%
\section[Adaptive Finite Element Methods
         for Nonlinear Equations]
        {Adaptive Multilevel Finite Element Methods
         for Nonlinear Elliptic Equations}
  \label{sec:elliptic_overview}

In this section we will first give an overview of nonlinear elliptic
equations on manifolds, followed by a brief description of adaptive 
multilevel finite element techniques for such equations.

%%%%%%%%%%%%%%%%%%%%%%%%%%%%%%%%%%%%%%%
\subsection{Nonlinear elliptic equations on manifolds}
  \label{sec:elliptic_general}

Let $(\calg{M},g_{ab})$ be a connected compact Riemannian $d$-manifold with
boundary $(\partial \calg{M},\sigma_{ab})$,
where the boundary
metric $\sigma_{ab}$ is inherited from $g_{ab}$.
To allow for general boundary conditions, 
we will view the boundary $(d-1)$-submanifold $\partial \calg{M}$
(which we assume to be oriented)
as being formed from two disjoint
submanifolds $\partial_0 \calg{M}$ and $\partial_1 \calg{M}$, 
i.e.,
\begin{equation}
   \label{eqn:bndry}
\partial_0 \calg{M} \cup \partial_1 \calg{M} = \partial \calg{M},
\ \ \ \ \ \ \ \ 
\partial_0 \calg{M} \cap \partial_1 \calg{M} = \emptyset.
\end{equation}
When convenient in the discussions below, one of the two submanifolds
$\partial_0 \calg{M}$ or $\partial_1 \calg{M}$ may be allowed to shrink to
zero measure, leaving the other to cover $\partial \calg{M}$.
Moreover, in what follows it will usually be necessary to make smoothness
assumptions about the boundary submanifold $\partial \calg{M}$, such as
Lipschitz continuity (for a precise definition see~\cite{Adam78}).
We will employ the abstract index notation (cf.~\cite{Wald84}) and
summation convention for tensor expressions below, with indices
running from $1$ to $d$ unless otherwise noted.
The summation convention is that all repeated symbols in products
imply a sum over that index.
Partial differentiation on a non-flat manifold must be covariant, meaning
that application of gradient and divergence operators require the use of
a connection due to the curvilinear nature of the coordinate
system used to describe the domain manifold.
Christoffel symbols formed with respect to the given
metric $g_{ab}$ (at times denoted $\hat{\gamma}_{ab}$)
provide default connection coefficients.

Covariant partial differentiation of a tensor
$t^{a_1\cdots a_p}_{~~~~~~~b_1\cdots b_q}$
using the connection provided by the metric $g_{ab}$ will be denoted
as $t^{a_1\cdots a_p}_{~~~~~~~b_1\cdots b_q;c}$
or as $D_c t^{a_1\cdots a_p}_{~~~~~~~b_1\cdots b_q}$.
Denoting the outward unit normal to $\partial \calg{M}$ as $n_b$,
recall the Divergence Theorem for a vector
field $w^b$ on $\calg{M}$ (cf.~\cite{Lee97}):
\begin{equation}
   \label{eqn:divergence_vector}
\int_{\calg{M}} w^b_{~;b} ~dx = \int_{\partial \calg{M}} w^b n_b ~ds,
\end{equation}
where $dx$ denotes the measure on $\calg{M}$ generated by the volume
element of $g_{ab}$:
\begin{equation}
   \label{eqn:volume_element}
dx = \sqrt{\text{det}~g_{ab}}~dx^1 \cdots dx^d,
\end{equation}
and where $ds$ denotes the boundary measure on $\partial \calg{M}$
generated by the boundary volume element of $\sigma_{ab}$.
Making the choice $w^b = u_{a_1\ldots a_k} v^{a_1\ldots a_kb}$
in~(\ref{eqn:divergence_vector})
and forming the divergence $w^b_{~;b}$ by applying the product rule
leads to a useful integration-by-parts formula for certain
contractions of tensors:
\begin{eqnarray}
   \label{eqn:divergence}
  \int_{\calg{M}}
    u_{a_1\ldots a_k} v^{a_1\ldots a_kb}_{~~~~~~~~~;b} ~dx
&=& \int_{\partial \calg{M}}
    u_{a_1\ldots a_k} v^{a_1\ldots a_kb} n_b ~ds
\\
& &
- \int_{\calg{M}}
    v^{a_1\ldots a_kb} u_{a_1\ldots a_k;b} ~dx.
\nonumber
\end{eqnarray}
When $k=0$ this reduces to the familiar case where $u$ and $v$ are scalars.

%%%%%%%%%%%%%%%%%%%%%%%%%%%%%%%%%%%%%%%
\subsubsection{Coupled elliptic systems and augumented systems}

Consider now a general second-order elliptic system
of tensor equations in strong divergence form
over $\calg{M}$:
\begin{eqnarray}
\label{eqn:str1}
  - A^{ia}(x^b,u^j,u^k_{~;c},\lambda)_{;a} + B^i(x^b,u^j,u^k_{~;c},\lambda)
      & = & 0 ~\text{~in~} \calg{M}, \\ 
\label{eqn:str2}
    A^{ia}(x^b,u^j,u^k_{~;c},\lambda) n_a + C^i(x^b,u^j,u^k_{~;c},\lambda)
      & = & 0 ~\text{~on~} \partial_1 \calg{M}, \\ 
\label{eqn:str3}
u^i(x^b) & = & E^i(x^b,\lambda) ~\text{~on~} \partial_0 \calg{M},
\end{eqnarray}
where
$$
\lambda \in \bbbb{R}^m,
\ \ \ 
1 \le a,b,c \le d,
\ \ \ 
1 \le i,j,k \le n,
$$
$$
A : \calg{M} \times \bbbb{R}^n \times \bbbb{R}^{nd} \times \bbbb{R}^m
            \mapsto \bbbb{R}^{nd},
\ \ \ 
B : \calg{M} \times \bbbb{R}^n \times \bbbb{R}^{nd} \times \bbbb{R}^m
            \mapsto \bbbb{R}^n,
$$
$$
C : \partial_1 \calg{M} \times \bbbb{R}^n \times \bbbb{R}^{nd} \times \bbbb{R}^m
            \mapsto \bbbb{R}^n,
\ \ \ 
E : \partial_0 \calg{M} \times \bbbb{R}^m \mapsto \bbbb{R}^n.
$$
The divergence-form system~(\ref{eqn:str1})--(\ref{eqn:str3}), together
with the boundary conditions, can be viewed as 
an operator equation of the form
\begin{equation}
   \label{eqn:parm_pre}
G(u,\lambda) = 0,
\ \ \ \ \
G : \calg{B}_1 \times \bbbb{R}^m \mapsto \calg{B}_2^*,
\end{equation}
for some Banach spaces $\calg{B}_1$ and $\calg{B}_2$, where $\calg{B}_2^*$
denotes the dual space of $\calg{B}_2$.
Analysis and numerical techniques often require the Gateaux-linearization 
operator $D_uG(u) \in \calg{L}(\calg{B}_1,\calg{B}_2^*)$.
If $D_uG(u_0,\lambda_0)$ is a linear homeomorphism from $\calg{B}_1$ to
$\calg{B}_2^*$, then the Implicit Function Theorem guarantees that there is
a neighborhood of $(\lambda_0,u_0) \in \bbbb{R}^m \times \calg{B}_1$ 
containing regular solutions to~(\ref{eqn:parm_pre}).
If $D_uG(u_0,\lambda_0)$ is singular, so that the Implicit Function
Theorem does not apply, then the standard approach is to expand the
solution spaces in such a way that the expanded problem has a regular solution.
This is called the {\em augmented} or {\em bordered system} approach to
handling folds and bifurcations, and is the basis for sophisticated numerical
path-following algorithms~\cite{Kell87,Kell92}.
In the case of a simple limit point (or {\em fold}),
where $D_uG(u_0)$ is a Fredholm
operator of $\calg{B}_1$ into $\calg{B}_2^*$ with index $m$,
then the augmented system approach involves simply adding a set of $m$ linear 
constraints to the original system, producing:
\begin{equation}
   \label{eqn:parm}
F(u,\lambda,s) = \left[ \begin{array}{c}
                      G(u,\lambda) \\
                      N(u,\lambda,s)
                      \end{array}
                \right] = 0,
\ \ \ \
F: \calg{B}_1 \times \bbbb{R}^m \times \bbbb{R}^m
     \mapsto \calg{B}_2^* \times \bbbb{R}^m.
\end{equation}
If the augmentation function
$N(u,\lambda,s) : \calg{B}_1 \times \bbbb{R}^m \times \bbbb{R}^m
         \mapsto \bbbb{R}^m$
is chosen correctly, then the linearization operator
$DF \in \calg{L}(\calg{B}_1 \times \bbbb{R}^m,
                 \calg{B}_2^* \times \bbbb{R}^m)$ for the whole system,
which can be written as
\begin{equation}
    \label{eqn:parm_linearized}
DF(u,\lambda,s) = \left[ \begin{array}{ll}
            D_u G(u,\lambda)   & D_{\lambda} G(u,\lambda)   \\ 
            D_u N(u,\lambda,s) & D_{\lambda} N(u,\lambda,s) \\ 
            \end{array} \right],
\end{equation}
becomes a homeomorphism again.

Our interest here is primarily in coupled systems
of one or more scalar field equations
and one or more $d$-vector field equations, possibly
augmented as in~(\ref{eqn:parm})--(\ref{eqn:parm_linearized}).
The unknown $n$-vector $u^i$ then in general consists
of $n_s$ scalars and $n_v$ $d$-vectors, so that $n = n_s + n_v \cdot d$.
To allow the $n$-component system~(\ref{eqn:str1})--(\ref{eqn:str3})
to be treated notationally as if it were a single $n$-vector equation,
it will be convenient to introduce the following notation for the
unknown vector $u^i$ and for the metric of the product space of
scalar and vector components of $u^i$:
\begin{equation}
   \label{eqn:product_metric}
\calg{G}_{ij} = \left[ \begin{array}{ccc}
         g_{ab}^{(1)} &        & 0      \\
                      & \ddots &              \\
         0            &        & g_{ab}^{(n_e)} \\
         \end{array} \right],
~~~~~~
u^i = \left[ \begin{array}{c}
         u^a_{(1)}      \\
         \vdots         \\
         u^a_{(n_e)}      \\
         \end{array} \right],
~~~~~n_e=n_s+n_v.
\end{equation}
If $u^a_{(k)}$ is a $d$-vector
we take $g_{ab}^{(k)} = g_{ab}$; if $u^a_{(k)}$ is a scalar
we take $g_{ab}^{(k)}=1$.

%%%%%%%%%%%%%%%%%%%%%%%%%%%%%%%%%%%%%%%
\subsubsection{Weak formulations}

The weak form of~(\ref{eqn:str1})--(\ref{eqn:str3}) is obtained by taking the
$L^2$-based duality pairing between a vector $v^j$
(vanishing on $\partial_0 \calg{M}$) lying in a product space
of scalars and tensors, and the residual of the
tensor system~(\ref{eqn:str1}), yielding:
\begin{equation}
   \label{eqn:weak_almost}
\int_{\calg{M}} \calg{G}_{ij} \left( B^i - A^{ia}_{\ \ ;a} \right) v^j 
  ~dx = 0.
\end{equation}
Due to the definition of $\calg{G}_{ij}$ in~(\ref{eqn:product_metric}),
this is simply a sum of integrals of scalars, each of which is a contraction
of the type appearing on the left side in~(\ref{eqn:divergence}).
Using then~(\ref{eqn:divergence}) and~(\ref{eqn:str2}) together
in~(\ref{eqn:weak_almost}),
and recalling that $v^i=0$ on $\partial_0 \calg{M}$
satisfying~(\ref{eqn:bndry}), yields
\begin{equation}
    \label{eqn:weak_general}
    \int_{\calg{M}} \calg{G}_{ij} A^{ia} v^j_{~;a}~dx
  + \int_{\calg{M}} \calg{G}_{ij} B^i v^j~dx
  + \int_{\partial_1 \calg{M}} \calg{G}_{ij} C^i v^j~ds
  = 0.
\end{equation}
Equation~(\ref{eqn:weak_general}) leads to a
covariant weak formulation of the problem:
\begin{equation}
   \label{eqn:weak}
\text{Find}~ u \in \bar{u} + \calg{B}_1 ~\text{s.t.}~
     \langle F(u),v \rangle = 0,
   \ \ \forall~ v \in \calg{B}_2,
\end{equation}
for suitable Banach spaces of functions $\calg{B}_1$ and $\calg{B}_2$,
where the nonlinear weak form $\langle F(\cdot),\cdot \rangle$
can be written as:
\begin{equation}
\label{eqn:weakForm}
  \langle F(u),v \rangle
    = \int_{\calg{M}} \calg{G}_{ij} (A^{ia} v^j_{~;a} + B^i v^j)~dx
           + \int_{\partial_1 \calg{M}} \calg{G}_{ij} C^i v^j ~ds.
\end{equation}
The notation $\langle w,v \rangle$ will represent the duality pairing
of a function $v$ in a Banach space $\calg{B}$ with
a bounded linear functional (or {\em form}) $w$
in the dual space $\calg{B}^*$.
Depending on the particular function spaces involved,
the pairing may be thought of as coinciding with the $L^2$-inner-product
through the Riesz Representation Theorem~\cite{Yosi80}.
The affine shift tensor $\bar{u}$ in~(\ref{eqn:weak})
represents the essential or Dirichlet part
of the boundary condition if there is one; the existence of $\bar{u}$ such that 
$E = \bar{u} |_{\partial_0 \calg{M}}$ in the sense
of the Trace operator is guaranteed by the Trace Theorem
for Sobolev spaces on manifolds with boundary~\cite{Wlok92},
as long as $E^i$ in~(\ref{eqn:str3}) and $\partial_0 \calg{M}$ are smooth
enough.
If normalization is required as in~(\ref{eqn:parm}),
then the weak formulation also reflects the normalization.

%%%If normalization is required as in~(\ref{eqn:parm}),
%%%then the weak formulation
%%%also reflects the normalization:
%%%\begin{equation}
%%%   \label{eqn:weakNormalized}
%%%\text{Find}~ [u,\lambda] \in \{ \bar{u} + \calg{B}_1 \} \times \bbbb{R}^m
%%%    ~\text{s.t.}~
%%%     \langle F([u,\lambda],s),[v,\mu] \rangle = 0,
%%%   \ \ \forall~ [v,\mu] \in \calg{B}_2 \times \bbbb{R}^m,
%%%\end{equation}
%%%where
%%%\begin{equation}
%%%\label{eqn:weakFormNormalized}
%%%  \langle F([u,\lambda],s),[v,\mu] \rangle
%%%    = \int_{\calg{M}} \calg{G}_{ij} (A^{ia} v^j_{~;q} + B^i v^j)~dx
%%%           + \int_{\partial_1 \calg{M}} \calg{G}_{ij} C^i v^j ~ds
%%%           + \int_{\calg{M}} N(u,\lambda,s) \mu ~dx.
%%%\end{equation}

%%%%%%%%%%%%%%%%%%%%%%%%%%%%%%%%%%%%%%%
\subsubsection{Sobolev spaces of tensors}

The Banach spaces which arise naturally as solution spaces for the
class of nonlinear elliptic systems in~(\ref{eqn:weak}) are product
spaces of the Sobolev spaces $W_{0,D}^{k,p}(\calg{M})$.
This is due to the fact that under suitable growth conditions
on the nonlinearities in $F$, it can be shown
(essentially by applying the H\"{o}lder inequality)
that there exists $p_k,q_k,r_k$ satisfying $1 < p_k,q_k,r_k < \infty$
such that the choice
\begin{equation}
   \label{eqn:banach1}
\calg{B}_1 = W_{0,D}^{1,r_1}
      \times \cdots \times W_{0,D}^{1,r_{n_e}},
~~~~
\calg{B}_2 = W_{0,D}^{1,q_1}
      \times \cdots \times W_{0,D}^{1,q_{n_e}},
\end{equation}
\begin{equation}
   \label{eqn:banach2}
\frac{1}{p_k} + \frac{1}{q_k} = 1,
~~~~
r_k \ge \text{min} \{p_k,q_k\},
~~~~
k=1,\ldots,n_e,
\end{equation}
ensures $\langle F(u),v \rangle$ in~(\ref{eqn:weakForm})
remains finite for all arguments~\cite{FuKu80}.

The Sobolev spaces are also fundamental to the theory of the finite element
method, which is based essentially on subspace projection and best
approximation.
The Sobolev spaces $W^{k,p}(\calg{M})$ of tensors on manifolds, and the
various subspaces such as $W_{0,D}^{k,p}(\calg{M})$ which we will need to
make use of later in the paper, can be defined
as follows (cf.~\cite{HaEl73,Aubi82,Hebe91} for more complete discussions).
For a type $(r,s)$-tensor
$T^{a_1a_2 \cdots a_r}_{~~~~~~~~~b_1b_2\cdots b_s} = T^I_{~J}$,
where $I$ and $J$ are (tensor) multi-indices satisfying
$|I|=r$, $|J|=s$, define
\begin{equation}
   \label{eqn:l2_ext}
|T^I_{~J}| = \left( T^I_{~J} T^L_{~M} g_{IL}g^{JM} \right)^{1/2}.
\end{equation}
Here, $g_{IJ}$ and $g^{IJ}$ are generated from the
Riemannian $d$-metric $g_{ab}$ on $\calg{M}$ as follows:
\begin{equation}
   \label{eqn:metric_lebesgue}
g_{IJ} = g_{ab}g_{cd} \cdots g_{pq},
~~~~~~~~
g^{IJ} = g^{ab}g^{cd} \cdots g^{pq},
\end{equation}
where $|I|=|J|=m$, producing $m$ terms in each product.
Expression~(\ref{eqn:l2_ext}) is just an extension of the
Euclidean $l^2$-norm for vectors
in $\bbbb{R}^d$.
For example, in the case of a 3-manifold,
taking $|I|=1$, $|J|=0$, $g_{ab} = \delta_{ab}$, gives:
$$
|T^I_{~J}| = |T^a| = \left( T^a T^b g_{ab} \right)^{1/2}
= \left( T^a T^b \delta_{ab} \right)^{1/2}
= \| T^a \|_{l^2(\bbbb{R}^3)}.
$$

Covariant (distributional) differentiation of order $m=|K|$
(for some tensor multi-index $K$) using a connection generated by
$g_{ab}$, or generated by possibly a different metric, is denoted
as any of:
\begin{equation}
   \label{eqn:metric_distribution}
D^m T^I_{~J} = D_K T^I_{~J}
             = T^I_{~J;K},
\end{equation}
where $m$ should not be confused with a tensor index.
Employing the measure $dx$ on $\calg{M}$ defined
in~(\ref{eqn:volume_element}),
the $L^p$-norm of a tensor on $\calg{M}$ is defined as:
\begin{equation}
   \label{eqn:lp_norm}
   \|T^I_{~J}\|_{L^p(\calg{M})} 
     = \left( \int_{\calg{M}} | T^I_{~J} |^p ~dx \right)^{1/p},
\end{equation}
and the resulting $L^p$-spaces for $1 \le p < \infty$ are defined as:
\begin{equation}
   \label{eqn:lp}
L^p(\calg{M}) = \left\{~ T^I_{~J} ~|~
   \|T^I_{~J}\|_{L^p(\calg{M})} < \infty ~\right\}.
\end{equation}
When discussing the properties of $L^p$-functions over a manifold $\calg{M}$
we will use the notation {\em a.e.}, meaning that the property is understood
to hold ``almost everywhere'' in the sense of Lebesgue measure.
We will at times need to make use of the (extended) H\"{o}lder and Minkowski
inequalities for tensors in $L^p$-spaces:
\begin{equation}
   \label{eqn:holder}
%%%\int_{\calg{M}} U^I_{~J} W^J_{~I} ~dx
    \|U^I_{~J} W^J_{~I} \|_{L^r(\calg{M})}
\le \|U^I_{~J}\|_{L^p(\calg{M})} 
    \|W^I_{~J}\|_{L^q(\calg{M})},
\end{equation}
\begin{equation}
   \label{eqn:minkowski}
    \|U^I_{~J} + V^I_{~J} \|_{L^p(\calg{M})}
\le \|U^I_{~J} \|_{L^p(\calg{M})}
  + \|V^I_{~J} \|_{L^p(\calg{M})},
\end{equation}
which hold when $U^I_{~J}, V^I_{~J} \in L^p(\calg{M})$,
$W^I_{~J} \in L^q(\calg{M})$, $1/p + 1/q = 1/r$,
$1 \le p,q,r < \infty$.
The H\"{o}lder inequality also extends to the case $p=1$, $q=\infty$, $r=1$,
where
$\|U^I_{~J}\|_{L^{\infty}(\calg{M})}
    = \text{ess}~\sup_{x \in \calg{M}} |U^I_{~J}(x)|$.

The Sobolev semi-norm of a tensor is defined through~(\ref{eqn:lp_norm}) as:
\begin{equation}
    \label{eqn:h1_norm_semi}
|T^I_{~J}|_{W^{m,p}(\calg{M})}^p
= \sum_{|K|=m} \| T^I_{~J;K} \|_{L^p(\calg{M})}^p,
\end{equation}
and the Sobolev norm is subsequently defined using~(\ref{eqn:h1_norm_semi}) as:
\begin{equation}
   \label{eqn:h1_norm}
\|T^I_{~J}\|_{W^{k,p}(\calg{M})}
= \left( \sum_{0\le m\le k}
     | T^I_{~J} |_{W^{m,p}(\calg{M})}^p \right)^{1/p}.
\end{equation}
The resulting Sobolev spaces of tensors are then defined
using~(\ref{eqn:h1_norm}) as:
\begin{equation}
    \label{eqn:h1}
W^{k,p}(\calg{M}) = \left\{~ T^I_{~J} ~|~
   \|T^I_{~J}\|_{W^{k,p}(\calg{M})} < \infty ~\right\},
\end{equation}
\begin{equation}
    \label{eqn:h1_0}
W^{k,p}_0(\calg{M}) 
     = \left\{~ \text{Completion~of}~ C_0^\infty(\calg{M})
    \text{~w.r.t.~} \|\cdot\|_{W^{k,p}(\calg{M})} ~\right\},
\end{equation}
where $C_0^{\infty}(\calg{M})$ is the space of $C^{\infty}$-tensors with
compact support in $\calg{M}$.
The space $W^{k,p}_0(\calg{M})$ in~(\ref{eqn:h1_0}) is a special case of
$W^{k,p}_{0,D}(\calg{M})$, which can be characterized as:
\begin{equation}
   \label{eqn:h1_0d}
W^{k,p}_{0,D}(\calg{M}) = \left\{ T^I_{~J} \in W^{k,p} ~|~
             \text{tr}~T^I_{~J;K} = 0 ~\text{on}~ \partial_0 \calg{M},
   |K| \le k-1 \right\}.
\end{equation}
Note that if the metric used to define covariant
differentiation in~(\ref{eqn:metric_distribution}) is taken to be different
from the metric $g_{ab}$ used in~(\ref{eqn:metric_lebesgue}),
it can still be shown that the norms generated
by~(\ref{eqn:h1_norm}) are equivalent,
so that the resulting Sobolev spaces have exactly the same
topologies~\cite{HaEl73}.

The Hilbert space special case of $p=2$ is given a simplified notation:
\begin{equation}
   \label{eqn:hk}
H^k(\calg{M}) = W^{k,2}(\calg{M}),
\end{equation}
with the same convention used for the various subspaces of $H^k(\calg{M})$
such as $H_0^k(\calg{M})$ and $H_{0,D}^k(\calg{M})$.
The norm on $H^k(\calg{M})$ defined above is then actually induced
by an inner-product as follows:
$\| T^I_{~J} \|_{H^k(\calg{M})}
   = (T^I_{~J},T^I_{~J})_{H^k(\calg{M})}^{1/2}$,
where
\begin{equation}
    \label{eqn:l2_innerproduct}
(T^I_{~J},S^I_{~J})_{L^2(\calg{M})}
     = \int_{\calg{M}}
      T^I_{~J} S^L_{~M} g_{IL}g^{JM} ~dx,
\end{equation}
and where
\begin{equation}
    \label{eqn:hk_innerproduct}
(T^I_{~J},S^I_{~J})_{H^k(\calg{M})}
 = \sum_{0\le |K| \le k} ( T^I_{~J;K}, S^I_{~J;K} )_{L^2(\calg{M})}.
\end{equation}

Finally, note that Sobolev trace spaces of tensors living on boundary
submanifolds as
needed for discussing boundary-value problems can be defined under some
smoothness assumptions on the boundary,
and spaces based on fractional-order differentiation
(take $k \in \bbbb{R}$ in the discussion above)
can be defined in several different ways (cf.~\cite{Adam78,Aubi82}).

%%%%%%%%%%%%%%%%%%%%%%%%%%%%%%%%%%%%%%%
\subsection[Adaptive FE methods
            for nonlinear elliptic systems]
           {Adaptive multilevel finite element methods
            for nonlinear elliptic systems}

A {\em Petrov-Galerkin} approximation of the solution to
(\ref{eqn:weak}) is the solution to the following subspace problem:
\begin{equation}
\text{Find}~ u_h \in \bar{u}_h + U_h \subset \calg{B}_1 ~\text{s.t.}~
     \langle F(u_h),v \rangle = 0,
   \ \ \forall~ v \in V_h \subset \calg{B}_2,
   \label{eqn:galerkin}
\end{equation}
for some chosen subspaces $U_h$ and $V_h$,
where $\text{dim}(U_h) = \text{dim}(V_h) = n$.
A {\em Galerkin} approximation refers to the case that $U_h = V_h$.
A {\em finite element} method is a Petrov-Galerkin or Galerkin
method in which the subspaces $U_h$ and $V_h$ are chosen
to have the extremely simple form of continuous piecewise polynomials
with local support, defined over a disjoint covering of the domain 
manifold $\calg{M}$ by {\em elements}.
A global $C^0$-basis on the manifold may be defined element-wise
from local basis functions defined on a reference simplex by use of the
chart structure provided with the manifold.
For example, in the case of continuous piecewise linear polynomials 
on 2-simplices (triangles) or 3-simplices (tetrahedra),
the reference element is equipped with the usual basis as
shown in Figure~\ref{fig:bases}.
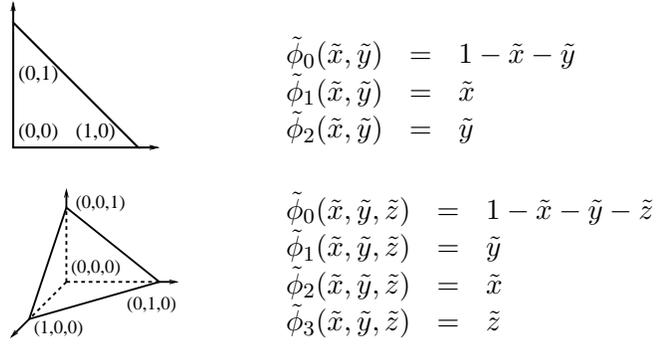
\begin{figure}
   \label{fig:bases}
\begin{center}
\begin{picture}(300,60)
\put(50,10){
    \mbox{\myfigpdf{element2d}{0.8in}}
}
\put(150,30){
    \mbox{\small
    $
    \begin{array}{ccl}
    \tilde{\phi}_0(\tilde{x},\tilde{y}) & = & 1 - \tilde{x} - \tilde{y} \\
    \tilde{\phi}_1(\tilde{x},\tilde{y}) & = & \tilde{x} \\
    \tilde{\phi}_2(\tilde{x},\tilde{y}) & = & \tilde{y}
    \end{array}
    $
    }
}
\end{picture}
\begin{picture}(300,60)
\put(50,0){
    \mbox{\myfigpdf{element3d}{0.8in}}
}
\put(150,25){
    \mbox{\small
    $
    \begin{array}{ccl}
    \tilde{\phi}_0(\tilde{x},\tilde{y},\tilde{z}) & = & 1
                       - \tilde{x} - \tilde{y} - \tilde{z} \\
    \tilde{\phi}_1(\tilde{x},\tilde{y},\tilde{z}) & = & \tilde{y} \\
    \tilde{\phi}_2(\tilde{x},\tilde{y},\tilde{z}) & = & \tilde{x} \\
    \tilde{\phi}_3(\tilde{x},\tilde{y},\tilde{z}) & = & \tilde{z}
    \end{array}
    $
    }
}
\end{picture}
\end{center}
\caption{Canonical linear references bases.}
\end{figure}
The chart structure provides mappings between the elements
contained in each coordinate patch and the unit simplex.
If the manifold domain can be triangulated exactly with simplex elements
(possibly as a polyhedral approximation to an underlying smooth surface),
then the coordinate transformations are simply affine transformations.
In this sense, finite element methods are by their very nature
defined in a chart-wise manner.
Algorithms for smooth ($C^k$) 2-surface representations using manifolds
have been considered recently in~\cite{GrHu95,Grim96};
some interesting related work appeared in~\cite{DaHm89,DaMi84}.

Due to the non-smooth behavior of their derivatives along simplex vertices,
edges, and faces in the disjoint simplex covering of $\calg{M}$, such
continuous piecewise polynomial bases clearly do not span a subspace
of $\calg{C}^1(\calg{M})$; however, one can show~\cite{Ciar78} that in fact:
$$
V_h = \text{span}\{\phi_1,\ldots,\phi_n\} \subset W^{1,p}_{0,D}(\calg{M}),
~~ \calg{M} \subset \bbbb{R}^d,
$$
so that continuous, piecewise defined, low-order
polynomial spaces do in fact form a subspace of the solution space
to the weak formulation of the class of second order elliptic equations of
interest.
Making then the choice
$
U_h = \text{span}\{\phi_1,\phi_2,\ldots,\phi_n\},
$
$
V_h = \text{span}\{\psi_1,\psi_2,\ldots,\psi_n\},
$
equation~(\ref{eqn:galerkin}) in the case of a scalar unknown
reduces to a set of $n$ nonlinear algebraic 
relations (implicitly defined) for the $n$ coefficients $\{\alpha_j\}$
in the expansion
\begin{equation}
   \label{eqn:soln}
u_h = \bar{u}_h + \sum_{j=1}^n \alpha_j \phi_j,
\end{equation}
with suitable modification for a vector unknown.
In particular, regardless of the complexity of the form
$\langle F(u),v \rangle$,
as long as we can evaluate it for given $u$ and $v$, then we
can evaluate the discrete nonlinear residual of the finite element 
approximation $u_h$ as:
$$
    F_i = \langle F(\bar{u}_h + \sum_{j=1}^n \alpha_j \phi_j),\psi_i \rangle,
         \ \ \ \ i=1,\ldots,n.
$$
Since the form $\langle F(u),v \rangle$
involves an integral in this setting, if we
employ quadrature then we can simply sample the integrand at quadrature points;
this is a standard technique in finite element technology.
Given the local support nature of the functions $\phi_j$ and $\psi_i$,
all but a small constant number of terms in the sum 
$\sum_{j=1}^n \alpha_j \phi_j$ are zero at a particular spatial point
in the domain, so that the residual $F_i$ is inexpensive to evaluate when
quadrature is employed.

The two primary issues in using the approximation method are:
\begin{enumerate}
\item Functionals $\calg{E}(u-u_h)$ of the error $u-u_h$ (such as norms), and
\item Complexity of solving the $n$ nonlinear algebraic equations.
\end{enumerate}
The first of these issues represents the core of finite element approximation
theory, which itself rests on the results of classical approximation theory.
Classical references to both topics include~\cite{Ciar78,DeLo93,Davi63}.
The second issue is addressed by the complexity theory of direct and
iterative solution methods for sparse systems of linear and nonlinear
algebraic equations, cf.~\cite{Hack94,OrRh70}.

%%%%%%%%%%%%%%%%%%%%%%%%%%%%%%%%%%%%%%%
\subsubsection{Approximation quality: error estimation and adaptive methods}

{\em A priori} error analysis for the finite element method for addressing
the first issue is now a very well-understood subject~\cite{Ciar78,BrSc94}.
Much activity has recently been centered around
{\em a posteriori} error estimation and the use of error indicators based
on such estimates in conjunction with adaptive mesh
refinement algorithms~\cite{PLTMG,BaRh78a,BaRh78b,Verf94,Verf96,XuZh97}.
These indicators include weak and strong residual-based 
indicators~\cite{BaRh78a,BaRh78b,Verf94}, indicators based on
the solution of local problems~\cite{BaSm93,BaWe85}, and indicators
based on the solution of global (but linearized) adjoint or {\em dual}
problems~\cite{EHM2001}.
The challenge for a numerical method is to be as efficient as possible,
and {\em a posteriori} estimates are a basic tool in deciding which
parts of the solution require additional attention.
While the majority of the work on {\em a posteriori} estimates and indicators
has been for linear problems, nonlinear extensions are possible through
linearization theorems~(cf.~\cite{Verf94,Verf96}).
The typical solve-estimate-refine structure in simplex-based adaptive finite
element codes exploiting these {\em a posteriori} indicators is illustrated
in Algorithm~\ref{alg:adapt}.
\begin{algorithmX}
   \label{alg:adapt}
{\em (Adaptive multilevel finite element approximation)}
\begin{itemize}
\item While ($\calg{E}(u - u_h)$ is ``large'') do:
    \begin{enumerate}
    \item Find $u_h \in \bar{u}_h + U_h \subset \calg{B}_1$ such that $\langle F(u_h),v \rangle=0, ~\forall~ v \in V_h \subset \calg{B}_2$.
    \item Estimate $\calg{E}(u-u_h)$ over each element.
    \item Initialize two temporary simplex lists as empty: $Q1=Q2=\emptyset$.
    \item Simplices which fail an indicator test using equi-distribution of the chosen error functional $\calg{E}(u-u_h)$ are placed on the ``refinement'' list~$Q1$.
    \item Bisect all simplices in~$Q1$ (removing them from~$Q1$), and place any nonconforming simplices created on the list~$Q2$.
    \item $Q1$ is now empty; set $Q1$ = $Q2$, $Q2 = \emptyset$.
    \item If $Q1$ is not empty, goto (5).
    \end{enumerate}
 \item End While.
\end{itemize}
\end{algorithmX}
The conformity loop (5)--(7), required to produce a globally ``conforming''
mesh (described below) at the end of a refinement step, is guaranteed to 
terminate in a finite number of steps (cf.~\cite{Riva84,Riva91}),
so that the refinements remain local.
Element shape is crucial for approximation quality; the bisection 
procedure in step~(5) is guaranteed to produce nondegenerate families 
if the longest edge is bisected in two dimensions~\cite{RoSt75,Styn80}, 
and if marking or homogeneity methods are used 
in three dimensions~\cite{AMP97,Mukh96,Bans91a,Bans91b,LiJo95,Maub95}.
Whether longest edge bisection is nondegenerate in three dimensions apparently
remains an open question.
Figure~\ref{fig:refine} shows a single subdivision of a 2-simplex or a
3-simplex using either 4-section (left-most figure),
8-section (fourth figure from the left),
or bisection (third figure from the left, and the right-most figure).
\begin{figure}
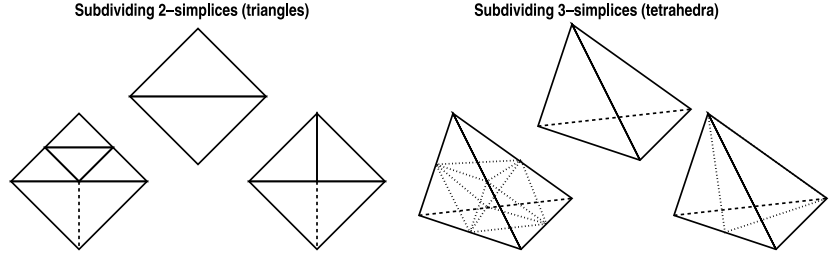

\begin{center}
\mbox{\myfigpdf{refine2}{1.3in}}
\end{center}
   \caption{Refinement of 2- and 3-simplices using 4-section,
            8-section, and bisection.}
   \label{fig:refine}
\end{figure}
The paired triangle in the 2-simplex case of Figure~\ref{fig:refine} 
illustrates the nature of conformity and its violation during refinement.
A globally conforming simplex mesh is defined as a collection of simplices
which meet only at vertices and faces; for example, removing the dotted
bisection in the third group from the left in Figure~\ref{fig:refine}
produces a non-conforming mesh.
Non-conforming simplex meshes create several theoretical as well as practical
implementation difficulties;
while the queue-swapping presented in Algorithm~\ref{alg:adapt} above
is a feature unique to \MC~(see Section~\ref{sec:mc}),
an equivalent approach is taken in \PLTMG\ ~\cite{PLTMG}
and similar packages~\cite{Mukh96,BER95,Bey96,BBJL98}.

%%%%%%%%%%%%%%%%%%%%%%%%%%%%%%%%%%%%%%%
\subsubsection{Computational complexity: solving linear and nonlinear systems}

Addressing the complexity of Algorithm~\ref{alg:adapt},
Newton-like methods as illustrated in Algorithm~\ref{alg:newton}
are often the most effective.
\begin{algorithmX}
   \label{alg:newton}
{\em (Damped-inexact-Newton)}
\begin{itemize}
 \item Let an initial approximation $u$ be given.
 \item While ($|\langle F(u),v \rangle| > \epsilon$ for any $v$) do:
    \begin{enumerate}
    \item Find $w$ such that $\langle DF(u)w,v \rangle = - \langle F(u),v \rangle + r, ~\forall~ v$.
    \item Set $u = u + \lambda w$.
    \end{enumerate}
 \item End While.
\end{itemize}
\end{algorithmX}
The bilinear form $\langle DF(u)w,v \rangle$ which appears
in Algorithm~\ref{alg:newton} is simply
the (Gateaux) linearization of the nonlinear form $\langle F(u),v \rangle$,
defined formally as:
$$
\langle DF(u)w,v \rangle=\left.\frac{d}{d\epsilon}
    \langle F(u+\epsilon w),v \rangle \right|_{\epsilon=0}.
$$
This form is easily computed from most nonlinear forms
$\langle F(u),v \rangle$ which
arise from second order nonlinear elliptic problems, although the
calculation can be tedious in some cases (the example we consider
later in the paper is in this category).
   The possibly nonzero ``residual'' term $r$ is to allow for inexactness
in the linearization solve for efficiency, which is quite effective in many
cases (cf.~\cite{BaRo82,DES82,EiWa92}).
The parameter $\lambda$ brings robustness to the 
algorithm~\cite{EiWa92,BaRo80,BaRo81}.
If folds or bifurcations are present, then the iteration is modified 
to incorporate path-following~\cite{Kell87,BaMi89}.

As was the case for the Petrov-Galerkin discretized nonlinear residual
$\langle F(\cdot),\cdot \rangle$, the matrix
representing the bilinear form in the Newton iteration is easily assembled,
regardless of the complexity of the bilinear form 
$\langle DF(\cdot)\cdot,\cdot \rangle$.
In particular, the matrix equation for $w = \sum_{j=1}^n \beta_j \phi_j$
has the form:
$$
A U = F,
\ \ \ \ \ 
U_i = \beta_i,
$$
where
$$
A_{ij} = \langle DF( \bar{u}_h + \sum_{k=1}^n \alpha_k \phi_k )\phi_j,
         \psi_i \rangle,
\ \ \ \ \ 
F_i = \langle F( \bar{u}_h + \sum_{j=1}^n \alpha_j \phi_j),\psi_i \rangle.
$$
As long as the integral-based forms
$\langle F(\cdot),\cdot \rangle$ and $\langle DF(\cdot)\cdot,\cdot \rangle$
can be evaluated at individual points in the domain, then quadrature can
be used to build the Newton equations, regardless of the complexity of
the forms.
This is one of the most powerful features of the finite element method.
It should be noted that there is a subtle difference between the approach
outlined here (typical for a nonlinear finite element approximation) and
that usually taken when applying a Newton-iteration to a nonlinear finite
difference approximation.
In particular, in the finite difference setting the discrete equations
are linearized explicitly by computing the jacobian of the system of 
nonlinear algebraic equations.
In the finite element setting, the commutativity of linearization and
discretization is exploited; the Newton iteration is actually
performed in function space, with discretization occurring
``at the last moment'' in Algorithm~\ref{alg:newton} above.

   It can be shown that the Newton iteration above
is dominated by the computational 
complexity of solving the $n$ linear algebraic equations 
in each iteration (cf.~\cite{BaRo82,Hack85}).
   Multilevel methods are the only known provably optimal or nearly optimal
methods for solving these types of linear algebraic equations resulting
from discretizations of a large class of general linear elliptic 
problems~\cite{Hack85,BaDu81,Xu92a}.
   Unfortunately, the need to accurately represent complicated
PDE coefficient, domain features, and domain boundaries with an adapted
mesh requires the use of very fine mesh simply to describe the complexities
of the problem, which often precludes
the simple solve-estimate-refine approach in Algorithm~\ref{alg:adapt}.
   In Section~\ref{sec:mc_solvers} we describe the algebraic multilevel
approach we take in the \MC\ implementation to adress this,
similar to that taken in~\cite{CSZ94,CGZ97,RuSt87,VMB94}.

%%%%%%%%%%%%%%%%%%%%%%%%%%%%%%%%%%%%%%%%%%
\subsection{Residual-based {\em a posteriori} error indicators}
  \label{sec:indicators}

There are several approaches to adaptive error control, although the
approaches based on {\em a posteriori} error estimation are usually the
most effective and most general.
While most existing work on {\em a posteriori} estimates has been for linear 
problems, extensions to the nonlinear case can be made through linearization.
For example, consider the nonlinear problem in~(\ref{eqn:parm}), which
we will write as follows (ignoring the parameters for simplicity):
\begin{equation}
    \label{eqn:abstract_strong}
F(u) = 0, ~~~ F \in C^1(\calg{B}_1,\calg{B}_2^*),
~~~ \calg{B}_1, \calg{B}_2 ~\text{Banach~spaces},
\end{equation}
and a discretization:
\begin{equation}
    \label{eqn:abstract_strong_discrete}
F_h(u_h) = 0, ~~~ F_h \in C^0(U_h,V_h^*),
~~~
U_h \subset \calg{B}_1, 
~~~
V_h \subset \calg{B}_2.
\end{equation}
The nonlinear residual $F(u_h)$ can be used to estimate the error
$\|u-u_h\|_{\calg{B}_1}$, through the use of a
{\em linearization theorem}~\cite{LiRh94,Verf94}.
An example of such a theorem due to Verf\"urth is the following.
\begin{theorem} \cite{Verf94}
   \label{thm:verf}
Let $u\in X$ be a regular solution of $F(u)=0$, 
so that the Gateaux derivative $DF(u)$ is a linear
homeomorphism of $\calg{B}_1$ onto $\calg{B}_2^*$.
Assume $DF$ is Lipschitz continuous at $u$, so that there exists $R_0$ such 
that
$$
\gamma = \sup_{u_h \in B(u,R_0)} \frac{
   \| DF(u) - DF(u_h) \|_{\calg{L}(\calg{B}_1,\calg{B}_2^*)}}
            {\|u-u_h\|_{\calg{B}_1}} < \infty.
$$
Let $R=\min \{R_0, \gamma^{-1} \|DF(u)^{-1}\|_{\calg{L}(\calg{B}_2^*,\calg{B}_1)}, 2 \gamma^{-1} \|DF(u)\|_{\calg{L}(\calg{B}_1,\calg{B}_2^*)} \}$.
Then for all $u_h \in B(u,R)$,
\begin{equation}
   \label{eqn:verf}
   C_1 \|F(u_h)\|_{\calg{B}_2^*}
   \le \| u - u_h \|_{\calg{B}_1}
   \le C_2 \|F(u_h)\|_{\calg{B}_2^*},
\end{equation}
where $C_1=\frac{1}{2}\|DF(u)\|^{-1}_{\calg{L}(\calg{B}_1,\calg{B}_2^*)}$
and $C_2=2 \| DF(u)^{-1} \|_{\calg{L}(\calg{B}_2^*,\calg{B}_1)}$.
\end{theorem}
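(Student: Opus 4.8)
The plan is to read this statement off as a quantitative inverse‑function estimate: express the nonlinear residual $F(u_h)$ as the invertible operator $DF(u)$ applied to the error $u-u_h$, plus a quadratically small correction, and then obtain both inequalities by absorbing that correction. First I would use $F\in C^1(\calg{B}_1,\calg{B}_2^*)$ together with $F(u)=0$ and the fundamental theorem of calculus along the line segment from $u$ to $u_h$ to write
\[
 F(u_h)=F(u_h)-F(u)=\int_0^1 DF\bigl(u+t(u_h-u)\bigr)(u_h-u)\,dt = DF(u)(u_h-u)+\calg{R},
\]
where $\calg{R}=\int_0^1\bigl[DF(u+t(u_h-u))-DF(u)\bigr](u_h-u)\,dt\in\calg{B}_2^*$.

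Next I would bound the correction term $\calg{R}$. Since $u_h\in B(u,R)$ and $R\le R_0$, the entire segment $\{u+t(u_h-u):t\in[0,1]\}$ lies in $B(u,R_0)$, so the defining supremum for $\gamma$ applies at each point of the segment and yields $\|DF(u+t(u_h-u))-DF(u)\|_{\calg{L}(\calg{B}_1,\calg{B}_2^*)}\le\gamma\,t\,\|u_h-u\|_{\calg{B}_1}$; integrating in $t$ gives $\|\calg{R}\|_{\calg{B}_2^*}\le\tfrac{\gamma}{2}\|u_h-u\|_{\calg{B}_1}^2$.

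Then both bounds fall out by elementary manipulation. For the lower bound, the triangle inequality applied to the displayed identity gives $\|F(u_h)\|_{\calg{B}_2^*}\le\bigl(\|DF(u)\|_{\calg{L}(\calg{B}_1,\calg{B}_2^*)}+\tfrac{\gamma}{2}\|u_h-u\|_{\calg{B}_1}\bigr)\|u_h-u\|_{\calg{B}_1}$, and since $\|u_h-u\|_{\calg{B}_1}<R$ is small enough that $\tfrac{\gamma}{2}\|u_h-u\|_{\calg{B}_1}\le\|DF(u)\|_{\calg{L}(\calg{B}_1,\calg{B}_2^*)}$, this collapses to $\|F(u_h)\|_{\calg{B}_2^*}\le 2\|DF(u)\|_{\calg{L}(\calg{B}_1,\calg{B}_2^*)}\|u_h-u\|_{\calg{B}_1}$, i.e.\ $C_1\|F(u_h)\|_{\calg{B}_2^*}\le\|u-u_h\|_{\calg{B}_1}$. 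For the upper bound I would apply the bounded inverse $DF(u)^{-1}$ to the displayed identity, obtaining $u_h-u=DF(u)^{-1}F(u_h)-DF(u)^{-1}\calg{R}$, and hence $\|u_h-u\|_{\calg{B}_1}\le\|DF(u)^{-1}\|_{\calg{L}(\calg{B}_2^*,\calg{B}_1)}\|F(u_h)\|_{\calg{B}_2^*}+\tfrac{\gamma}{2}\|DF(u)^{-1}\|_{\calg{L}(\calg{B}_2^*,\calg{B}_1)}\|u_h-u\|_{\calg{B}_1}^2$; since $R$ is small enough that $\tfrac{\gamma}{2}\|DF(u)^{-1}\|_{\calg{L}(\calg{B}_2^*,\calg{B}_1)}\|u_h-u\|_{\calg{B}_1}\le\tfrac12$, the quadratic term is absorbed into the left‑hand side and $\|u_h-u\|_{\calg{B}_1}\le 2\|DF(u)^{-1}\|_{\calg{L}(\calg{B}_2^*,\calg{B}_1)}\|F(u_h)\|_{\calg{B}_2^*}=C_2\|F(u_h)\|_{\calg{B}_2^*}$.

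The main obstacle is not any individual estimate but the bookkeeping: one must verify that the single radius $R$ does everything at once — it must keep the whole segment inside $B(u,R_0)$ so the $\gamma$‑bound is available, and it must be simultaneously small enough to license \emph{both} absorption steps above — which is precisely why $R$ is taken as the minimum of the three listed quantities, and checking each of these three constraints is the part one would spell out carefully. A secondary point to watch is the validity of the fundamental‑theorem‑of‑calculus step: under the stated $C^1$ hypothesis the map $t\mapsto DF(u+t(u_h-u))(u_h-u)$ is a continuous $\calg{B}_2^*$‑valued function on $[0,1]$, so the integral is well defined and the identity is legitimate; with only Gateaux differentiability one would instead argue via difference quotients, but that refinement is not needed under the hypotheses as stated.
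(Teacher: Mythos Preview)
The paper does not actually prove this theorem; its proof is simply ``See~[Verf94].'' Your argument is precisely the standard one from Verf\"urth's original paper: Taylor expansion along the segment, a quadratic remainder bound from the Lipschitz hypothesis on $DF$, and two absorption steps. So there is nothing to compare at the level of strategy.

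One bookkeeping point deserves attention, since you flagged the radius constraints as the place to be careful. Your upper-bound absorption requires $\tfrac{\gamma}{2}\|DF(u)^{-1}\|_{\calg{L}(\calg{B}_2^*,\calg{B}_1)}\,\|u_h-u\|_{\calg{B}_1}\le\tfrac12$, i.e.\ $\|u_h-u\|_{\calg{B}_1}\le\gamma^{-1}\|DF(u)^{-1}\|_{\calg{L}(\calg{B}_2^*,\calg{B}_1)}^{-1}$. The theorem as transcribed here lists the second entry of $R$ as $\gamma^{-1}\|DF(u)^{-1}\|_{\calg{L}(\calg{B}_2^*,\calg{B}_1)}$, without the final inverse; that does not deliver the inequality you need. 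This is a typo in the present paper's statement (Verf\"urth's original has the inverse), not a flaw in your reasoning, but when you ``spell out carefully'' the three constraints you should note the discrepancy rather than assert that the stated $R$ suffices.
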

\begin{proof}
See~\cite{Verf94}.\qed
\end{proof}
The effect of linearization is swept under the rug somewhat by
the choice of $R$ sufficiently small, where $R$ is the radius of an open
ball in $\calg{B}_1$ about $u$, denoted as $B(u,R)$ in the theorem above.
One then ignores the
factors in~(\ref{eqn:verf}) involving the linearization $DF(u)$ and
its inverse, and focuses on two-sided estimates for the nonlinear residual
$\| F(u_h) \|_{\calg{B}_2^*}$ appearing on each side of~(\ref{eqn:verf}).
Since one typically constructs highly refined meshes where needed,
such local linearized estimates are thought to reasonable, although
much evidence to the contrary has been assembled by the dual-problem
error indicator community (see the discussion later in this section).
Note that $\|F(u_h)\|_{\calg{B}_2^*}$ can be estimated in 
different ways, including
\begin{enumerate}
\item Approximation by $\|F_h(u_h)\|_{\calg{B}_2^*}$ (residual estimates)~\cite{Verf94,Verf96,LiRh94,LiRh96},
\item Solution of local Neumann (or Dirichlet) problems~\cite{BaSm93,BaWe85}.
\end{enumerate}
The approaches can be shown to be essentially equivalent (up to constants;
cf.~\cite{Verf94,BaSi95}).
For reasons of efficiency, estimation by strong residuals is often used
rather than the solution of local problems in the case of elliptic systems
and/or in the setting of three-dimensional problems.
In particular, one employs the linearization theorem above, together with
some derived (and computable) upper and lower bounds on the nonlinear residual
$\|F(u_h)\|_{\calg{B}_2^*}$ given by the following pair of inequalities:
$$
C_3 \le \|F(u_h)\|_{\calg{B}_2^*} \le C_4.
$$
While it is clear that the upper bound $C_4$ is the key to bounding the
error, the lower bound $C_3$ can also be quite useful; it can help to ensure
that the adaptive procedure doesn't do too much work by over-refining an
area where it is unnecessary.
The effectiveness of an adaptive finite element code can
hinge on the implementation details of the estimator,
and implementing it efficiently can be quite an
art form (cf.~\cite{Verf94,BaSm93,BaWe85}).

We now consider the first two of these approaches in more detail.
First, we derive a strong residual-based {\em a posteriori} error indicator
for general Petrov-Galerkin approximations~(\ref{eqn:galerkin}) to the
solutions of general nonlinear elliptic systems of tensors of the
form~(\ref{eqn:str1})--(\ref{eqn:str3}).
The analysis involves primarily the weak
formulation~(\ref{eqn:weak})--(\ref{eqn:weakForm}).
Our derivation follows closely that of Verf\"{u}rth~\cite{Verf94,Verf96}
in the flat, Cartesian case.
In the next section, we will consider the second alternative, namely
an indicator based on a duality approach.

It should be noted that while the discussions throughout the paper are
generally valid for domains which are connected compact Riemannian manifolds
with Lipschitz continuous boundaries, several of the results we will
need to employ here have only been shown to hold in the case of bounded
$2$- and $3$-manifolds with smooth boundaries, with an atlas consisting of
only one chart (i.e., bounded open subsets of $\bbbb{R}^2$ and $\bbbb{R}^3$).
Examples are convex polyhedra in $\bbbb{R}^d$, which automatically
satisfy the Lipschitz continuity assumption.
The extensions of some of these results from open sets to Riemannian manifolds 
are not immediate; a number of subtle issues arise when manifold domains are
considered in conjunction with Sobolev spaces, the subject of two recent
monographs~\cite{Aubi82,Hebe91}.
Some of the difficulties encountered impact
approximation theory on manifolds~\cite{Adam78,Aubi82,Rose97,Schw91,Hebe91}.
However, if a sufficient amount of the function space framework is in place, 
and if the underlying manifold admits a partition of unity 
(e.g., if it is paracompact), then it should be possible to extend
finite element approximation theory by operating chartwise, 
and then globalizing the local results using the partition of unity.
We will assume here that such extensions are possible;
understanding the manifold case of multiple charts and other
complications is work in progress~\cite{Hols97c}.

The starting point for our residual-based error indicator is the 
linearization inequality~(\ref{eqn:verf}).
In our setting of the weak formulation~(\ref{eqn:weak})--(\ref{eqn:weakForm}),
we make the appropriate choice~(\ref{eqn:banach1})--(\ref{eqn:banach2}),
where we restrict our discussion
here to a single elliptic system for a scalar or a $d$-vector
(i.e., the product space has dimension $n_e=1$), which includes
the examples presented later in the paper.
The linearization inequality then involves standard Sobolev norms:
\begin{equation}
   \label{eqn:verfagain}
   C_1 \|F(u_h)\|_{W^{-1,q}(\calg{M})}
 \le \| u - u_h \|_{W^{1,r}(\calg{M})} \le
   C_2 \|F(u_h)\|_{W^{-1,q}(\calg{M})},
\end{equation}
for $1/p + 1/q = 1$, $r \ge \text{min}\{ p, q \}$,
where $W^{-1,q}(\calg{M}) = (W^{1,q}(\calg{M}))^*$ denotes the dual
space of bounded linear functionals on $W^{1,q}(\calg{M})$.
The norm of the nonlinear residual $F(\cdot)$ in the dual space of bounded
linear functionals on $W^{1,q}(\calg{M})$ is defined in the usual way:
\begin{equation}
  \label{eqn:dualnorm}
   \| F(u) \|_{W^{-1,q}(\calg{M})} = \sup_{ 0 \ne v \in W^{1,q}(\calg{M})}
      \frac{ | \langle F(u),v \rangle | }{ \|v\|_{W^{1,q}(\calg{M})} }.
\end{equation}
The numerator is the nonlinear weak form $\langle F(u),v \rangle$ appearing
in~(\ref{eqn:weakForm}).
(We will consider only the case of no parameters;
see~\cite{Verf94} for the case of parameters.)
In order to derive a bound on the weak form in the numerator
we must first introduce quite a bit of notation that
we have managed to avoid until now.

To begin, we assume that the $d$-manifold $\calg{M}$ has been exactly
triangulated with a set $\calg{S}$ of shape-regular $d$-simplices
(the finite dimension $d$ is arbitrary throughout this discussion).
A family of simplices will be referred to here as shape-regular if
for all simplices in the family the ratio of the diameter of the
circumscribing sphere to that of the inscribing sphere
is bounded by an absolute fixed constant, independent of the numbers and
sizes of the simplices that may be generated through refinements.
(For a more careful definition of shape-regularity and related concepts,
see~\cite{Ciar78}.)
It will be convenient to introduce the following notation:
\begin{center}
\begin{tabular}{lcl}
$\calg{S}$    & = & Set of shape-regular simplices
                    triangulating $\calg{M}$ \\
$\calg{N}(s)$ & = & Union of faces in simplex set
                    $s$ lying on $\partial_N \calg{M}$ \\
$\calg{I}(s)$ & = & Union of faces in simplex set
                    $s$ not in $\calg{N}(s)$ \\
$\calg{F}(s)$ & = & $\calg{N}(s) \cup \calg{I}(s)$ \\
$\omega_s$    & = & $~\bigcup~ \{~ \tilde{s} \in \calg{S} ~|~
                         s \bigcap \tilde{s} \ne \emptyset,
                     ~\text{where}~s \in \calg{S} ~\}$ \\
$\omega_f$    & = & $~\bigcup~ \{~ \tilde{s} \in \calg{S} ~|~
                     f \bigcap \tilde{s} \ne \emptyset,
                     ~\text{where}~f \in \calg{F} ~\}$ \\
$h_s$         & = & Diameter (inscribing sphere) of the simplex $s$ \\
$h_f$         & = & Diameter (inscribing sphere) of the face $f$.
\end{tabular}
\end{center}
When the argument to one of the face set
functions $\calg{N}$, $\calg{I}$, or $\calg{F}$ is in fact the entire
set of simplices $\calg{S}$, we will leave off the explicit dependence on
$\calg{S}$ without danger of confusion.
Referring forward briefly to Figure~\ref{fig:river} will be convenient.
The two darkened triangles in the left picture in Figure~\ref{fig:river}
represents the set $w_f$ for the face $f$ shared
by the two triangles.
The clear triangles in the right picture in Figure~\ref{fig:river}
represents the set $w_s$ for the darkened triangle $s$ in the center
(the set $w_s$ also includes the darkened triangle).

Finally, we will also need some notation to represent
discontinuous jumps in function values across faces interior to the
triangulation.
To begin, for any face $f \in \calg{N}$, let $n_f$ denote the
unit outward normal;
for any face $f \in \calg{I}$, take $n_f$ to be an arbitrary
(but fixed) choice of one of the two possible face normal orientations.
Now, for any $v \in L^2(\calg{M})$ such that
$v \in C^0(s) ~\forall s \in \calg{S}$, define the {\em jump function}:
$$
[v]_f(x) = \lim_{\epsilon \rightarrow 0^+} v(x+\epsilon n_f)
         - \lim_{\epsilon \rightarrow 0^-} v(x-\epsilon n_f).
$$

We now begin the analysis by splitting the volume and surface integrals
in~(\ref{eqn:weakForm})
into sums of integrals over the individual elements and faces, and we then
employ the divergence theorem~(\ref{eqn:divergence}) to work
backward towards the strong form in each element:
\begin{eqnarray*}
  \langle F(u),v \rangle
    &=& \int_{\calg{M}} \calg{G}_{ij} (A^{ia} v^j_{~;a} + B^i v^j)~dx
           + \int_{\partial_N \calg{M}} \calg{G}_{ij} C^i v^j ~ds
\\
    &=& \sum_{s \in \calg{S}}
         \int_{s} \calg{G}_{ij} (A^{ia} v^j_{~;a} + B^i v^j)~dx
    + \sum_{f \in \calg{N}}
         \int_{f} \calg{G}_{ij} C^i v^j ~ds
\nonumber \\
    &=& \sum_{s \in \calg{S}}
         \int_{s} \calg{G}_{ij} (B^i - A^{ia}_{~~;a}) v^j~dx
    + \sum_{s \in \calg{S}}
         \int_{\partial s} \calg{G}_{ij} A^{ia} n_a v^j~ds
\nonumber \\
    & & \quad
    + \sum_{f \in \calg{N}}
         \int_{f} \calg{G}_{ij} C^i v^j ~ds.
\nonumber
\end{eqnarray*}
Using the fact that~(\ref{eqn:galerkin}) holds for the solution to the
discrete problem, we employ the jump function and write
\begin{eqnarray}
\langle F(u_h),v \rangle 
    &=& \langle F(u_h),v-v_h \rangle
\label{eqn:estim5} \\
    &=& \sum_{s \in \calg{S}}
         \int_{s} \calg{G}_{ij} (B^i - A^{ia}_{~~;a}) (v^j - v^j_h)~dx
\nonumber \\
    & & + \sum_{s \in \calg{S}}
         \int_{\partial s} \calg{G}_{ij} A^{ia} n_a (v^j - v^j_h)~ds
\nonumber \\
    & & + \sum_{f \in \calg{N}}
         \int_{f} \calg{G}_{ij} C^i (v^j-v^j_h) ~ds
\nonumber \\
    &=& \sum_{s \in \calg{S}}
         \int_{s} \calg{G}_{ij} (B^i - A^{ia}_{~~;a}) (v^j - v^j_h)~dx
\nonumber \\
    & & + \sum_{f \in \calg{I}}
         \int_{f} \calg{G}_{ij} \left[ A^{ia} n_a \right]_f (v^j - v^j_h)~ds
\nonumber \\
    & & \quad + \sum_{f \in \calg{N}}
         \int_{f} \calg{G}_{ij} (C^i + A^{ia} n_a) (v^j-v^j_h) ~ds
\nonumber \\
  &\le& \sum_{s \in \calg{S}} \left(
         \| B^i - A^{ia}_{~~;a} \|_{L^p(s)}
         \| v^j - v^j_h \|_{L^q(s)}
      \right)
\nonumber \\
    & & + \sum_{f \in \calg{I}} \left(
         \| \left[ A^{ia} n_a \right]_f \|_{L^p(f)}
         \| v^j - v^j_h \|_{L^q(f)}
      \right)
\nonumber \\
   & & \quad + \sum_{f \in \calg{N}} \left(
         \| C^i + A^{ia} n_a \|_{L^p(f)}
         \| v^j-v^j_h \|_{L^q(f)}
      \right),
\nonumber 
\end{eqnarray}
where we have applied the H\"older inequality~(\ref{eqn:holder})
three times with $1/p + 1/q = 1$.

In order to bound the sums on the right, we will employ a standard
tool known as a $W^{1,p}$-quasi-interpolant $I_h$.
An example of such an interpolant is due to Scott and Zhang~\cite{ScZh90},
which we refer to as the SZ-interpolant (see also Cl\'{e}ment's interpolant
in~\cite{Clem75}).
Unlike point-wise polynomial interpolation, which is not well-defined for
functions in $W^{1,p}(\calg{M})$ when the embedding
$W^{1,p}(\calg{M}) \hookrightarrow C^0(\calg{M})$ fails,
the SZ-interpolant $I_h$ can be constructed quite generally
for $W^{1,p}$-functions on shape-regular meshes of $2$- and $3$-simplices.
Moreover, it can be shown to have the following remarkable
local approximation properties:
For all $v \in W^{1,q}(\calg{M})$, it holds that
\begin{eqnarray}
\| v - I_h v \|_{L^q(s)} &\le C_s h_s \| v \|_{W^{1,q}(\omega_s)},
\label{eqn:sz_approx} \\
\| v - I_h v \|_{L^q(f)} &\le C_f h_f^{1-1/q} \| v \|_{W^{1,q}(\omega_f)}.
\label{eqn:sz_approx_bnd}
\end{eqnarray}
For the construction of the SZ-interpolant, and for a proof of the
approximation inequalities in $L^p$-spaces for $p \ne 2$, see~\cite{ScZh90}.
A simple construction and the proof of the first inequality can also be
found in the appendix of~\cite{HoTi96b}.

Employing now the SZ-interpolant by taking $v_h = I_h v$ in~(\ref{eqn:estim5}),
using~(\ref{eqn:sz_approx})--(\ref{eqn:sz_approx_bnd}),
and noting that $1-1/q = 1/p$, we have
\begin{eqnarray}
\langle F(u_h),v \rangle
&\le& \sum_{s \in \calg{S}} C_s h_s
         \| B^i - A^{ia}_{~~;a} \|_{L^p(s)}
         \| v^j \|_{W^{1,q}(\omega_s)}
\nonumber \\
& & + \sum_{f \in \calg{I}} C_f h_f^{1/p}
         \| \left[ A^{ia} n_a \right]_f \|_{L^p(f)}
         \| v^j \|_{W^{1,q}(\omega_f)}
\nonumber \\
& & + \sum_{f \in \calg{N}} C_f h_f^{1/p}
         \| C^i + A^{ia} n_a \|_{L^p(f)}
         \| v^j \|_{W^{1,q}(\omega_f)}
\nonumber \\
&\le& 
    \left( \sum_{s \in \calg{S}} C_s^p h_s^p
             \| B^i - A^{ia}_{~~;a} \|_{L^p(s)}^p 
    \right.
\label{eqn:residual_sz} \\
& &
 \hspace*{-2.0cm}
    \left.
         + \sum_{f \in \calg{I}} C_f^p h_f
             \| \left[ A^{ia} n_a \right]_f \|_{L^p(f)}^p
         + \sum_{f \in \calg{N}} C_f^p h_f
             \| C^i + A^{ia} n_a \|_{L^p(f)}^p
    \right)^{1/p}
\nonumber \\
& &
 \hspace*{-2.0cm}
\cdot
    \left( \sum_{s \in \calg{S}} \| v^j \|_{W^{1,q}(\omega_s)}^q
         + \sum_{f \in \calg{I}} \| v^j \|_{W^{1,q}(\omega_f)}^q
         + \sum_{f \in \calg{N}} \| v^j \|_{W^{1,q}(\omega_f)}^q
    \right)^{1/q}
\nonumber
\end{eqnarray}
where we have used the discrete H\"{o}lder inequality 
to obtain the last inequality.

It is not difficult to show (cf.~\cite{Verf94}) that the simplex
shape regularity assumption bounds the number of possible overlaps of the
sets $\omega_s$ with each other, and also bounds the number of possible
overlaps of the sets $\omega_f$ with each other.
This makes it possible to
establish the following two inequalities:
\begin{eqnarray}
           \sum_{s \in \calg{S}} \| v^j \|_{W^{1,q}(\omega_s)}^q
    &\le D_s \| v \|_{W^{1,q}(\calg{M})}^q,
\label{eqn:loc1} \\
           \sum_{f \in \calg{F}} \| v^j \|_{W^{1,q}(\omega_f)}^q
    &\le D_f \| v \|_{W^{1,q}(\calg{M})}^q,
\label{eqn:loc2}
\end{eqnarray}
where $D_s$ and $D_f$ depend on the shape regularity constants reflecting
these overlap bounds.
Therefore, since $\calg{I} \subset \calg{F}$ and $\calg{N} \subset \calg{F}$,
we employ~(\ref{eqn:loc1})--(\ref{eqn:loc2}) in~(\ref{eqn:residual_sz})
which gives
\begin{eqnarray}
   \label{eqn:residual_almost}
\langle F(u_h),v \rangle
&\le& C_5 \| v \|_{W^{1,q}(\calg{M})}
    \cdot
    \left(
         \sum_{s \in \calg{S}} h_s^p
             \| B^i - A^{ia}_{~~;a} \|_{L^p(s)}^p
    \right.
\\
&+&
\hspace*{-0.1cm}
 \sum_{f \in \calg{I}} h_f
             \| \left[ A^{ia} n_a \right]_f \|_{L^p(f)}^p
    \left.
       + \sum_{f \in \calg{N}} h_f
             \| C^i + A^{ia} n_a \|_{L^p(f)}^p
    \right)^{1/p},
\nonumber
\end{eqnarray}
where $C_5 = \max_{\calg{S},\calg{F}} \{ C_s, C_f \}
 \cdot \max_{\calg{S},\calg{F}} \{ D_s^{1/q}, D_f^{1/q} \}$
depends on the shape regularity of the simplices in $\calg{S}$.

We finally now use~(\ref{eqn:residual_almost}) in~(\ref{eqn:dualnorm})
to achieve the upper bound in~(\ref{eqn:verfagain}):
\begin{eqnarray}
 \| u - u_h \|_{W^{1,r}(\calg{M})} 
&\le& C_2 \|F(u_h)\|_{W^{-1,q}(\calg{M})}
\nonumber \\
&=& C_2 \sup_{ 0 \ne v \in W^{1,q}(\calg{M})}
      \frac{ | \langle F(u_h),v \rangle| }{ \|v\|_{W^{1,q}(\calg{M})} }
\nonumber \\
&\le& C_2 C_5
    \left(
         \sum_{s \in \calg{S}} h_s^p
             \| B^i - A^{ia}_{~~;a} \|_{L^p(s)}^p
    \right.
\label{eqn:residual_final} \\
& &    + \sum_{f \in \calg{I}} h_f
             \| \left[ A^{ia} n_a \right]_f \|_{L^p(f)}^p
\nonumber \\
& & \left.
       + \sum_{f \in \calg{N}} h_f
             \| C^i + A^{ia} n_a \|_{L^p(f)}^p
    \right)^{1/p}.
\nonumber
\end{eqnarray}
We will make one final transformation that will turn this into a sum of
element-wise error indicators that will be easier to work with in an
implementation.
We only need to account for the interior face integrals (which would
otherwise be counted twice) when we combine the sum over the faces
into the sum over the elements.
This leave us with the following
\begin{theorem}
Let $u \in W^{1,r}(\calg{M})$ be a regular solution
of~(\ref{eqn:str1})--(\ref{eqn:str3}), or equivalently
of~(\ref{eqn:weak})--(\ref{eqn:weakForm}),
where~(\ref{eqn:banach1})--(\ref{eqn:banach2}) holds.
Then under the same assumptions as in Theorem~\ref{thm:verf},
the following {\em a posteriori} error estimate holds for
a Petrov-Galerkin approximation $u_h$ satisfying~(\ref{eqn:galerkin}):
\begin{equation}
   \label{eqn:indicator_residual}
 \| u - u_h \|_{W^{1,r}(\calg{M})} \le
   C \left( \sum_{s \in \calg{S}} \eta_s^p \right)^{1/p},
\end{equation}
where
$$
C = 2 \cdot \max_{\calg{S},\calg{F}} \{ C_s, C_f \} 
      \cdot \max_{\calg{S},\calg{F}} \{ D_s^{1/q}, D_f^{1/q} \}
\cdot \| DF(u)^{-1} \|_{\calg{L}(W^{-1,q},W^{1,p})},
$$
and where the element-wise error indicator $\eta_s$ is defined as:
\begin{eqnarray}
\eta_s 
  &=& \left(
      h_s^p \| B^i - A^{ia}_{~~;a} \|_{L^p(s)}^p
  + \frac{1}{2} \sum_{f \in \calg{I}(s)} h_f
     \| \left[ A^{ia} n_a \right]_f \|_{L^p(f)}^p
   \right.
\label{eqn:estimator} \\
& & \left.
   + \sum_{f \in \calg{N}(s)} h_f
     \| C^i + A^{ia} n_a \|_{L^p(f)}^p
   \right)^{1/p}.
\nonumber
\end{eqnarray}
\end{theorem}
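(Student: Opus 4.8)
The plan is to recognize that essentially all of the analytic work has already been carried out in the derivation culminating in~\eqref{eqn:residual_final}; what remains for the proof of this theorem is a purely combinatorial reorganization of the face sums into element-indexed sums, together with a collection of constants. I would therefore take~\eqref{eqn:residual_final} as the starting point, namely the bound
\[
\| u - u_h \|_{W^{1,r}(\calg{M})}
\le C_2 C_5 \left( \sum_{s \in \calg{S}} h_s^p \| B^i - A^{ia}_{~~;a} \|_{L^p(s)}^p
+ \sum_{f \in \calg{I}} h_f \| [A^{ia} n_a]_f \|_{L^p(f)}^p
+ \sum_{f \in \calg{N}} h_f \| C^i + A^{ia} n_a \|_{L^p(f)}^p \right)^{1/p},
\]
where $C_2 = 2\|DF(u)^{-1}\|_{\calg{L}(W^{-1,q},W^{1,r})}$ comes from Theorem~\ref{thm:verf} (via the linearization inequality~\eqref{eqn:verfagain}, which is available ``under the same assumptions'' on $u$ and $u_h$), and $C_5 = \max_{\calg{S},\calg{F}}\{C_s,C_f\}\cdot\max_{\calg{S},\calg{F}}\{D_s^{1/q},D_f^{1/q}\}$ is the shape-regularity constant produced by the SZ-interpolant estimates~\eqref{eqn:sz_approx}--\eqref{eqn:sz_approx_bnd} together with the overlap bounds~\eqref{eqn:loc1}--\eqref{eqn:loc2}.

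The key step is then to exchange the order of summation in the face terms. Each interior face $f \in \calg{I}$ is a common face of exactly two simplices of $\calg{S}$, so $\sum_{f \in \calg{I}} (\cdots) = \tfrac12 \sum_{s \in \calg{S}} \sum_{f \in \calg{I}(s)} (\cdots)$; each Neumann face $f \in \calg{N}$ belongs to exactly one simplex, so $\sum_{f \in \calg{N}} (\cdots) = \sum_{s \in \calg{S}} \sum_{f \in \calg{N}(s)} (\cdots)$. Substituting these identities into the displayed bound and grouping the three contributions attached to a fixed simplex $s$ produces exactly $\sum_{s \in \calg{S}} \eta_s^p$ with $\eta_s$ as defined in~\eqref{eqn:estimator} — in particular the relabeling is an exact identity, so nothing is lost. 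Finally I would set $C = C_2 C_5 = 2 \cdot \max_{\calg{S},\calg{F}}\{C_s,C_f\}\cdot\max_{\calg{S},\calg{F}}\{D_s^{1/q},D_f^{1/q}\}\cdot\|DF(u)^{-1}\|$, which is the constant claimed in the statement, yielding~\eqref{eqn:indicator_residual}.

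As for the main obstacle: there is no remaining analytic difficulty, since the linearization theorem, the quasi-interpolation estimates, and the shape-regularity overlap counting were all invoked upstream. The only point requiring genuine care is the bookkeeping of the factor $\tfrac12$: one must verify that every face of the triangulation is either interior (shared by two simplices, hence halved when split across the two local sums) or Neumann-boundary (counted once), while Dirichlet boundary faces contribute nothing because the test functions vanish on $\partial_0 \calg{M}$ and therefore never enter $\eta_s$. A secondary nuisance is the function-space index on the operator norm: Theorem~\ref{thm:verf} naturally delivers $\|DF(u)^{-1}\|_{\calg{L}(W^{-1,q},W^{1,r})}$ whereas the statement writes $W^{1,p}$; since $r \ge \min\{p,q\}$ this discrepancy is absorbed by the relevant Sobolev embedding and affects only the constant, not the structure of the estimate, so I would simply note it and carry $r$ (or $\min\{p,q\}$) through.
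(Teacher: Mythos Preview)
Your proposal is correct and follows exactly the paper's own approach: the paper's proof is the single line ``follows from~(\ref{eqn:residual_final}) and the discussion above,'' where that discussion is precisely the observation that interior faces are shared by two simplices (hence the factor $\tfrac12$) while Neumann faces belong to one. Your remark about the $W^{1,p}$ versus $W^{1,r}$ index on the operator norm is a fair catch of a minor notational slip in the statement, not a gap in the argument.
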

\begin{proof}
The proof follows from~(\ref{eqn:residual_final})
and the discussion above.\qed
\end{proof}
The element-wise error indicator in~(\ref{eqn:estimator}) provides an error
bound in the $W^{1,r}$-norm for a general covariant nonlinear elliptic
system of the form~(\ref{eqn:str1})--(\ref{eqn:str3}), with
$1/p+1/q=1$, $r \ge \text{min}\{p,q\}$, which may be
more appropriate than the $r=p=q=2$ case for some nonlinear problems.
Issues related to this topic are discussed in~\cite{Verf94}.
Following~\cite{Verf94}, it is possible to use a similar analysis
to construct lower bounds, dual to~(\ref{eqn:indicator_residual}),
of the form
$$
   \tilde{C} \left( \sum_{s \in \calg{S}} \eta_s^p \right)^{1/p}
 \le \| u - u_h \|_{W^{1,r}(\calg{M})}.
$$
Such results are useful for
performing unrefinement and in accessing the quality of an error-indicator.

%%%%%%%%%%%%%%%%%%%%%%%%%%%%%%%%%%%%%%%%%%
\subsection{Duality-based {\em a posteriori} error indicators}
  \label{sec:dual_indicators}

We now derive an alternative {\em a posteriori} error indicator for general
Petrov-Galerkin approximations~(\ref{eqn:galerkin}) to the solutions of
general nonlinear elliptic systems of tensors of the
form~(\ref{eqn:str1})--(\ref{eqn:str3}).
The indicator is based on the solution of a global linearized adjoint
(or {\em dual}) problem; again the analysis involves primarily the weak
formulation~(\ref{eqn:weak})--(\ref{eqn:weakForm}), and follows
closely that of~\cite{BaSi95,EHM2001}.
This approach can be viewed as simply another
way to bound the nonlinear residual $\|F(u_h)\|_{\calg{B}_2^*}$ after
employing the (possibly quite crude) one-time linearization in
Theorem~\ref{thm:verf}.
However, the approach can be used to avoid the one-time linearization step,
bringing the stability properties of the differential operator into
the error indicator by updating the linearization as the solution is improved,
and by incorporating the linearization operator itself
into the error indicator.

As before, we are interested in the solution to the operator
equation~(\ref{eqn:abstract_strong})
and also in error estimates for approximations $u_h$
satisfying~(\ref{eqn:abstract_strong_discrete}).
We begin with the generalized Taylor remainder in integral form:
\begin{equation}
   \label{eqn:taylor}
F(u+h) = F(u) + \left\{ \int_0^1 DF(u+\xi h) d\xi \right\} h.
\end{equation}
Taking $h=u_h-u$, the error $e=u-u_h$ can be expressed as follows:
$$
R = -F(u_h) = -F(u+[u_h-u]) = -F(u) - A(u_h-u)
                            = 0 - Ae,
$$
where the linearization operator $A$ is defined from~(\ref{eqn:taylor}) as:
\begin{equation}
   \label{eqn:dual_operator}
A = \int_0^1 DF(u+\xi h) d\xi.
\end{equation}
If a linear functional of the error $l(e) = \langle e,\psi \rangle$
is of interest rather than the error itself, where $\psi$ is the
Riesz-representer of $l(\cdot)$, then we can exploit the linearization
operator $A$ in~(\ref{eqn:dual_operator}),
and its (unique) adjoint $A^T$, to produce
an error indicator:
$$
|\langle e,\psi \rangle | = |\langle e,A^T \phi \rangle |
   = | \langle Ae,\phi \rangle | = | \langle R,\phi \rangle |
   = | \langle F(u_h),\phi \rangle |.
$$
The indicator requires the solution of the linearized {\em dual problem}:
\begin{equation}
   \label{eqn:dual_problem}
A^T \phi = \psi
\end{equation}
for the residual weights $\phi$, where the data for the dual
problem $\psi$ is the Riesz-representer of the functional of interest.
Strong norm estimates of the form~(\ref{eqn:verfagain})
can be established using duality (cf.~\cite{BaSi95}), but
the operator information represented by the dual solution $\phi$ is then
lost (it appears in the constants).
If a functional of the error is of interest
(e.g., the error along a curve or surface in the domain),
then a more delicate
approach is to instead employ the dual solution $\phi$ as part
of the indicator:
\begin{equation}
   \label{eqn:dual_indicator}
| \langle e,\psi \rangle | = | \langle F(u_h),\phi \rangle |
     \le error~estimate.
\end{equation}
The dual solution $\phi$ obtained by solving~(\ref{eqn:dual_problem})
is used locally (element-wise) in~(\ref{eqn:dual_indicator}), with the
dual solution as residual weights (cf.~\cite{EHM2001}).

To construct such estimates for general Petrov-Galerkin
approximations~(\ref{eqn:galerkin}) to the solutions of general nonlinear
elliptic systems of tensors of the form~(\ref{eqn:str1})--(\ref{eqn:str3}),
we first need some simple identities to help identify the form of
the linearized dual problem:
$$
A^{ia}(u^k,u^k_{~;c}) - A^{ia}(U^k,U^k_{~;c})
~~~~~~~~~~~~~~~~~~~~~~~~~~~~~~~~~~~~~~~~~~~~~~~~~~~~~~~~~~~~~~~~~~~~
$$
\vspace*{-0.6cm}
\begin{eqnarray*}
~~
 &=& \int_0^1 \frac{d}{ds} A^{ia}(su^k+ (1-s)U^k,su^k_{~;c} + (1-s)U^k_{~;c}) ds
\nonumber \\
 &=& \int_0^1 \left\{
              D_1 A^{ia}(su^k+ (1-s)U^k,su^k_{~;c} + (1-s)U^k_{~;c})
      \right.
\nonumber \\
 & & ~~~~ \cdot \frac{d}{ds} [ su^k+ (1-s)U^k ]
\nonumber \\
 & & + D_2 A^{ia}(su^k+ (1-s)U^k,su^k_{~;c} + (1-s)U^k_{~;c})
\nonumber \\
 & & ~~~~ \cdot \left. \frac{d}{ds} [ su^k_{~;c} + (1-s)U^k_{~;c} ] \right\} ds
\nonumber \\
 &=& \left\{ \int_0^1
       D_1 A^{ia}(su^k+ (1-s)U^k,su^k_{~;c} + (1-s)U^k_{~;c}) 
    ds \right\} ( u^k - U^k )
\nonumber \\
 & & + \left\{ \int_0^1
        D_2 A^{ia}(su^k+ (1-s)U^k,su^k_{~;c} + (1-s)U^k_{~;c})
     ds \right\} ( u^k_{;c} - U^k_{;c} )
\nonumber \\
 &=& \calg{A}^{ia}_{~~b} e^b + \calg{A}^{ia~c}_{~~b} e^b_{~;c}.
\nonumber \\
\end{eqnarray*}
$$
B^i(u^k,u^k_{~;c}) - B^i(U^k,U^k_{~;c})
~~~~~~~~~~~~~~~~~~~~~~~~~~~~~~~~~~~~~~~~~~~~~~~~~~~~~~~~~~~~~~~~~~~~
$$
\vspace*{-0.6cm}
\begin{eqnarray*}
~~
 &=& \int_0^1 \frac{d}{ds} B^i(su^k+ (1-s)U^k,su^k_{~;c} + (1-s)U^k_{~;c}) ds
\nonumber \\
 &=& \int_0^1 \left\{
              D_1 B^i(su^k+ (1-s)U^k,su^k_{~;c} + (1-s)U^k_{~;c})
     \right.
\nonumber \\
 & & ~~~~ \cdot \frac{d}{ds} [ su^k+ (1-s)U^k ]
\nonumber \\
& & + D_2 B^i(su^k+ (1-s)U^k,su^k_{~;c} + (1-s)U^k_{~;c})
\nonumber \\
& & ~~~~ \cdot \left. \frac{d}{ds} [ su^k_{~;c} + (1-s)U^k_{~;c} ] \right\} ds
\nonumber \\
 &=& \left\{ \int_0^1
          D_1 B^i(su^k+ (1-s)U^k,su^k_{~;c} + (1-s)U^k_{~;c})
     ds \right\} ( u^k - U^k )
\nonumber \\
 & & 
   + \left\{ \int_0^1
            D_2 B^i(su^k+ (1-s)U^k,su^k_{~;c} + (1-s)U^k_{~;c})
     ds \right\} ( u^k_{;c} - U^k_{;c} )
\nonumber \\
 &=& \calg{B}^i_{~b} e^b + \calg{B}^{i~c}_{~b} e^b_{~;c}.
\end{eqnarray*}
Similarly,
$$
C^i(u^k) - C^i(U^k) = \left\{ \int_0^1
                           D_1 C^i(su^k+ (1-s)U^k)
                      ds \right\} ( u^k - U^k )
                    = \calg{C}^i_{~b} e^b.
$$
Therefore, given our original weak form in~(\ref{eqn:weakForm}), we have
\begin{eqnarray*}
\langle F(u) - F(U),\phi \rangle
   &=& \int_{\calg{M}} \calg{G}_{ij}
       \left\{ [A^{ia}(u^k,u^k_{~;c}) - [A^{ia}(U^k,U^k_{~;c})] \phi^j_{~;a}
       \right.
\nonumber \\
& & \left. +~ [B^i(u^k,u^k_{~;c}) - B^i(U^k,U^k_{~;c}) ] \phi^j \right\} ~dx
\nonumber \\
& & + \int_{\partial_1 \calg{M}} \calg{G}_{ij}
           [ C^i(u^k) - C^i(U^k) ] \phi^j ~ds
\nonumber \\
&=& \int_{\calg{M}} \calg{G}_{ij} \left\{
     ( \calg{A}^{ia}_{~~b} e^b + \calg{A}^{ia~c}_{~~b} e^b_{~;c} ) \phi^j_{~;a}
    \right.
\nonumber \\
& &
    \left.
 +~ ( \calg{B}^i_{~b} e^b + \calg{B}^{i~c}_{~b} e^b_{~;c} ) \phi^j \right\}~dx
 + \int_{\partial_1 \calg{M}} \calg{G}_{ij} \calg{C}^i_{~b} e^b \phi^j ~ds
\nonumber \\
&=& \langle Ae,\phi \rangle 
\nonumber \\
&=&  \langle e,A^T\phi \rangle .
\nonumber
\end{eqnarray*}
The weak form of the linearized dual problem is then:
\begin{equation}
   \label{eqn:dual_weak}
\text{Find}~ \phi \in \calg{B}_1 ~\text{such~that}~
     \langle A^T \phi, v \rangle = \langle \psi, v \rangle,
     ~~~~ \forall v \in \calg{B}_2,
\end{equation}
where the adjoint form is
\begin{equation}
    \label{eqn:adjoint_form}
\langle A^T \phi, v \rangle
= \int_{\calg{M}} \calg{G}_{ij} \left\{
     \calg{A}^{ia~c}_{~~b} \phi^j_{~;a} v^b_{~;c}
   + \calg{A}^{ia}_{~~b} \phi^j_{~;a} v^b
   + \calg{B}^{i~c}_{~b} \phi^j v^b_{~;c}
   \right.
\end{equation}
$$
   \left.
   +~ \calg{B}^i_{~b} \phi^j v^b
     \right\}~dx
+ \int_{\partial_1 \calg{M}} \calg{G}_{ij} \calg{C}^i_{~b} \phi^j v^b ~ds.
$$
The strong form of the linearized dual problem
in~(\ref{eqn:dual_weak})--(\ref{eqn:adjoint_form}) is then:
\begin{eqnarray*}
  \calg{G}_{ij} \left\{
       \calg{A}^{ia}_{~~b}\phi^j_{~;a}
     - \left( \calg{A}^{ia~c}_{~~b}\phi^j_{~;a} \right)_{;c}
     + \calg{B}^i_{~b} \phi^j
     - \left( \calg{B}^{i~c}_{~b} \phi^j \right)_{;c}
  \right\} 
   &=&0 \text{~in~} \calg{M}, \\
  \calg{G}_{ij} \left\{
       \calg{A}^{ia~c}_{~~b} \phi^j_{~;a} n_c
     + \left( \calg{B}^{i~c}_{~b} n_c + \calg{C}^i_{~b} \right) \phi^j
  \right\} 
   &=&0 \text{~on~} \partial_1 \calg{M}, \\
       u^i(x^b)
   &=&0 \text{~on~} \partial_0 \calg{M}.
\end{eqnarray*}
This leads to the following error representation:
\begin{theorem}
Given a projector $P_h : \calg{B}_1 \mapsto U_h$ onto the finite element
subspace $U_h \subset \calg{B}_1$, the functional error is:
$$
\langle e, \psi \rangle = \langle \calg{R}(U), \phi \rangle,
$$
where
\begin{eqnarray}
   \label{eqn:estimator_duality}
\langle \calg{R}(U), v \rangle
&=&   \sum_{s \in \calg{S}}
         \int_{s} \calg{G}_{ij} (B^i - A^{ia}_{~~;a}) (v^j - P_h v^j)~dx
\\
& & + \sum_{f \in \calg{I}}
         \int_{f} \calg{G}_{ij} \left[ A^{ia} n_a \right]_f 
             (v^j - P_h v^j)~ds
\nonumber \\
& & + \sum_{f \in \calg{N}}
         \int_{f} \calg{G}_{ij} (C^i + A^{ia} n_a) (v^j- P_h v^j) ~ds.
\nonumber
\end{eqnarray}
\end{theorem}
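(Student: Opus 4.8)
The plan is to sidestep the crude one-time linearization of Theorem~\ref{thm:verf} and instead exploit the \emph{exact} linearization carried by the integral Taylor remainder~(\ref{eqn:taylor}). First I would record the identity essentially assembled in the discussion preceding the statement: writing $U=u_h$, $e=u-U$, and $A=\int_0^1 DF(u+\xi h)\,d\xi$ with $h=U-u=-e$ as in~(\ref{eqn:dual_operator}), formula~(\ref{eqn:taylor}) gives $F(U)=F(u)+Ah$, so that $F(U)=-Ae$ since $u$ is a regular solution with $F(u)=0$; hence $Ae=-F(U)$. Since $\phi$ solves the linearized dual problem~(\ref{eqn:dual_weak})--(\ref{eqn:adjoint_form}), i.e. $A^T\phi=\psi$, the chain $\langle e,\psi\rangle=\langle e,A^T\phi\rangle=\langle Ae,\phi\rangle=-\langle F(U),\phi\rangle$ holds. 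The substance of this step is exactly the computation already carried out above, which shows that the averaged coefficient tensors $\calg{A}^{ia}_{~~b},\ \calg{A}^{ia~c}_{~~b},\ \calg{B}^i_{~b},\ \calg{B}^{i~c}_{~b},\ \calg{C}^i_{~b}$ are the honest $s$-averages of the Gateaux derivatives of $A^{ia},B^i,C^i$, so that the form~(\ref{eqn:adjoint_form}) is genuinely the transpose of $A$ and the dual problem is the correct one.

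Second, I would bring in Galerkin orthogonality. The discrete equation~(\ref{eqn:galerkin}) says $\langle F(U),v\rangle=0$ for every $v\in V_h$; since $P_h:\calg{B}_1\to U_h$ and, in the Galerkin case $U_h=V_h$ (or under the subspace compatibility assumed here), $P_h\phi$ is an admissible test function, we have $\langle F(U),P_h\phi\rangle=0$. Subtracting this from the identity of the previous paragraph yields $\langle e,\psi\rangle=-\langle F(U),\phi-P_h\phi\rangle$.

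Third --- and this part is purely a localization --- I would expand $\langle F(U),\phi-P_h\phi\rangle$ exactly as in the derivation of~(\ref{eqn:estim5}): start from the weak form~(\ref{eqn:weakForm}), split the volume integral into a sum over the simplices $s\in\calg{S}$ and the Neumann integral into a sum over the faces $f\in\calg{N}$, and apply the divergence theorem~(\ref{eqn:divergence}) simplex by simplex to transfer the covariant derivative off $A^{ia}$. The inter-element face contributions recombine, using the fixed face orientations $n_f$ and the continuity of $\phi-P_h\phi$, into the jump terms $\left[A^{ia}n_a\right]_f$ over $f\in\calg{I}$; the boundary faces combine with the Neumann data into $C^i+A^{ia}n_a$ over $f\in\calg{N}$; and the element interiors leave the strong residual $B^i-A^{ia}_{~~;a}$. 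Pairing everything against $\phi^j-P_h\phi^j$ reproduces precisely $\langle\calg{R}(U),\phi\rangle$ as defined in~(\ref{eqn:estimator_duality}), the overall sign matching the convention $R=-F(U)$ adopted earlier in this section, which closes the argument.

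The step I expect to be the real obstacle is not the integration by parts --- that is bookkeeping transplanted almost verbatim from the residual-based section --- but the two structural facts underpinning the first two paragraphs: (i) that the $\xi$-averaged operator $A$ of~(\ref{eqn:dual_operator}) is a well-defined bounded map $\calg{B}_1\to\calg{B}_2^*$ whose adjoint is represented by~(\ref{eqn:adjoint_form}), so that the dual problem~(\ref{eqn:dual_weak}) is solvable and a dual weight $\phi$ actually exists; and (ii) that $P_h\phi$ may legitimately be inserted as a test function in~(\ref{eqn:galerkin}). Both lean on the growth and regularity hypotheses on $A^{ia},B^i,C^i$ that make $F\in C^1(\calg{B}_1,\calg{B}_2^*)$, together with the subspace compatibility $U_h\subseteq V_h$ (automatic when $U_h=V_h$); on a general manifold one also appeals to the chartwise-plus-partition-of-unity framework flagged earlier in the paper.
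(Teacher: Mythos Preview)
Your proposal is correct and follows essentially the same route as the paper's proof: identify $\langle e,\psi\rangle$ with $\langle F(U),\phi\rangle$ via the dual problem, invoke Galerkin orthogonality to replace $\phi$ by $\phi-P_h\phi$, then perform the elementwise integration by parts of~(\ref{eqn:estim5}) to localize into volume residuals, interior jumps, and Neumann terms. The only cosmetic difference is ordering (the paper integrates by parts before inserting $P_h\phi$, you do the reverse), and your explicit tracking of the sign via $R=-F(U)$ is more careful than the paper's somewhat loose handling of it.
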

\begin{proof}
We begin by working backward toward the strong form:
\begin{eqnarray*}
\langle e,\psi \rangle
&=& \langle F(U),\phi \rangle
\\
&=& \int_{\calg{M}} \calg{G}_{ij} (A^{ia} \phi^j_{~;a} + B^i \phi^j)~dx
           + \int_{\partial_N \calg{M}} \calg{G}_{ij} C^i \phi^j ~ds
\nonumber \\
&=& \sum_{s \in \calg{S}}
         \int_{s} \calg{G}_{ij} (A^{ia} \phi^j_{~;a} + B^i \phi^j)~dx
    + \sum_{f \in \calg{N}}
         \int_{f} \calg{G}_{ij} C^i \phi^j ~ds
\nonumber \\
&=& \sum_{s \in \calg{S}}
         \int_{s} \calg{G}_{ij} (B^i - A^{ia}_{~~;a}) \phi^j~dx
    + \sum_{s \in \calg{S}}
         \int_{\partial s} \calg{G}_{ij} A^{ia} n_a \phi^j~ds
\nonumber \\
& & + \sum_{f \in \calg{N}}
         \int_{f} \calg{G}_{ij} C^i \phi^j ~ds.
\nonumber
\end{eqnarray*}
Using then Galerkin orthogonality and a jump function gives:
\begin{eqnarray*}
\langle F(U),\phi \rangle
&=& \langle F(U),\phi-P_h \phi \rangle
\\
&=& \sum_{s \in \calg{S}}
         \int_{s} \calg{G}_{ij} (B^i - A^{ia}_{~~;a}) (\phi^j - P_h \phi^j)~dx
\nonumber \\
& & + \sum_{s \in \calg{S}}
         \int_{\partial s} \calg{G}_{ij} A^{ia} n_a (\phi^j - P_h \phi^j)~ds
\nonumber \\
& & + \sum_{f \in \calg{N}}
         \int_{f} \calg{G}_{ij} C^i (\phi^j- P_h \phi^j) ~ds
\nonumber \\
&=& \sum_{s \in \calg{S}}
         \int_{s} \calg{G}_{ij} (B^i - A^{ia}_{~~;a}) (\phi^j - P_h \phi^j)~dx
\nonumber \\
& & + \sum_{f \in \calg{I}}
         \int_{f} \calg{G}_{ij} \left[ A^{ia} n_a \right]_f 
             (\phi^j - P_h \phi^j)~ds
\nonumber \\
& & + \sum_{f \in \calg{N}}
         \int_{f} \calg{G}_{ij} (C^i + A^{ia} n_a) (\phi^j- P_h \phi^j) ~ds
\nonumber \\
&=& \langle \calg{R}(U),\phi \rangle.
\nonumber
\end{eqnarray*}
\qed
\vspace*{-0.2cm}
\end{proof}

The error representation can be used as an error indicator as
illustrated in Algorithm~\ref{alg:dual_indicator}.
\begin{algorithmX}
   \label{alg:dual_indicator}
{\em (Linearized dual error indicator)}
\begin{enumerate}
\item Decide which linear functional(s) of the error $l(e) = \langle e, \psi \rangle$ is of interest.
\item Pose and solve the linearized dual problem $A^T \phi = \psi$ for the dual weight function $\phi$.
\item Numerically approximate $\langle \calg{R}(U),\phi \rangle$ within each element as an element-wise error indicator.
\item Elements which fail an indicator test (using equi-distribution) are marked for refinement.
\end{enumerate}
\end{algorithmX}

%%%%%%%%%%%%%%%%%%%%%%%%%%%%%%%%%%%%%%%%%%%%%%%%%%%%%%%%%%%%%%%%%%%%%%%%%%%%%%
% _SECTION_6_
%%%%%%%%%%%%%%%%%%%%%%%%%%%%%%%%%%%%%%%%%%%%%%%%%%%%%%%%%%%%%%%%%%%%%%%%%%%%%%
\section[Manifold Code (MC): finite element methods on manifolds]
        {Manifold Code (MC): adaptive multilevel finite element methods
         on manifolds}
   \label{sec:mc}

\MC (see also~\cite{HBW99,BHW99,BaHo98a,HoBe99,BeHo99})
is an adaptive multilevel finite element software package, written in ANSI C,
which was developed by the author over several years at
Caltech and UC San Diego.
It is designed to produce highly accurate numerical solutions to
nonlinear covariant elliptic systems of tensor equations
on 2- and 3-manifolds in an optimal or nearly-optimal way.
\MC\ employs {\em a posteriori} error estimation, adaptive simplex subdivision, 
unstructured algebraic multilevel methods, global inexact Newton methods, 
and numerical continuation methods for the highly accurate numerical solution
of nonlinear covariant elliptic systems on (Riemannian) 2- and 3-manifolds.

%%%%%%%%%%%%%%%%%%%%%%%%%%%%%%%%%%%%%%%%%%%%%%%%%%%%%%%%%%%%%%%%%%%%
\subsection{The overall design of MC}

\MC\ is an implementation of Algorithm~\ref{alg:adapt},
where Algorithm~\ref{alg:newton} is employed for solving
nonlinear elliptic systems that
arise in Step~1 of Algorithm~\ref{alg:adapt}.
The linear Newton equations in each iteration of Algorithm~\ref{alg:newton}
are solved with algebraic multilevel methods, and the algorithm is
supplemented with a continuation technique when necessary.
Several of the features of \MC\ are somewhat unusual, allowing for the treatment
of very general nonlinear elliptic systems of tensor equations on domains
with the structure of 2- and 3-manifolds.
In particular, some of these features are:
\begin{itemize}
\item {\em Abstraction of the elliptic system}: The elliptic system 
     is defined only through a nonlinear weak form over the domain manifold,
     along with an associated linearization form, also defined everywhere on
     the domain manifold
     (precisely the forms $\langle F(u),v \rangle$
     and $\langle DF(u)w,v \rangle$ in the discussions above).
     To use the {\em a posteriori} error indicators, a third function
     $F(u)$ must also be provided (essentially the strong form of the problem).
\item {\em Abstraction of the domain manifold}: The domain manifold is
     specified by giving a polyhedral representation of the topology, along
     with an abstract set of coordinate labels of the user's interpretation,
     possibly consisting of multiple charts.
     \MC\ works only with the topology of the domain, the connectivity
     of the polyhedral representation.
     The geometry of the domain manifold is provided only through the form
     definitions, which contain the manifold metric information, and through
     a {\tt oneChart()} routine that the user provides to resolve chart
     boundaries.
\item {\em Dimension independence}: Exactly the same code paths in \MC\
     are taken for both two- and three-dimensional problems (as well as for
     higher-dimensional problems).
     To achieve this dimension independence, \MC\ employs the simplex as its 
     fundamental geometrical object for defining finite element bases.
\end{itemize}
As a consequence of the abstract weak form approach to defining the problem,
the complete definition of a complex nonlinear tensor system such as large
deformation nonlinear elasticity requires writing only a few hundred lines
of C to define the two weak forms, and to define the {\tt oneChart()} routine.
Changing to a different tensor system (e.g. the example later in the
paper involving the constraints in the Einstein equations)
involves providing only a different definition of the forms and a different
domain description.

%%%%%%%%%%%%%%%%%%%%%%%%%%%%%%%%%%%%%%%%%%%%%%%%%%%%%%%%%%%%%%%%%%%%
\subsection[Topology and geometry representation in MC]
           {Topology and geometry representation in MC:
            The Ringed Vertex}

A datastructure referred to as the {\em ringed-vertex} (cf.~\cite{Hols97c})
is used to represent meshes of $d$-simplices of arbitrary topology.
This datastructure is illustrated in Figure~\ref{fig:river}.
\begin{figure}
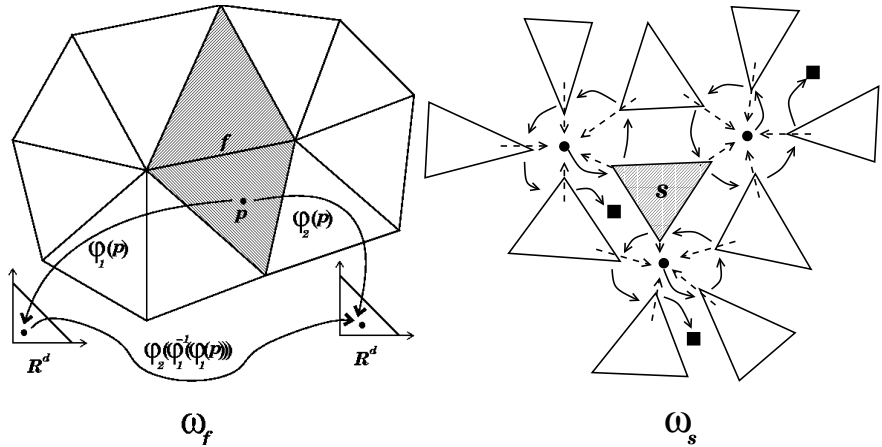

\begin{center}
\mbox{\myfigpdf{charts}{2.3in}}
\mbox{\myfigpdf{river}{2.3in}}
\end{center}
\caption{Polyhedral manifold representation.
The figure on the left shows two overlapping polyhedral (vertex) charts
consisting of the two rings of simplices around two vertices sharing an edge.
The region consisting of the two darkened triangles around the face $f$ is
denoted $\omega_f$, and represents the overlap of the two vertex charts.
Polyhedral manifold topology is represented by MC using the
{\em ringed-vertex} (or {\em RIVER}) datastructure.
The datastructure is illustrated for a given simplex $s$ in the figure on the
right; the topology primitives are vertices and $d$-simplices.
The collection of the simplices which meet the simplex $s$ at its vertices
(which then includes those simplices that share faces as well) is denoted
as $\omega_s$.
(The set $\omega_s$ includes $s$ itself.)
Edges are temporarily created during subdivision but are then destroyed 
(a similar ring datastructure is used to represent the edge topology). }
\label{fig:river}
\end{figure}
The ringed-vertex datastructure
is similar to the winged-edge, quad-edge, and edge-facet
datastructures commonly used in the computational geometry community for
representing 2-manifolds~\cite{Muck93}, but it can be used more generally
to represent arbitrary $d$-manifolds, $d \ge 2$.
It maintains a mesh of $d$-simplices with near minimal storage,
yet for shape-regular (non-degenerate) meshes, it provides $O(1)$-time access
to all information necessary for refinement, un-refinement, and
Petrov-Galerkin discretization of a differential operator.
The ringed-vertex datastructure also allows for dimension independent
implementations of mesh refinement and mesh manipulation, with one
implementation (the same code path) covering arbitrary dimension $d$.
An interesting feature of this datastructure is that the C structures used
for vertices, simplices, and edges are all of fixed size, so that a fast
array-based implementation is possible, as opposed to a less-efficient
list-based approach commonly taken for finite element implementations
on unstructured meshes.
A detailed description of the ringed-vertex datastructure, along with a
complexity analysis of various traversal algorithms, can be found
in~\cite{Hols97c}.

Since \MC\ is based entirely on the $d$-simplex, for adaptive refinement it
employs simplex bisection, using one of the simplex bisection strategies
outlined earlier.
Bisection is first used to refine an initial subset of the simplices in the 
mesh (selected according to some error indicator combined with
equi-distribution, discussed below), and then a
closure algorithm is performed in which bisection is used recursively on
any non-conforming simplices, until a conforming mesh is obtained.
If it is necessary to improve element shape, \MC\ attempts to optimize the
following simplex shape measure function for a given $d$-simplex $s$, in an
iterative fashion, similar to the approach taken in~\cite{BaSm97}:
\begin{equation}
\eta(s,d) = \frac{2^{2(1-\frac{1}{d})} 3^{\frac{d-1}{2}} |s|^{\frac{2}{d}}}
                 {\sum_{0 \le i < j \le d} |e_{ij}|^2}.
   \label{eqn:shape}
\end{equation}
The quantity $|s|$ represents the (possibly negative) volume of the
$d$-simplex, and $|e_{ij}|$ represents the length of the edge that
connects vertex $i$ to vertex $j$ in the simplex.
For $d=2$ this is the shape-measure used in~\cite{BaSm97} with
a slightly different normalization.
For $d=3$, the measure in~(\ref{eqn:shape}) is the shape-measure
developed in~\cite{LiJo94} 
again with a slightly different normalization.
The shape measure above can be shown to be equivalent to the sphere ratio
shape measure commonly used (cf.~\cite{LiJo94}).

%%%%%%%%%%%%%%%%%%%%%%%%%%%%%%%%%%%%%%%%%%%%%%%%%%%%%%%%%%%%%%%%%%%%
\subsection{Discretization, adaptivity, and error estimation in MC}

Given a nonlinear weak form $\langle F(u),v \rangle$,
its linearization bilinear form
$\langle DF(u)w,v \rangle$,
a Dirichlet function $\bar{u}$, and a collection of simplices
representing the domain, \MC\ uses a default linear element to produce and then
solve the implicitly defined nonlinear algebraic equations for the
basis function coefficients in the
expansion~(\ref{eqn:soln}).
The user can also provide their own element, specifying the number of degrees
of freedom to be located on vertices, edges, faces, and in the interior of
simplices, along with a quadrature rule, and the values of the trial (basis)
and test functions at the quadrature points on the master element.
Different element types may be used for different components of a
coupled elliptic system.
The availability of a user-defined general element makes it possible to,
for example, use quadratic elements as would be required in elasticity
applications to avoid locking.

Once the equations are assembled and solved (discussed below),
{\em a posteriori} error estimates are computed from the discrete
solution to drive adaptive mesh refinement.
The idea of adaptive error control in finite element methods is to estimate
the behavior of the actual solution to the problem using only a previously
computed numerical solution, and then use the estimate to build an improved
numerical solution by upping the polynomial order
($p$-refinement) or refining the mesh ($h$-refinement) where appropriate.
Note that this approach to adapting the mesh (or polynomial order) to the
local solution behavior affects not only approximation quality, but also
solution complexity: if a target solution accuracy can be obtained with
fewer mesh points by their judicious placement in the domain, the cost of
solving the discrete equations is reduced (sometimes dramatically) because
the number of unknowns is reduced (again, sometimes dramatically).
Generally speaking, if an elliptic equation has a solution with local singular
behavior, such as would result from the presence of abrupt changes in the
coefficients of the equation, or a domain singularity,
then adaptive methods tend to give dramatic improvements over
non-adaptive methods in terms of accuracy achieved
for a given complexity price.
Two examples illustrating bisection-based adaptivity patterns
(driven by a completely geometrical ``error'' indicator simply
for illustration) are shown in Figure~\ref{fig:refinements}.
\begin{figure}[tbh]
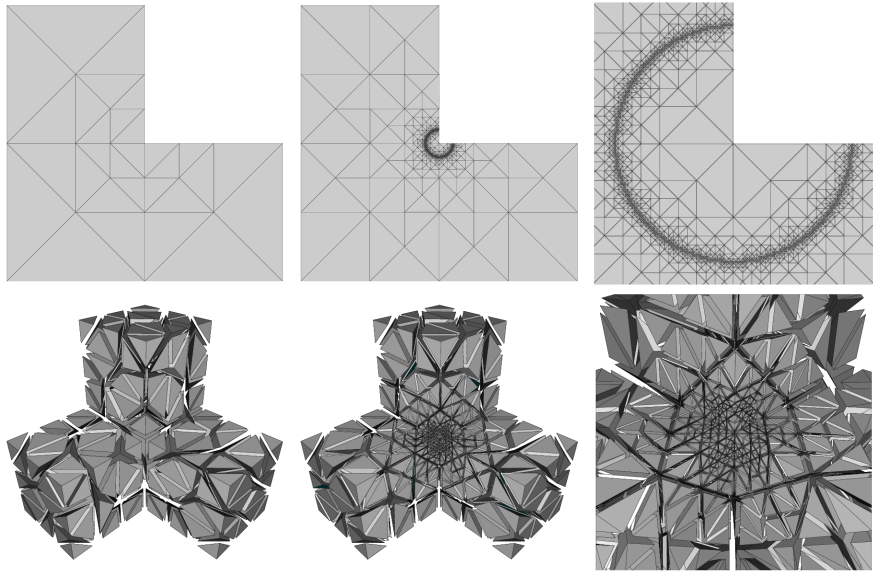

\begin{center}
\mbox{\myfigpng{mesh2d_0}{1.5in}}
\mbox{\myfigpng{mesh2d_1}{1.5in}}
\mbox{\myfigpng{mesh2d_2}{1.5in}} \\
\mbox{\myfigpng{mesh3d_0}{1.5in}}
\mbox{\myfigpng{mesh3d_1}{1.5in}}
\mbox{\myfigpng{mesh3d_2}{1.5in}}
\end{center}
\caption{Examples illustrating the 2D and 3D adaptive mesh refinement
         algorithms in MC.  The right-most figure in each row shows
         a close-up of the area where most of the refinement occured
         in each example.}
  \label{fig:refinements}
\end{figure}

MC employs the error indicators derived in Section~\ref{sec:indicators}
adaptive solution of nonlinear elliptic systems of the
form~(\ref{eqn:str1})--(\ref{eqn:str3}).
In particular, the indicators are used to adaptively construct
Galerkin solutions satisfying~(\ref{eqn:galerkin}), which
approximate weak solutions satisfying~(\ref{eqn:weak}).
MC can be directed to use either the local residual
indicator~(\ref{eqn:estimator}) together with the
principle of error equi-distribution, or the duality-based
weighted residual indicator~(\ref{eqn:estimator_duality}),
again together with equi-distribution.

Of course, we can't perform the integrals in~(\ref{eqn:estimator})
or~(\ref{eqn:estimator_duality}) exactly in most cases,
so we employ quadrature in \MC.
Another option is to project the data onto the finite element spaces
involved and then to perform the integrals exactly;
this approach is analyzed carefully in~\cite{Verf94}.
Note that $\eta_s$ is computable by quadrature, since all terms appearing in
the definition depend only on the (available) computed solution $u_h$.
In particular, each of the terms
$$
   A^{ia}(x^b,(u_h)^j,(u_h)^k_{~;c})_{;a},
~~~
   A^{ia}(x^b,(u_h)^j,(u_h)^k_{~;c}) n_a,
$$
$$
   B^i(x^b,(u_h)^j,(u_h)^k_{~;c}),
~~~
   C^i(x^b,(u_h)^j,(u_h)^k_{~;c}),
$$
depend only on $u_h$, its first derivatives, and the normal vector $n_q$
(which is known from simple geometrical calculations).
All of the terms but the first are already provided by the user as
part of the weak form $\langle F(u),v \rangle$ required to use \MC.
The only problematic term is the first one; this represents the strong
form of the principle part of the equation, and must be supplied by the
user as a separate piece of information.

In order to understand this more completely, we will briefly describe
some of the problem specification details in \MC.
To use \MC\ to discretize problems of the
form~(\ref{eqn:weak})--(\ref{eqn:weakForm}), 
the user is expected to provide the Dirichlet function $\bar{u}$
by providing the function $E$ in~(\ref{eqn:str3}).
In addition, the user provides the nonlinear weak form:
\begin{eqnarray}
\langle F(u),v \rangle
    &=& \int_{\calg{M}} \calg{G}_{ij} (A^{ia} v^j_{~;a} + B^i v^j)~dx
     + \int_{\partial_N \calg{M}} \calg{G}_{ij} C^i v^j ~ds
\nonumber \\
    &=& \int_{\calg{M}} F_0(u)(v) ~dx
    + \int_{\partial_N \calg{M}} F_1(u)(v) ~ds,
\label{eqn:weak_app}
\end{eqnarray}
by providing the integrand function $F_t(u)(v)$ defined as:
\begin{equation}
  \label{eqn:fu_v}
F_t(u)(v)
~=~
   \left\{ \begin{array}{ll}
    \calg{G}_{ij} (A^{ia} v^j_{~;a} + B^i v^j), & ~~\text{if}~ t = 0, \\
    \calg{G}_{ij} C^i v^j,                      & ~~\text{if}~ t = 1. \\
    \end{array}
    \right.
\end{equation}
In order to use the inexact Newton iteration in \MC\ to produce a
Petrov-Galerkin approximation satisfying~(\ref{eqn:galerkin}),
the user must also provide a corresponding bilinear linearization form:
\begin{equation}
  \label{eqn:weakLinearized_app}
\langle DF(u)w,v \rangle
    = \int_{\calg{M}} DF_0(u)(w,v) ~dx
    + \int_{\partial_N \calg{M}} DF_1(u)(w,v) ~ds,
\end{equation}
where the integrand function $DF_t(u)(w,v)$ is defined
through Gateaux differentiation as described in
Section~\ref{sec:elliptic_general}.
In order to use the {\em a posteriori} error estimator in \MC,
the user must provide
an additional vector-valued function $SF_t(u)$, defined as:
\begin{equation}
  \label{eqn:sf_u}
SF_t(u)
~=~
   \left\{ \begin{array}{ll}
    B^i - A^{ia}_{~~;a}, & ~~\text{if}~ t = 0, \\
    C^i + A^{ia} n_a,    & ~~\text{if}~ t = 1, \\
    A^{ia} n_a,          & ~~\text{if}~ t = 2. \\
    \end{array}
    \right.
\end{equation}
The key point that must be emphasized here is the following:
since \MC\ employs
quadrature to evaluate the integrals appearing
in each of~(\ref{eqn:weak_app}), (\ref{eqn:weakLinearized_app}),
and~(\ref{eqn:estimator}),
{\em 
the user-provided functions $F_t(u)(v)$,
$DF_t(u)(w,v)$, and $SF_t(u)$
only need to be evaluated at a single point $x^p \in \calg{M}$ at a time.
}
In other words, the user can simply evaluate the expressions
in~(\ref{eqn:fu_v}) and in~(\ref{eqn:sf_u}) as if they were point vectors
and point tensors, rather than vector and tensor fields.
This is one of the most powerful features of \MC, and of nonlinear
finite element software in general.
It implies that the user-defined functions
$F_t(u)(v)$,
$DF_t(u)(w,v)$, and $SF_t(u)$ can usually be implemented
to appear in software exactly as they do on paper.

The remaining quantities appearing in the estimator~(\ref{eqn:estimator}),
namely the normal vector $n_q$ and the mesh parameters $h_s$ and $h_f$,
are completely geometrical and can be computed from the
local simplex geometry information.
The indicator is very inexpensive when compared to the typical cost of
producing the discrete solution $u_h$ itself; the number of function
evaluations and arithmetic operations (for performing quadrature)
is always linear in the total number of simplices.
Moreover, the indicator is completely {\em local};
it can be computed chart-wise when multiple coordinate
systems are employed.
These ideas are explored more fully in~\cite{Hols97c}.

%%%%%%%%%%%%%%%%%%%%%%%%%%%%%%%%%%%%%%%%%%%%%%%%%%%%%%%%%%%%%%%%%%%%
\subsection{Solution of linear and nonlinear systems in MC}
   \label{sec:mc_solvers}

When a system of nonlinear finite element equations must be solved in \MC,
the global inexact-Newton Algorithm~\ref{alg:newton} is employed,
where the linearization systems are solved by linear multilevel methods.
When necessary, the Newton procedure in Algorithm~\ref{alg:newton} is
supplemented with a user-defined normalization equation for performing
an augmented system continuation algorithm.
The linear systems arising as the Newton equations in each iteration of
Algorithm~\ref{alg:newton} are solved using a completely algebraic multilevel
algorithm.
Either refinement-generated prolongation matrices $P_k$, or user-defined
prolongation matrices $P_k$ in a standard YSMP-row-wise sparse matrix format,
are used to define the multilevel hierarchy algebraically.
In particular, once the single ``fine'' mesh is used to produce the
discrete nonlinear problem $F(u)=0$ along with its linearization $Au=f$
for use in the Newton iteration in Algorithm~\ref{alg:newton}, a $J$-level
hierarchy of linear problems is produced algebraically using the following
recursion:
$$
A_{k+1} = P_k^T A_k P_k,
  \ \ \ \ \ \ \ \ k=1,\ldots,J-1,
  \ \ \ \ \ \ \ \ A_1 \equiv A.
$$
As a result, the underlying multilevel algorithm is provably convergent
in the case of self-adjoint-positive matrices~\cite{HoVa97a}.
Moreover, the multilevel algorithm has provably optimal $O(N)$ convergence
properties under the standard assumptions for uniform
refinements~\cite{Xu92a},
and is nearly-optimal $O(N \log N)$ under very weak assumptions on
adaptively refined problems~\cite{BDY88,Akso01}.
In the adaptive setting, a stabilized (approximate wavelet) hierarchical
basis method is employed~\cite{Akso01}.
External software can also be used to generate the prolongation matrices,
so that a number of different graph theory-based algebraic multilevel
coarsening algorithms may be used to generate the subspace hierarchy.

Coupled with the superlinear convergence properties of the outer inexact
Newton iteration in Algorithm~\ref{alg:newton}, this leads to
an overall complexity of $O(N)$ or $O(N \log N)$ for the solution of the
discrete nonlinear problems in Step~1 of Algorithm~\ref{alg:adapt}.
Combining this low-complexity solver with the judicious placement of unknowns
only where needed due to the error estimation in Step~2 and the subdivision
algorithm in Steps~3-6 of Algorithm~\ref{alg:adapt}, leads to a very
effective low-complexity approximation technique for solving a general
class of nonlinear elliptic systems on 2- and 3-manifolds.

%%%%%%%%%%%%%%%%%%%%%%%%%%%%%%%%%%%%%%%%%%%%%%%%%%%%%%%%%%%%%%%%%%%%
\subsection[Parallel computing in MC: the PPUM method]
           {Parallel computing in MC:
            The Parallel Partition of Unity Method (PPUM)}
\label{sec:ppum}

\MC\ incorporates a new approach to the use of parallel computers
with adaptive finite element methods, based on combining the
{\em Partition of Unity Method (PUM)} of Babu\v{s}ka and Melenk~\cite{BaMe97}
with local error estimate techniques of Xu and Zhou~\cite{XuZh97}.
The algorithm, which we refer to as the {\em Parallel Partition of
Unity Method (PPUM)}, is described in detail in~\cite{BaHo98a,BHMP98}.
The idea of the algorithm is as follows.
\begin{algorithmX}
   \label{alg:ppum}
{\em (PPUM - Parallel Partition of Unity Method~\cite{BaHo98a})}
\label{alg:bh}
\begin{center}
\begin{enumerate}
\item Discretize and solve the problem using a global coarse mesh.
\item Compute {\em a posteriori} error estimates using the coarse solution, and decompose the mesh to achieve equal error using weighted spectral or inertial bisection.
\item Give the entire mesh to a collection of processors, where each processor will perform a completely independent solve-estimate-refine loop (Step~2 through Step~6 in Algorithm~\ref{alg:adapt}), restricting local refinement to only an assigned portion of the domain.  The portion of the domain assigned to each processor coincides with one of the domains produced by spectral bisection with some overlap (produced by conformity algorithms, or by explicitly enforcing substantial overlap).  When a processor has reached an error tolerance locally, computation stops on that processor.
\item Combine the independently produced solutions using a partition of unity subordinate to the overlapping subdomains.
\end{enumerate}
\end{center}
\end{algorithmX}
While the algorithm above seems to ignore the global coupling of the
elliptic problem, some recent theoretical results~\cite{XuZh97}
support this as provably good, and even optimal in some cases.
The principle idea underlying the results in~\cite{XuZh97} is that while
elliptic problems are globally coupled, this global coupling is essentially
a ``low-frequency'' coupling, and can be handled on the initial mesh which
is much coarser than that required for approximation accuracy considerations.
This idea has been exploited, for example, in~\cite{Xu92b,Xu92c}, 
and is in fact 
why the construction of a coarse problem in overlapping domain decomposition
methods is the key to obtaining convergence rates which are independent
of the number of subdomains (c.f.~\cite{Xu92a}).
A more complete description can be found in~\cite{BaHo98a},
along with examples using \MC\ and the 2D adaptive finite element
package PLTMG~\cite{PLTMG}. 
An analysis of the global $L^2$- and $H^1$-error in solutions produced by
the algorithm appears in the next section.
An example showing the types of local refinements that occur within each
subdomain is depicted in Figure~\ref{fig:parallel}.
\begin{figure}[tbh]
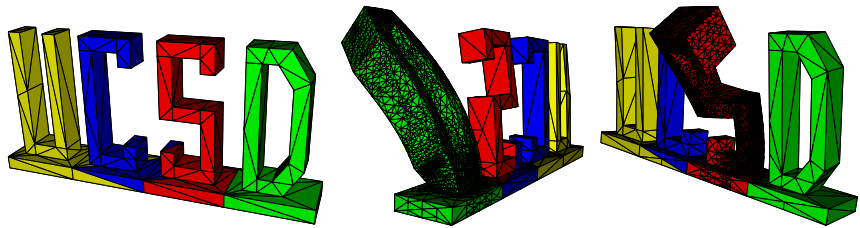

\begin{center}
\mbox{\myfigpdf{para0}{1.1in}}
\mbox{\myfigpdf{para1}{1.2in}}
\mbox{\myfigpdf{para2}{1.2in}}
\end{center}
\caption{An example showing the types of local refinements that are
         created by PPUM.}
  \label{fig:parallel}
\end{figure}

%%%%%%%%%%%%%%%%%%%%%%%%%%%%%%%%%%%%%%%
\subsection{Global $L^2$- and $H^1$-error estimates for PPUM}

In order to analyze the error behavior in PPUM, we first review the
partition of unity method (PUM) of
Babu\v{s}ka and Melenk~\cite{BaMe97}.
Let $\Omega \subset \bbbb{R}^d$ be an open set and let $\{ \Omega_i \}$
be an open cover of $\Omega$ with a bounded local overlap property:
For all $x \in \Omega$, there exists a constant $M$ such that
\begin{equation}
   \label{eqn:pum_overlap}
\sup_i \{~ i ~|~ x \in \Omega_i ~\} \le M.
\end{equation}
A Lipschitz {\em partition of unity} $\{ \phi_i \}$
subordinate to the cover $\{ \Omega_i \}$ satisfies the
following five conditions:
\begin{eqnarray}
\sum_i \phi_i(x) &\equiv& 1, ~~~ \forall x \in \Omega, 
\label{eqn:pum_pou1}    \\
\phi_i &\in& C^k(\Omega) ~~~ \forall i, ~~~ (k \ge 0),
    \\
\sup \phi_i &\subset& \overline{\Omega}_i, ~~~ \forall i,
    \\
\| \phi_i \|_{L^{\infty}(\Omega)} &\le& C_{\infty}, ~~~ \forall i,
    \\
\| \nabla \phi_i \|_{L^{\infty}(\Omega)} 
   &\le& \frac{C_G}{\text{diam}(\Omega_i)}, ~~~ \forall i.
\label{eqn:pum_pou5}
\end{eqnarray}
The {\em partition of unity method (PUM)}
builds an approximation $u_{ap} = \sum_i \phi_i v_i$ where the
$v_i$ are taken from the local approximation spaces:
\begin{equation}
   \label{eqn:pum_local_spaces}
V_i \subset C^k(\Omega \cap \Omega_i)
     \subset H^1(\Omega \cap \Omega_i), ~~~ \forall i, ~~~ (k \ge 0).
\end{equation}
The following simple lemma makes possible several useful results.
\begin{lemma} \label{lemma:pum}
Let $w,w_i \in H^1(\Omega)$ with
$\text{supp}~ w_i \subseteq \overline{\Omega \cap \Omega_i}$.
Then
\begin{eqnarray*}
\sum_i \|w\|_{H^k(\Omega_i)}^2 
   &\le& M \|w\|_{H^k(\Omega)}^2, ~~~~ k=0,1
\\
\| \sum_i w_i \|_{H^k(\Omega)}^2 
   &\le& M \sum_i \|w_i\|_{H^k(\Omega \cap \Omega_i)}^2, ~~~~ k=0,1
\end{eqnarray*}
\end{lemma}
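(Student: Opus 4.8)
The plan is to obtain both inequalities as purely measure-theoretic consequences of the bounded local overlap property~(\ref{eqn:pum_overlap}); there is no elliptic content here, only a pointwise counting argument for the cover $\{\Omega_i\}$ together with the Cauchy--Schwarz inequality for finite sums. Throughout I would use that~(\ref{eqn:pum_overlap}) is equivalent to the statement that the indicator functions satisfy $\sum_i \chi_{\Omega_i}(x) \le M$ for every $x \in \Omega$.

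First I would treat the first inequality with $k=0$. Writing each norm $\|w\|_{L^2(\Omega_i)}$ as the integral over $\Omega$ of $\chi_{\Omega_i}\,|w|^2$ and interchanging the (nonnegative) sum with the integral by Tonelli's theorem gives
\[
\sum_i \|w\|_{L^2(\Omega_i)}^2
 = \int_\Omega \Big(\sum_i \chi_{\Omega_i}(x)\Big)\,|w(x)|^2\,dx
 \le M\,\|w\|_{L^2(\Omega)}^2 .
\]
Applying the identical argument with $|\nabla w|^2$ in place of $|w|^2$ and adding the two estimates yields the case $k=1$, since $\|\cdot\|_{H^1(\Omega')}^2 = \|\cdot\|_{L^2(\Omega')}^2 + \|\nabla\cdot\|_{L^2(\Omega')}^2$ for any admissible $\Omega'$.

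For the second inequality I would exploit the support hypothesis $\mathrm{supp}\,w_i \subseteq \overline{\Omega\cap\Omega_i}$: at almost every fixed $x$ the family $\{w_i(x)\}_i$ has at most $M$ nonzero entries, so the elementary bound $\big|\sum_i a_i\big|^2 \le \big(\#\{i : a_i\ne 0\}\big)\sum_i |a_i|^2 \le M\sum_i|a_i|^2$ applies pointwise with $a_i = w_i(x)$. (In particular $\sum_i w_i(x)$ is a finite sum a.e.\ and hence well defined; if the right-hand side of the asserted inequality is infinite there is nothing to prove, so one may assume $\sum_i \|w_i\|_{L^2(\Omega\cap\Omega_i)}^2 < \infty$, and one may alternatively argue with finite partial sums and pass to the limit.) Integrating over $\Omega$ and again swapping sum and integral by Tonelli,
\[
\left\|\sum_i w_i\right\|_{L^2(\Omega)}^2
 \le M\sum_i \int_\Omega |w_i|^2\,dx
 = M\sum_i \|w_i\|_{L^2(\Omega\cap\Omega_i)}^2 .
\]
Since $\mathrm{supp}\,\nabla w_i \subseteq \mathrm{supp}\,w_i \subseteq \overline{\Omega\cap\Omega_i}$ and $\nabla\big(\sum_i w_i\big) = \sum_i \nabla w_i$ (handled by the same partial-sum/limit argument), the identical $M$-term Cauchy--Schwarz bound applies to the gradients; adding the resulting $L^2$-estimates for $\sum_i w_i$ and for $\sum_i \nabla w_i$ gives the case $k=1$.

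I do not anticipate a serious obstacle. The only point requiring care is the mismatch between the open sets $\Omega_i$ in~(\ref{eqn:pum_overlap}) and their closures appearing in the support hypothesis: a point of $\bigcup_i \partial\Omega_i$ could a priori lie in more than $M$ of the closed sets $\overline{\Omega_i}$. Under the Lipschitz assumption on the cover this exceptional set has Lebesgue measure zero, so it affects none of the integrals above and I would simply remark on it and discard it. Everything else is routine.
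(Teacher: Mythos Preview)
Your argument is correct and is precisely the standard measure-theoretic proof; the paper itself does not spell out a proof but simply cites the bounded overlap property~(\ref{eqn:pum_overlap}) together with the partition of unity conditions~(\ref{eqn:pum_pou1})--(\ref{eqn:pum_pou5}) and refers to Babu\v{s}ka--Melenk. You are in fact slightly sharper than the paper's citation: as you correctly observe, the lemma uses only the overlap bound~(\ref{eqn:pum_overlap}) and the support hypothesis on the $w_i$, and does not require the partition of unity properties~(\ref{eqn:pum_pou1})--(\ref{eqn:pum_pou5}) at all (those enter only later in Theorem~\ref{thm:pum}).
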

\begin{proof}
The proof follows from~(\ref{eqn:pum_overlap})
and~(\ref{eqn:pum_pou1})--(\ref{eqn:pum_pou5});
see~\cite{BaMe97}.\qed
\end{proof}
The basic approximation properties of PUM are as follows.
\begin{theorem} [Babu\v{s}ka and Melenk~\cite{BaMe97}]
   \label{thm:pum}
If the local spaces $V_i$ have the
following approximation properties:
\begin{eqnarray*}
\|u-v_i\|_{L^2(\Omega \cap \Omega_i)} &\le& \epsilon_0(i),
   ~~~ \forall i, \\
\|\nabla(u-v_i)\|_{L^2(\Omega \cap \Omega_i)} &\le& \epsilon_1(i), 
   ~~~ \forall i,
\end{eqnarray*}
then the following {\em a priori} global error estimates hold:
\begin{eqnarray*}
\|u-u_{ap}\|_{L^2(\Omega)}
   &\le& \sqrt{M} C_{\infty} \left( \sum_i \epsilon_0^2(i) \right)^{1/2}, \\
\|\nabla(u-u_{ap})\|_{L^2(\Omega)}
   &\le& \sqrt{2M} \left( 
     \sum_i \left( \frac{C_G}{\text{diam}(\Omega_i)} \right)^2
         \epsilon_1^2(i)
     + C_{\infty}^2 \epsilon_0^2(i) \right)^{1/2}.
\end{eqnarray*}
\end{theorem}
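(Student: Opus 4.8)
The plan is to reduce both estimates to the single identity furnished by the partition of unity, combined with the finite-overlap bookkeeping already packaged in Lemma~\ref{lemma:pum}. Since $\sum_i \phi_i \equiv 1$ on $\Omega$ by \eqref{eqn:pum_pou1}, I can write $u = \sum_i \phi_i u$, so that the global error decomposes into localized contributions,
\begin{equation*}
  u - u_{ap} \;=\; \sum_i \phi_i u - \sum_i \phi_i v_i \;=\; \sum_i \phi_i(u - v_i) \;=:\; \sum_i w_i ,
\end{equation*}
where $w_i = \phi_i(u-v_i)$. Since $\text{supp}~\phi_i \subseteq \overline{\Omega}_i$, each $w_i$ has $\text{supp}~w_i \subseteq \overline{\Omega \cap \Omega_i}$, which is precisely the hypothesis needed for the second inequality of Lemma~\ref{lemma:pum}.

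For the $L^2$-estimate I would apply the $k=0$ case of that inequality,
\begin{equation*}
  \|u - u_{ap}\|_{L^2(\Omega)}^2 \;\le\; M \sum_i \|w_i\|_{L^2(\Omega \cap \Omega_i)}^2 ,
\end{equation*}
and then bound each term by pulling $\phi_i$ out in the sup-norm: the partition-of-unity conditions \eqref{eqn:pum_pou1}--\eqref{eqn:pum_pou5} give $\|\phi_i\|_{L^\infty(\Omega)} \le C_\infty$, and the hypothesis gives $\|u-v_i\|_{L^2(\Omega\cap\Omega_i)} \le \epsilon_0(i)$, so $\|w_i\|_{L^2(\Omega\cap\Omega_i)} \le C_\infty\,\epsilon_0(i)$. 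Summing over $i$ and taking square roots yields $\|u-u_{ap}\|_{L^2(\Omega)} \le \sqrt{M}\,C_\infty\big(\sum_i \epsilon_0^2(i)\big)^{1/2}$.

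For the gradient estimate the only new ingredient is the product rule, $\nabla w_i = (\nabla \phi_i)(u-v_i) + \phi_i\,\nabla(u-v_i)$, so $\nabla(u-u_{ap}) = \sum_i \nabla w_i$ is again a sum of $L^2$ vector fields supported in $\overline{\Omega\cap\Omega_i}$. Applying the $k=0$ ($L^2$) part of Lemma~\ref{lemma:pum} componentwise gives $\|\nabla(u-u_{ap})\|_{L^2(\Omega)}^2 \le M \sum_i \|\nabla w_i\|_{L^2(\Omega\cap\Omega_i)}^2$. I would then estimate each summand with the triangle inequality, using $\|\nabla\phi_i\|_{L^\infty(\Omega)} \le C_G/\text{diam}(\Omega_i)$ from \eqref{eqn:pum_pou5}, $\|\phi_i\|_{L^\infty(\Omega)} \le C_\infty$, and the two approximation hypotheses, to obtain
\begin{equation*}
  \|\nabla w_i\|_{L^2(\Omega\cap\Omega_i)} \;\le\; \frac{C_G}{\text{diam}(\Omega_i)}\,\epsilon_0(i) + C_\infty\,\epsilon_1(i) .
\end{equation*}
Squaring via $(a+b)^2 \le 2(a^2+b^2)$, summing over $i$, and taking square roots then produces the estimate with constant $\sqrt{2M}$ and the two weighted error terms.

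I do not anticipate a genuine obstacle: once Lemma~\ref{lemma:pum} is in hand, both inequalities are short exercises in the triangle inequality and Cauchy--Schwarz, the $M$-dependent overlap counting having been absorbed into the lemma. The only points meriting care are (i) verifying the support inclusions $\text{supp}~w_i \subseteq \overline{\Omega\cap\Omega_i}$ (and likewise for $\nabla w_i$) so that Lemma~\ref{lemma:pum} genuinely applies, and (ii) in the gradient bound, keeping straight which approximation error, $\epsilon_0(i)$ or $\epsilon_1(i)$, is paired with which pointwise bound, $\|\nabla\phi_i\|_{L^\infty}$ or $\|\phi_i\|_{L^\infty}$, when the product rule is expanded. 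I would present the $L^2$ case in full detail and then indicate only the modifications for the $H^1$ case.
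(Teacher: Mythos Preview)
Your approach is correct and is exactly the one the paper indicates: the paper's entire proof is the single sentence ``This follows from Lemma~\ref{lemma:pum} by taking $u-u_{ap}=\sum_i\phi_i(u-v_i)$ and $w_i=\phi_i(u-v_i)$,'' and you have supplied the details it omits.

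One remark worth making: your product-rule computation gives the pairing $\tfrac{C_G}{\text{diam}(\Omega_i)}\,\epsilon_0(i)$ and $C_\infty\,\epsilon_1(i)$, which is the correct one (and agrees with the original Babu\v{s}ka--Melenk result). The theorem as stated in this paper has the indices on $\epsilon_0$ and $\epsilon_1$ swapped in the gradient estimate; your point~(ii) about keeping the pairings straight is well taken, and your derivation is the right one.
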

\begin{proof}
This follows from Lemma~\ref{lemma:pum} by taking
$u-u_{ap} = \sum_i \phi_i(u-v_i)$ and $w_i = \phi_i(u-v_i)$.\qed
\end{proof}

We now give a global $H^1$-error estimate of the PPUM
adaptive algorithm proposed in~\cite{BaHo98a}.
We can view PPUM as building a PUM approximation
$u_{pp} = \sum_i \phi_i v_i$ where the $v_i$ are taken from the
local spaces:
\begin{equation}
   \label{eqn:ppum_local_spaces}
V_i = \calg{X}_i V_i^g \subset C^k(\Omega \cap \Omega_i)
     \subset H^1(\Omega \cap \Omega_i), ~~~ \forall i, ~~~ (k \ge 0),
\end{equation}
where $\calg{X}_i$ is the characteristic function for $\Omega_i$, and where
\begin{equation}
   \label{eqn:ppum_local_spaces2}
V_i^g \subset C^k(\Omega) \subset H^1(\Omega), ~~~ \forall i, ~~~ (k \ge 0).
\end{equation}
In PPUM, the global spaces $V_i^g$
in~(\ref{eqn:ppum_local_spaces})--(\ref{eqn:ppum_local_spaces2})
are built from locally enriching an initial coarse global space $V_0$
by locally adapting the finite element mesh on which $V_0$ is built.
(This is in contrast to classical overlapping schwarz domain decomposition
methods where local spaces are often built through enrichment of
$V_0$ by locally adapting the mesh on which $V_0$ is built,
and then removing the portions of the mesh exterior to the
adapted region.)
The PUM space $V$ is then
\begin{eqnarray*}
V &=& \left\{~ v ~|~ v = \sum_i \phi_i v_i, ~~~ v_i \in V_i ~\right\} \\
  &=& \left\{~ v ~|~ v = \sum_i \phi_i \calg{X}_i v_i^g = \sum_i \phi_i v_i^g, 
     ~~~ v_i^g \in V_i^g ~\right\} \subset H^1(\Omega).
\nonumber
\end{eqnarray*}

Consider now the following linear elliptic problem in the plane:
\begin{equation}
   \label{eqn:linmod}
\begin{array}{rcl}
- \nabla \cdot (a \nabla u) &=& f ~\text{in}~ \Omega, \\
                          u &=& 0 ~\text{on}~ \partial \Omega,
\end{array}
\end{equation}
where $a_{ij} \in W^{1,\infty}(\Omega)$, $f \in L^2(\Omega)$,
$a_{ij} \xi_i \xi_j \ge a_0 > 0$, $\forall \xi_i \ne 0$,
where $\Omega \subset \bbbb{R}^2$ is a convex polygon.
(The results below also hold more generally for classes
of two- and three-dimensional nonlinear problems.)
A weak formulation is:
$$
\text{Find}~u \in H^1_0(\Omega) ~\text{such~that}~
     \langle F(u),v \rangle = 0, ~~~\forall v \in H^1_0(\Omega),
$$
where
$$
\langle F(u),v \rangle = \int_{\Omega} a \nabla u \cdot \nabla v ~dx
                       - \int_{\Omega} f v ~dx.
$$
The PUM is usually used to solve a PDE as a Galerkin
method in the globally coupled PUM space (cf.~\cite{GrSc00}):
$$
\text{Find}~ u_{ap} \in V \subset H^1_0(\Omega) ~\text{s.t.}~
    \langle F(u_{ap}),v \rangle = 0, ~\forall v \in V \subset H^1_0(\Omega).
$$
In contrast, PPUM proposed in~\cite{BaHo98a}
builds an approximation $u_{pp}$ from
decoupled local Galerkin solutions:
\begin{equation}
   \label{eqn:ppum_upp}
u_{pp} = \sum_i \phi_i u_i
       = \sum_i \phi_i u_i^g,
\end{equation}
where each $u_i^g$ satisfies:
\begin{equation}
   \label{eqn:ppum_uig}
\text{Find}~ u_i^g \in V_i^g ~\text{such~that}~
       \langle F(u_i^g),v_i^g \rangle = 0, ~~~\forall v_i^g \in V_i^g.
\end{equation}
We have the following global error estimate for the approximation $u_{pp}$ 
in~(\ref{eqn:ppum_upp}) built from~(\ref{eqn:ppum_uig})
using the local PPUM parallel algorithm.
\begin{theorem} \label{thm:bh}
Assume the solution to~(\ref{eqn:linmod}) satisfies
$u \in H^{1+\alpha}(\Omega)$, $\alpha > 0$, and assume that
quasi-uniform meshes of sizes $h$ and $H > h$ are used for $\Omega_i^0$
and $\Omega \backslash \Omega_i^0$ respectively.
If $\text{diam}(\Omega_i) \ge 1/Q > 0 ~~\forall i$, then
the global solution $u_{pp}$ in~(\ref{eqn:ppum_upp}) produced by the PPUM
Algorithm~\ref{alg:bh} satisfies the following global error bounds:
\begin{eqnarray*}
\|u-u_{pp}\|_{L^2(\Omega)}
   &\le& \sqrt{PM} C_{\infty} 
         \left( C_1 h^{\alpha} + C_2 H^{1+\alpha} \right), \\
\|\nabla(u-u_{pp})\|_{L^2(\Omega)}
   &\le& \sqrt{2PM (Q^2 C_G^2 + C_{\infty}^2)}
         \left( C_1 h^{\alpha} + C_2 H^{1+\alpha} \right),
\end{eqnarray*}
where $P = $ number of local spaces $V_i$.
Further, if $H \le h^{\alpha/(1+\alpha)}$ then:
\begin{eqnarray*}
\|u-u_{pp}\|_{L^2(\Omega)}
   &\le& \sqrt{PM} C_{\infty} \max\{C_1,C_2\} h^{\alpha}, \\
\|\nabla(u-u_{pp})\|_{L^2(\Omega)}
    &\le& \sqrt{2PM (Q^2 C_G^2 + C_{\infty}^2)} \max\{C_1,C_2\} h^{\alpha},
\end{eqnarray*}
so that the solution produced by Algorithm~\ref{alg:bh}
is of optimal order in the $H^1$-norm.
\end{theorem}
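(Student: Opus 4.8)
The plan is to deduce Theorem~\ref{thm:bh} directly from the Babu\v{s}ka--Melenk approximation estimate (Theorem~\ref{thm:pum}) by verifying its hypotheses with the decoupled local Galerkin solutions $u_i^g$ from~(\ref{eqn:ppum_uig}) playing the role of the local approximants $v_i$. Concretely, I would set $v_i = u_i^g|_{\Omega \cap \Omega_i}$, so that $u_{pp} = \sum_i \phi_i u_i^g = \sum_i \phi_i v_i$ is precisely the PUM approximation built from the local spaces $V_i = \calg{X}_i V_i^g$ in~(\ref{eqn:ppum_local_spaces}), with $\{\phi_i\}$ the Lipschitz partition of unity subordinate to the overlapping cover $\{\Omega_i\}$ produced by the spectral/inertial bisection in Algorithm~\ref{alg:bh}. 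Everything then reduces to producing the two numbers $\epsilon_0(i)$ and $\epsilon_1(i)$ that control $\|u - u_i^g\|_{L^2(\Omega \cap \Omega_i)}$ and $\|\nabla(u - u_i^g)\|_{L^2(\Omega \cap \Omega_i)}$.

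The heart of the argument --- and the step I expect to be the main obstacle --- is the local error estimate for each $u_i^g$. Here I would invoke the interior (local) error estimates of Xu and Zhou~\cite{XuZh97}: since $u_i^g$ is the global Galerkin solution on a mesh that is quasi-uniform of size $h$ on $\Omega_i^0$ and quasi-uniform of size $H > h$ on $\Omega \setminus \Omega_i^0$, with $\Omega \cap \Omega_i$ lying well inside the fine-mesh region $\Omega_i^0$, the Xu--Zhou estimate bounds the local $H^1$-error on $\Omega \cap \Omega_i$ by a constant times the local best-approximation error over a slightly enlarged subset of $\Omega_i^0$, plus a constant times a global error measured in the weaker $L^2(\Omega)$-norm (the ``pollution'' term). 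The local best-approximation term is $O(h^{\alpha})$ in $H^1$ (and $O(h^{1+\alpha})$ in $L^2$) by standard interpolation estimates, using $u \in H^{1+\alpha}(\Omega)$ together with the fine mesh size $h$. The pollution term is handled by a global Aubin--Nitsche duality argument: because $\Omega$ is a convex polygon the dual problem for~(\ref{eqn:linmod}) is $H^2$-regular, so $\|u-u_i^g\|_{L^2(\Omega)} \le C H \|\nabla(u-u_i^g)\|_{L^2(\Omega)} \le C H (C_1 h^\alpha + C_2 H^\alpha) \le C H^{1+\alpha}$, the global $H^1$-error being dominated by the coarse scale $H > h$. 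Combining, one obtains $\epsilon_1(i) \le C_1 h^{\alpha} + C_2 H^{1+\alpha}$ and $\epsilon_0(i) \le C_1 h^{\alpha} + C_2 H^{1+\alpha}$ (the $L^2$ bound deliberately not sharpened in $h$, which still suffices). The delicate points are that the graded mesh carries two relevant scales and one must carefully track which scale governs each term, and that the hypotheses of the Xu--Zhou estimates --- positioning of the interior subregions, quasi-uniformity of each mesh piece, and the bounded-overlap structure of $\{\Omega_i\}$ --- genuinely hold in this setting.

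With the local estimates in hand, the remaining steps are routine bookkeeping. Substituting $\epsilon_0(i), \epsilon_1(i) \le C_1 h^{\alpha} + C_2 H^{1+\alpha}$ into the conclusions of Theorem~\ref{thm:pum}, bounding $\big(\sum_i \epsilon_k^2(i)\big)^{1/2} \le \sqrt{P}\,\max_i \epsilon_k(i)$ with $P$ the number of local spaces $V_i$, and using the hypothesis $\mathrm{diam}(\Omega_i) \ge 1/Q$ to replace $C_G/\mathrm{diam}(\Omega_i)$ by $Q C_G$ in the gradient estimate, yields
\[
\|u-u_{pp}\|_{L^2(\Omega)} \le \sqrt{PM}\,C_\infty\,(C_1 h^\alpha + C_2 H^{1+\alpha}),
\]
\[
\|\nabla(u-u_{pp})\|_{L^2(\Omega)} \le \sqrt{2PM(Q^2 C_G^2 + C_\infty^2)}\,(C_1 h^\alpha + C_2 H^{1+\alpha}),
\]
which are exactly the asserted bounds (here $M$ is the overlap constant of~(\ref{eqn:pum_overlap})).

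Finally, the ``further'' assertion is immediate: if $H \le h^{\alpha/(1+\alpha)}$ then $H^{1+\alpha} \le h^\alpha$, so $C_1 h^\alpha + C_2 H^{1+\alpha} \le (C_1 + C_2) h^\alpha$, and after relabeling the constants this gives the stated $O(h^\alpha)$ estimates in both norms; in particular the $H^1$-bound is of optimal order $h^\alpha$ for a solution in $H^{1+\alpha}(\Omega)$. I would close by remarking, as the theorem statement does, that although the argument is carried out for the model linear problem~(\ref{eqn:linmod}), the only ingredients used --- coercivity, $H^{1+\alpha}$ regularity of the primal solution, and $H^2$ regularity of the dual problem --- are available for the relevant classes of two- and three-dimensional nonlinear problems, so the estimate extends to those cases as well.
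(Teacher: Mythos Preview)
Your proposal is correct and follows essentially the same route as the paper: cast PPUM as a PUM so that Theorem~\ref{thm:pum} applies, invoke the Xu--Zhou local estimate to bound $\|u-u_i^g\|_{H^1(\Omega_i\cap\Omega)}$ by a local best-approximation term over $\Omega_i^0$ (giving $C_1 h^\alpha$) plus the global $L^2$ pollution term (giving $C_2 H^{1+\alpha}$), set $\epsilon_0(i)=\epsilon_1(i)=C_1 h^\alpha+C_2 H^{1+\alpha}$, and substitute with the $P$- and $Q$-bookkeeping. You even spell out the Aubin--Nitsche step behind the $H^{1+\alpha}$ pollution bound, which the paper simply asserts.
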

\begin{proof}
Viewing PPUM as a PUM gives access to the PUM {\em a priori}
estimates in Theorem~\ref{thm:pum};
these require local estimates of the form:
\begin{eqnarray*}
\|u-u_i\|_{L^2(\Omega \cap \Omega_i)} 
  =   \|u-u_i^g\|_{L^2(\Omega \cap \Omega_i)} &\le& \epsilon_0(i), \\
\|\nabla(u-u_i)\|_{L^2(\Omega \cap \Omega_i)} 
  =   \|\nabla(u-u_i^g)\|_{L^2(\Omega \cap \Omega_i)} &\le& \epsilon_1(i).
\end{eqnarray*}
Such local {\em a priori} estimates are available
for problems of the form~(\ref{eqn:linmod})~\cite{NiSc74,XuZh97}.
They can be shown to take the following form:
$$
\| u - u_i^g \|_{H^1(\Omega_i \cap \Omega)} \le C \left(
    \inf_{v_i^0 \in V_i^0} \|u - v_i^0\|_{H^1(\Omega_i^0 \cap \Omega)}
  + \| u - u_i^g \|_{L^2(\Omega)} \right)
$$
where
$$
   V_i^0 \subset C^k(\Omega_i^0 \cap \Omega) \subset H^1(\Omega_i \cap \Omega),
$$
and where
$$
   \Omega_i \subset \subset \Omega_i^0,
   \ \ \ \ \ \Omega_{ij} = \Omega_i^0 \bigcap \Omega_i^0,
   \ \ \ \ \ |\Omega_{ij} | \approx | \Omega_i | \approx | \Omega_j |.
$$
Since we assume $u \in H^{1+\alpha}(\Omega)$, $\alpha > 0$,
and since quasi-uniform meshes of sizes $h$ and $H > h$ are used
for $\Omega_i^0$ and $\Omega \backslash \Omega_i^0$ respectively, we have:
\begin{eqnarray*}
\| u - u_i^g \|_{H^1(\Omega_i \cap \Omega)}
&=& \left(
\| u - u_i^g \|_{L^2(\Omega_i \cap \Omega)}^2
+ \| \nabla( u - u_i^g) \|_{L^2(\Omega_i \cap \Omega)}^2
\right)^{1/2}
\nonumber \\
&\le& C_1 h^{\alpha} + C_2 H^{1 + \alpha}.
\end{eqnarray*}
I.e., in this setting we can use
$\epsilon_0(i) = \epsilon_1(i) = C_1 h^{\alpha} + C_2 H^{1 + \alpha}$.
The {\em a priori} PUM estimates in Theorem~\ref{thm:pum} then become:
\begin{eqnarray*}
\|u-u_{pp}\|_{L^2(\Omega)}
   &\le& \sqrt{M} C_{\infty} \left(
           \sum_i (C_1 h^{\alpha} + C_2 H^{1+\alpha})^2 \right)^{1/2},
\\
\|\nabla(u-u_{pp})\|_{L^2(\Omega)}
   &\le& \sqrt{2M} 
\nonumber
\end{eqnarray*}
$$
\cdot
\left( \left[
     \sum_i \left( \frac{C_G}{\text{diam}(\Omega_i)} \right)^2
     + C_{\infty}^2 
      \right]
         (C_1 h^{\alpha} + C_2 H^{1+\alpha})^2
         \right)^{1/2}.
$$
If $P = $ number of local spaces $V_i$,
and if $\text{diam}(\Omega_i) \ge 1/Q > 0 ~~\forall i$, this is simply:
\begin{eqnarray*}
\|u-u_{pp}\|_{L^2(\Omega)}
   &\le& \sqrt{PM} C_{\infty} 
         \left( C_1 h^{\alpha} + C_2 H^{1+\alpha} \right), \\
\|\nabla(u-u_{pp})\|_{L^2(\Omega)}
   &\le& \sqrt{2PM (Q^2 C_G^2 + C_{\infty}^2)}
         \left( C_1 h^{\alpha} + C_2 H^{1+\alpha} \right).
\end{eqnarray*}
If $H \le h^{\alpha/(1+\alpha)}$ then $u_{pp}$ from PPUM is
asymptotically as good as a global Galerkin solution when
the error is measured in the $H^1$-norm.\qed
\end{proof}
Estimates similar to Theorem~\ref{thm:bh} appear in~\cite{XuZh97} for
a variety of related parallel algorithms.
Note that improving the estimates in the $L^2$-norm is not possible;
the required local estimates simply do not hold.
Improving the solution quality in the $L^2$-norm would require
more global information.

%%%%%%%%%%%%%%%%%%%%%%%%%%%%%%%%%%%%%%%
\subsection[Availability of MC and the supporting tools]
           {Availability of MC and the supporting tools MALOC and SG}

\MC\ is built on top of a low-level portability library called
\MALOC\ (Minimal Abstraction Layer for Object-oriented C).
Most of the images appearing in this paper were produced using a software
tool called \SG\ (Socket Graphics), which is also built on top of \MALOC.
\MALOC, \MC, and \SG\ were developed by the author over several years,
with generous contributions from a number of colleagues.
\MALOC, \MC, and \SG\ are freely redistributable under the
GNU General Public License (GPL), and the source code for all
three packages is freely available at the following website:
\begin{center}
{\tt http://www.scicomp.ucsd.edu/\~{}mholst/}
\end{center}
\MALOC, \MC, and \SG,
as well as a fully functional MATLAB version of \MC\ called \MCLAB,
are part of a larger project called \FETK\ (The Finite Element Toolkit).
Information about \FETK\ can be found at:
\begin{center}
{\tt http://www.fetk.org}
\end{center}

%%%%%%%%%%%%%%%%%%%%%%%%%%%%%%%%%%%%%%%%%%%%%%%%%%%%%%%%%%%%%%%%%%%%%%%%%%%%%%
% _SECTION_3_
%%%%%%%%%%%%%%%%%%%%%%%%%%%%%%%%%%%%%%%%%%%%%%%%%%%%%%%%%%%%%%%%%%%%%%%%%%%%%%
\section[Example: The constraints in the Einstein equations]
        {Example: The Hamiltonian and momentum constraints
         in the Einstein equations}
  \label{sec:constraints}

The evolution of the gravitational field was conjectured by Einstein
to be governed by twelve coupled first-order hyperbolic equations for the
metric of space-time and its time derivative, where the evolution is
constrained for all time by a coupled four-component elliptic system.
This four-component elliptic system consists of a nonlinear scalar
{\em Hamiltonian constraint}, and a linear 3-vector {\em momentum}
constraint.
The evolution and constraint equations, similar in some respects to
Maxwell's equations, are collectively referred to as
the {\em Einstein equations}.
Solving the constraint equations numerically, separately or together
with the evolution equations, is currently of great interest
to the physics community (cf.~\cite{HoBe01,HoBe99,BeHo99} for more detailed
discussions of this application).

The Hamiltonian and momentum constraints in the Einstein equations,
taken separately or together as a coupled system, have the form
(\ref{eqn:str1})--(\ref{eqn:str3}).
Allowing for both Dirichlet and Robin boundary conditions as are
typically used in black hole and neutron star models
(cf.~\cite{HoBe01,HoBe99,BeHo99}), the strong form can be written as:
\begin{eqnarray}
\hat{\Delta}\phi & = & \frac{1}{8} \hat{R} \phi
     + \frac{1}{12} ({\rm tr} K)^2 \phi^5
\label{eqn:ham_str1} \\
& &
     - \frac{1}{8} (\Ahatstar_{ab} + (\hat{L}W)_{ab})^2 \phi^{-7}
     - 2 \pi \hat{\rho} \phi^{-3} ~~\text{in}~\calg{M},
\nonumber \\
\hat{n}_a \hat{D}^a \phi + c \phi
     & = & z ~\text{on}~\partial_1 \calg{M}, \label{eqn:ham_str2} \\
\phi & = & f ~\text{on}~\partial_0 \calg{M}, \label{eqn:ham_str3} \\
  \hat{D}_b(\hat{L}W)^{ab} & = & \frac{2}{3} \phi^6 \hat{D}^a {\rm tr}K 
                             + 8 \pi \hat{j}^a
      ~~\text{in}~\calg{M}, \label{eqn:mom_str1} \\
(\hat{L}W)^{ab} \hat{n}_b + C^a_{~b} W^b
    & = & Z^a ~\text{on}~\partial_1 \calg{M}, \label{eqn:mom_str2} \\
  W^a & = & F^a ~~\text{on}~\partial_0 \calg{M},  \label{eqn:mom_str3}
\end{eqnarray}
where the following standard notation has been employed:
\begin{eqnarray*}
\hat{\Delta}\phi & = & \hat{D}_a \hat{D}^a \phi, \\
(\hat{L}W)^{ab} & = & \hat{D}^a W^b + \hat{D}^b W^a 
                - \frac{2}{3} \hat{\gamma}^{ab} \hat{D}_c W^c, \\
{\rm tr} K & = & \gamma^{ab}K_{ab}, \\
(C_{ab})^2 & = & C^{ab}C_{ab}.
\end{eqnarray*}
The symbols in the equations ($\hat{R}$, $K$, $\Ahatstar_{ab}$,
 $\hat{\rho}$, $\hat{j}^a$, $z$, $Z^a$, $f$, $F^a$, $c$, and $C^a_b$)
represent various physical parameters, and are described in detail
in~\cite{HoBe01,HoBe99,BeHo99} and the referenences therein.

Equations (\ref{eqn:ham_str1})--(\ref{eqn:mom_str3})
are known to be well-posed only for restricted problem data and
manifold topologies~\cite{MuYo73,MuYo74a,MuYo74b}.
Below we will present two well-posedness results from~\cite{HoBe01}
which hold under certain assumptions.
Note that if multiple solutions in the form of folds or
bifurcations are present in solutions of
(\ref{eqn:ham_str1})--(\ref{eqn:mom_str3}) then 
path-following numerical methods will be required for numerical
solution~\cite{Kell87,Kell92}.

%%%%%%%%%%%%%%%%%%%%%%%%%%%%%%%%%%%%%%%%%%%%%%%%%%%%%%%%%%%%%%%%%%%%
\subsection{Weak formulation, linearization, and well-posedness}
   \label{sec:weakForm_gr}

Both the Hamiltonian constraint (\ref{eqn:ham_str1}) and the
momentum constraint (\ref{eqn:mom_str1}), taken separately or as a system,
fall into the class of
second-order divergence-form elliptic systems of tensor
equations in~(\ref{eqn:str1})--(\ref{eqn:str3}).
Derivation of the weak formulation
produces a weak system of the form~(\ref{eqn:weak})--(\ref{eqn:weakForm}),
with some interesting twists along the way described in~\cite{HoBe01}.
Following the notation in~\cite{HoBe01}, we employ a background
(or {\em conformal})
metric $\hat{\gamma}_{ab}$ to define the volume element
$dx = \sqrt{\text{det}~\hat{\gamma}_{ab}}~dx^1 dx^2 dx^3$
and the corresponding boundary volume element $ds$,
and for use as the manifold connection for covariant differentiation.
The notation for covariant differentiation using the conformal connection
will be denoted $\hat{D}_a$ to be consistent with the relativity literature,
and the various quantities from Section~\ref{sec:elliptic_general}
will now be hatted to denote use of this conformal metric.
For example, the unit normal to $\partial \calg{M}$ will now
be denoted $\hat{n}^a$.

Ordering the Hamiltonian constraint first in the 
system~(\ref{eqn:str1}), and defining the product metric
$\calg{G}_{ij}$ and the vectors $u^i$ and $v^j$ appearing
in~(\ref{eqn:product_metric}) and~(\ref{eqn:weakForm}) as:
$$
\calg{G}_{ij} = \left[ \begin{array}{ccc}
         1 & 0      \\
         0 & g_{ab} \\
         \end{array} \right],
~~~~~
u^i = \left[ \begin{array}{c}
         \phi \\
         W^a  \\
         \end{array} \right],
~~~~~
v^j = \left[ \begin{array}{c}
         \psi \\
         V^b  \\
         \end{array} \right],
$$
it is shown in~\cite{HoBe01} that the coupled Hamiltonian and momentum
constraints have a coupled weak formulation in the form of~(\ref{eqn:weak}),
where the form definition is as follows:
\begin{equation}
   \label{eqn:weakHamMom}
\langle F(u),v \rangle
= \langle F([\phi,W^a]),[\psi,V^a] \rangle
= \langle F_{\text{H}}(\phi),\psi \rangle
+ \langle F_{\text{M}}(W^a),V^a \rangle.
\end{equation}
The individual Hamiltonian form is shown in~\cite{HoBe01} to be:
\begin{equation}
   \label{eqn:ham_weak}
\langle F_{\text{H}}(\phi),\psi \rangle =
  \int_{\calg{M}} \hat{D}_a \phi \hat{D}^a \psi ~dx
+ \int_{\calg{M}} P'(\phi) \psi ~dx
\end{equation}
$$
+ \int_{\partial_1 \calg{M}} (c \phi - z) \psi ~ds,
$$
where
\begin{equation}
  \label{eqn:dp_def}
P'(\phi) = \frac{1}{8} \hat{R} \phi
         + \frac{1}{12} ({\rm tr} K)^2 \phi^5
         - \frac{1}{8} (\Ahatstar_{ab} + (\hat{L}W)_{ab})^2 \phi^{-7}
         - 2 \pi \hat{\rho} \phi^{-3},
\end{equation}
and the momentum form is shown in~\cite{HoBe01} to be:
\begin{equation}
   \label{eqn:mom_weak}
\langle F_{\text{M}}(W^a),V^a \rangle =
  \int_{\calg{M}}
  \left( 2 \mu (\hat{E}W)^{ab} (\hat{E}V)_{ab}
  + \lambda \hat{D}_a W^a \hat{D}_b V^b \right)
 ~dx
\end{equation}
$$
+ \int_{\calg{M}} 
    \left(
      \frac{2}{3} \phi^6 \hat{D}^a {\rm tr}K
    + 8 \pi \hat{j}^a
    \right) V_a ~dx
  \int_{\partial_1 \calg{M}} (C^a_{~b} W^b - Z^a) V_a ~ds,
$$
where $\mu=1$, $\lambda=-2/3$,
and where the {\em deformation tensor} $(\hat{E}V)^{ab}$ is the
symmetrized gradient:
\begin{equation}
   \label{eqn:deformation_tensor}
(\hat{E}V)^{ab} = \frac{1}{2} \left( \hat{D}^b V^a + \hat{D}^a V^b \right).
\end{equation}

The Gateaux-derivative of the nonlinear weak form
$\langle F(\cdot),\cdot \rangle$ in equation~(\ref{eqn:weakHamMom}) above
is needed for use in Newton-like iterative solution methods such as
Algorithm~\ref{alg:newton}.
Defining an arbitrary variation direction $w=[\xi,X^a]$, it is shown
in~\cite{HoBe01} that the Gateaux-derivative takes the following form
(for fixed $[\phi,W^a]$), linear separately in each of the variables
$[\xi,X^a]$ and $[\psi,V^a]$:
\begin{equation}
   \label{eqn:linearized_constraints}
\langle DF([\phi,W^a])[\xi,X^a],[\psi,V^a] \rangle
   = 
\end{equation}
$$
  \int_{\partial_1 \calg{M}}
           \left( c \xi \psi + C^a_{~b} X^b V_a
           \right) ~ds
$$
$$
   + \int_{\calg{M}}
       \left( \hat{D}_a \xi \hat{D}^a \psi
        + 2 \mu (\hat{E}X)^{ab} (\hat{E}V)_{ab}
       + \lambda \hat{D}_a X^a \hat{D}_b V^b
           \right)~dx
$$
$$
   + \int_{\calg{M}}
             \left(
             \frac{1}{8} \hat{R}
           + \frac{5}{12} ({\rm tr} K)^2 \phi^4
           + \frac{7}{8} (\Ahatstar_{ab} + (\hat{L}W)_{ab})^2 \phi^{-8}
           + 6 \pi \hat{\rho} \phi^{-4}
          \right) \xi \psi ~dx
$$
$$
   - \int_{\calg{M}}
     \left(
     \frac{1}{4} (\Ahatstar_{ab} + (\hat{L}W)_{ab}) \phi^{-7} 
     \right) 
       (\hat{L}X)^{ab} \psi ~dx
     + \int_{\calg{M}}
       \left(
           4 \phi^5 \hat{D}^a {\rm tr}K
         \right) V_a \xi ~dx.
$$
Now that the nonlinear weak 
form $\langle F(\cdot),\cdot \rangle$ and the associated bilinear
linearization form $\langle DF(\cdot)\cdot,\cdot \rangle$
are defined and can be evaluated
using numerical quadrature, the assembly of the nonlinear residual as well as
linearizations about any point can be performed precisely as outlined
above for a generic nonlinear finite element method.
Again, once we have the weak formulation and a linearization, the
discretization in \MC\ is automatic and generic.
Although the forms $\langle F(\cdot),\cdot \rangle$
and $\langle DF(\cdot)\cdot,\cdot \rangle$ above
appear somewhat complicated in the case of the constraint equations, the
discretization in \MC\ involves simply evaluating the integrands for use in
quadrature formulae.

We now state two new existence and uniqueness results for the
Hamiltonian and momentum constraints which were established recently
in~\cite{HoBe01}.
A number of assumptions on the problem data are required.
\begin{assumption}
   \label{ass:mom_data}
We assume that $\calg{M}$ is a connected compact Riemannian $3$-manifold with
Lipschitz-continuous boundary $\partial \calg{M}$.
We also assume that the data has the following properties:
$$
\hat{\gamma}_{ab} \in W^{1,\infty}(\calg{M}),
~~
K^{ab} \in W^{1,6/5}(\calg{M}),
~~
\phi \in L^{\infty}(\calg{M}),
~~
\overline{W}^a \in H^1(\calg{M}),
$$
$$
\hat{j}^a \in H^{-1}(\calg{M}),
~~
C^a_{~b} \in L^2(\partial_1 \calg{M}),
~~
Z^a \in L^{4/3}(\partial_1 \calg{M}),
~~
F^a \in H^{1/2}(\partial_0 \calg{M}),
$$
where $\overline{W}^a|_{\partial_0 \calg{M}} = F^a$ in the trace sense,
and where for some constant $\sigma > 0$,
$$
\int_{\partial_1 \calg{M}}
   C^a_{~b} V^b V_a ~dx \ge \sigma \| V^a \|_{L^2(\partial_1 \calg{M})}^2,
       ~~\forall~ V^a \in L^4(\partial_1 \calg{M}).
$$
\end{assumption}
\begin{assumption}
   \label{ass:ham_data}
We assume that $\calg{M}$ is a connected compact Riemannian $3$-manifold with
Lipschitz-continuous boundary $\partial \calg{M}$,
where $\text{meas}(\partial_0 \calg{M}) > 0$.
We also assume that the data has the following properties:
$$
\hat{\gamma}_{ab} \in W^{1,\infty}(\calg{M}),
~~
K^{ab} \in L^{\infty}(\calg{M}),
~~
W^a \in W^{1,\infty}(\calg{M}),
~~
\overline{\phi} \in H^1(\calg{M}) \cap L^{\infty}(\calg{M}),
$$
$$
\hat{R}, \Ahatstar_{ab}, \rho \in L^{\infty}(\calg{M}),
~~
c,z \in L^{\infty}(\partial_1 \calg{M}),
~~
f \in H^{1/2}(\partial_0 \calg{M}) \cap L^{\infty}(\partial_0 \calg{M}),
$$
where $\bar{\phi}|_{\partial_0 \calg{M}} = f$ in the trace sense, and
$$
0 < \inf_{\partial_0 \calg{M}} f
  \le f
  \le \sup_{\partial_0 \calg{M}} f
  < \infty,
~~~~a.e.~\text{in}~ \partial_0 \calg{M},
$$
$$
\rho \ge 0,
~~a.e.~\text{in}~ \calg{M},
~~c \ge 0,
~~ z = 0,
 ~~a.e.~\text{on}~ \partial_1 \calg{M}. 
$$
\end{assumption}
\begin{theorem}
   \label{thm:mom_continuous}
Let Assumption~\ref{ass:mom_data} hold.
Then there exists a unique solution
$W^a \in \overline{W}^a + H^1_{0,D}(\calg{M})$
to the momentum constraint equation
(\ref{eqn:mom_str1})--(\ref{eqn:mom_str3})
which depends continuously on the problem data.
Moreover, $U^a = W^a - \overline{W}^a \in H^1_{0,D}(\calg{M})$
satisfies the following {\em a priori} bound:
\begin{equation}
    \label{eqn:apriori_bound_coercive}
\| U^a \|_{H^1(\calg{M})}
   \le  \| U^a \|_{L^2(\calg{M})}
       + \frac{L}{\alpha},
\end{equation}
where $\alpha$ is the strong ellipticity constant
and $L$ is a bound on the linear functional arising in the weak
form.
If $\text{meas}(\partial_0 \calg{M}) > 0$, then the following
bound also holds:
\begin{equation}
    \label{eqn:apriori_bound_elliptic}
\| U^a \|_{H^1(\calg{M})}
   \le \frac{L}{m},
\end{equation}
where $m$ is the coercivity constant.
\end{theorem}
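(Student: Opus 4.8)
The plan is to treat the momentum constraint~(\ref{eqn:mom_str1})--(\ref{eqn:mom_str3}) as the \emph{linear} elliptic problem it is and solve it by the Lax--Milgram theorem applied to the weak form~(\ref{eqn:mom_weak}). First I would make the affine shift $W^a = \overline{W}^a + U^a$ with $U^a$ in the Hilbert space $\calg{B} := H^1_{0,D}(\calg{M})$ (the $n_v=1$, $d=3$ instance of~(\ref{eqn:banach1})), reducing the problem to: find $U^a \in \calg{B}$ with $B(U,V) = \ell(V)$ for all $V^a \in \calg{B}$, where
\[
B(U,V) = \int_{\calg{M}}\!\Bigl(2(\hat{E}U)^{ab}(\hat{E}V)_{ab} - \tfrac23\hat{D}_aU^a\,\hat{D}_bV^b\Bigr)dx + \int_{\partial_1\calg{M}}\! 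C^a_{~b}U^bV_a\,ds,
\]
and $\ell(V)$ collects the remaining terms of~(\ref{eqn:mom_weak}) together with $-B(\overline{W},V)$. The algebraic observation that makes coercivity tractable is that $\hat\gamma^{ab}(\hat{L}V)_{ab} = 0$, whence $2(\hat{E}U)^{ab}(\hat{E}V)_{ab} - \tfrac23\hat{D}_aU^a\hat{D}_bV^b = \tfrac12(\hat{L}U)^{ab}(\hat{L}V)_{ab}$; i.e.\ the principal part of $B$ is $\tfrac12\int_{\calg{M}}(\hat{L}U)^{ab}(\hat{L}V)_{ab}\,dx$, built from the conformal Killing (York) operator $\hat{L}$ of~(\ref{eqn:deformation_tensor}).

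Next I would check the two Lax--Milgram hypotheses. Boundedness of $B$: the volume term is bounded by $\|U\|_{H^1}\|V\|_{H^1}$ directly, and the boundary term by H\"older together with the trace embedding $H^1(\calg{M})\hookrightarrow H^{1/2}(\partial\calg{M})\hookrightarrow L^4(\partial\calg{M})$ (valid for a Lipschitz $3$-manifold) and $C^a_{~b}\in L^2(\partial_1\calg{M})$. Boundedness of $\ell$: one uses $\phi\in L^\infty(\calg{M})$, $\hat{D}K^{ab}\in L^{6/5}(\calg{M})\hookrightarrow H^{-1}(\calg{M})$ (since $H^1\hookrightarrow L^6$ in dimension $3$), $\hat{j}^a\in H^{-1}(\calg{M})$, $Z^a\in L^{4/3}(\partial_1\calg{M}) = \bigl(L^4(\partial_1\calg{M})\bigr)^*$, and $\overline{W}^a\in H^1(\calg{M})$, yielding $\|\ell\|_{\calg{B}^*}\le L$ with $L$ depending linearly on the problem data. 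The Sobolev and trace inequalities on Riemannian $3$-manifolds invoked here are exactly those discussed, and assumed available, earlier in the paper.

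Coercivity is the heart of the argument and the step I expect to be the main obstacle. Because $B(U,U) = \tfrac12\|\hat{L}U\|_{L^2(\calg{M})}^2 + \int_{\partial_1\calg{M}}C^a_{~b}U^bU_a\,ds \ge \tfrac12\|\hat{L}U\|_{L^2(\calg{M})}^2$, and because the principal symbol of $\hat{L}^*\hat{L}$ is strongly elliptic, G\aa rding's inequality gives $\alpha\|U\|_{H^1(\calg{M})}^2 \le \tfrac12\|\hat{L}U\|_{L^2(\calg{M})}^2 + \alpha\|U\|_{L^2(\calg{M})}^2 \le B(U,U) + \alpha\|U\|_{L^2(\calg{M})}^2$; combining with $B(U,U) = \ell(U) \le L\|U\|_{H^1(\calg{M})}$ and dividing by $\|U\|_{H^1(\calg{M})}$ yields the a priori bound~(\ref{eqn:apriori_bound_coercive}) (using $\|U\|_{L^2}\le\|U\|_{H^1}$). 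To upgrade this to genuine coercivity I would invoke a generalized Korn inequality for the conformal Killing operator: there is $c>0$ with $\|\hat{L}U\|_{L^2(\calg{M})}^2 + \|U\|_{L^2(\partial\calg{M})}^2 \ge c\|U\|_{H^1(\calg{M})}^2$ for all $U^a\in H^1(\calg{M})$, the underlying fact being that a conformal Killing field of $(\calg{M},\hat{\gamma})$ is controlled by a finite-dimensional jet and hence cannot vanish on a subset of $\partial\calg{M}$ of positive measure without vanishing identically. When $\mathrm{meas}(\partial_0\calg{M}) > 0$ the Dirichlet constraint itself rules out nontrivial conformal Killing fields in $\calg{B}$, so one gets directly $\tfrac12\|\hat{L}U\|_{L^2(\calg{M})}^2 \ge m\|U\|_{H^1(\calg{M})}^2$; otherwise, since then $\partial_1\calg{M}=\partial\calg{M}$ up to a null set, one absorbs the $\|U\|_{L^2(\partial\calg{M})}^2$ term using the positivity hypothesis $\int_{\partial_1\calg{M}}C^a_{~b}U^bU_a\,ds\ge\sigma\|U^a\|_{L^2(\partial_1\calg{M})}^2$ of Assumption~\ref{ass:mom_data}. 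In either case $B(U,U)\ge m\|U\|_{H^1(\calg{M})}^2$ for some $m>0$.

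With boundedness and coercivity established, Lax--Milgram produces a unique $U^a\in H^1_{0,D}(\calg{M})$, hence a unique solution $W^a = \overline{W}^a + U^a$; the coercivity estimate gives $\|U\|_{H^1(\calg{M})} \le \|\ell\|_{\calg{B}^*}/m \le L/m$, which is~(\ref{eqn:apriori_bound_elliptic}), and since $\ell$ depends linearly and boundedly on the data $(\hat{j}^a, Z^a, \overline{W}^a, \phi, K^{ab}, C^a_{~b})$, the solution map is Lipschitz, giving the asserted continuous dependence. The only genuinely delicate point is the conformal-Korn / unique-continuation estimate on a Lipschitz Riemannian $3$-manifold; everything else is a routine Lax--Milgram verification once the manifold Sobolev and trace machinery is granted.
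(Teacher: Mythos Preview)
Your approach is correct in spirit but takes a different route from the paper. The paper (citing \cite{HoBe01}) proceeds via the \emph{Riesz--Schauder (Fredholm) alternative}: one first establishes that the bilinear form is strongly elliptic and satisfies only a G{\aa}rding inequality, so that the associated operator is a compact perturbation of an isomorphism and hence Fredholm of index zero; existence then follows once uniqueness is shown. Your proposal instead aims for full $V$-ellipticity via a boundary conformal-Korn inequality and then invokes Lax--Milgram directly.

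The key practical difference is that the paper does \emph{not} claim coercivity in the pure Robin case. Indeed, in Section~\ref{sec:mom_approx} it is stated explicitly that ``the momentum constraint weak form is only V-coercive (satisfying a G{\aa}rding inequality)'' and that in the Robin case the G{\aa}rding shift satisfies $1\le K\le 4/3$; this is precisely why the theorem restricts the coercive bound~(\ref{eqn:apriori_bound_elliptic}) to the case $\mathrm{meas}(\partial_0\calg{M})>0$. Your argument would yield~(\ref{eqn:apriori_bound_elliptic}) in both cases, which is stronger than what the theorem asserts and stronger than what \cite{HoBe01} apparently proves. The extra strength comes from your boundary Korn inequality $\|\hat{L}U\|_{L^2(\calg{M})}^2 + \|U\|_{L^2(\partial\calg{M})}^2 \ge c\|U\|_{H^1(\calg{M})}^2$, whose proof requires a unique-continuation statement for conformal Killing fields from a boundary hypersurface. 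With the low-regularity metric $\hat{\gamma}_{ab}\in W^{1,\infty}(\calg{M})$ of Assumption~\ref{ass:mom_data}, this unique-continuation step is genuinely delicate (the second-order operator $\hat{L}^*\hat{L}$ has only $L^\infty$ coefficients), and the paper's Riesz--Schauder route sidesteps any need for the quantitative Korn estimate---though it still needs the qualitative injectivity statement, which relies on the same underlying fact.

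In summary: your derivation of~(\ref{eqn:apriori_bound_coercive}) via G{\aa}rding matches the paper, and your treatment of the Dirichlet case is fine. But in the pure Robin case you are attempting to prove more than the theorem claims, and the boundary conformal-Korn inequality you need is the nontrivial step the paper avoids by using the Fredholm alternative instead of Lax--Milgram.
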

\begin{proof}
The proof given in~\cite{HoBe01} is based on the use
of a Riesz-Schauder alternative argument (uniqueness implies existence),
which is accessible after establishing that the momentum weak form
operator has a number of properties, including strong ellipticity and
satisfaction of a G{\aa}rding inequality.\qed
\end{proof}
\begin{theorem}
   \label{thm:ham_continuous}
Let Assumption~\ref{ass:ham_data} hold.
Then there exists a unique solution
$\phi \in \bar{\phi} + H^1_{0,D}(\calg{M})$
to the Hamiltonian constraint equation
(\ref{eqn:ham_str1})--(\ref{eqn:ham_str3}).
The solution $\phi$ satisfies {\em a priori} $L^\infty$-bounds
and is strictly positive {\em a.e.} in $\calg{M}$.
\end{theorem}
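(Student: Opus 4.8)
\quad
The plan is to treat the Hamiltonian constraint \eqref{eqn:ham_str1}--\eqref{eqn:ham_str3} as a \emph{scalar} semilinear problem for the conformal factor $\phi$: under Assumption~\ref{ass:ham_data} the vector field $W^a$ enters only through the fixed datum $(\hat{L}W)_{ab}\in L^\infty(\calg{M})$, so $(\Ahatstar_{ab}+(\hat{L}W)_{ab})^2\in L^\infty(\calg{M})$, and the equation is of the form $-\hat{\Delta}\phi+P'(\phi)=0$ with $P'$ as in \eqref{eqn:dp_def}, mixed Dirichlet--Robin data, and the sign conditions $\rho\ge0$, $c\ge0$, $z=0$. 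First I would pass to the affine-shifted unknown $\phi=\bar{\phi}+u$ with $u\in H^1_{0,D}(\calg{M})$, so that we seek $u$ with $\langle F_{\text{H}}(\bar{\phi}+u),\psi\rangle=0$ for all $\psi\in H^1_{0,D}(\calg{M})$, the form being that of \eqref{eqn:ham_weak}. Existence together with the stated pointwise bounds would then be obtained by the method of sub- and super-solutions combined with a monotone iteration, and uniqueness by a separate comparison argument.

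The structural fact that drives everything is the sign pattern of $P'$ in \eqref{eqn:dp_def}: the term $\tfrac{1}{12}({\rm tr}K)^2\phi^5\ge0$ grows, the terms $-\tfrac18(\Ahatstar_{ab}+(\hat{L}W)_{ab})^2\phi^{-7}$ and $-2\pi\hat{\rho}\phi^{-3}$ are nonpositive and blow up as $\phi\downarrow0$, and $\tfrac18\hat{R}\phi$ is linear with a bounded (sign-indefinite) coefficient. Consequently a sufficiently small positive constant $\phi_-$ has $P'(\phi_-)\le0$ a.e.\ in $\calg{M}$ and is, after a harmless lower-order adjustment making the Robin inequality at $\partial_1\calg{M}$ come out with the correct sign (this is where $c\ge0$, $z=0$ are used), a global sub-solution compatible with $\phi_-\le\inf_{\partial_0\calg{M}}f$; a global super-solution $\phi_+\ge\phi_-$ with $\phi_+\ge\sup_{\partial_0\calg{M}}f$ is produced similarly — as a large constant when $\hat{R}\ge0$, and in general by solving an auxiliary linear problem or invoking a first-eigenvalue argument. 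With $0<\phi_-\le\phi_+$ fixed I would choose $k>0$ so large that $t\mapsto P'(t)+kt$ is nondecreasing on $[\phi_-,\phi_+]$, set $\phi_0=\phi_+$, and let $\phi_{n+1}\in\bar{\phi}+H^1_{0,D}(\calg{M})$ be the unique solution of the linear problem $-\hat{\Delta}\phi_{n+1}+k\phi_{n+1}=k\phi_n-P'(\phi_n)$ in $\calg{M}$, $\hat{n}_a\hat{D}^a\phi_{n+1}+c\phi_{n+1}=0$ on $\partial_1\calg{M}$, $\phi_{n+1}=f$ on $\partial_0\calg{M}$; this is well posed since the associated bilinear form $\int_{\calg{M}}(\hat{D}_a w\,\hat{D}^a v+kwv)\,dx+\int_{\partial_1\calg{M}}cwv\,ds$ is bounded and coercive on $H^1_{0,D}(\calg{M})$ (coercivity from $k>0$, $c\ge0$; $\text{meas}(\partial_0\calg{M})>0$ also suffices). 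A maximum-principle induction then gives $\phi_-\le\phi_{n+1}\le\phi_n\le\phi_+$ a.e., so the monotone $L^\infty$-bounded sequence $\{\phi_n\}$ is bounded in $H^1(\calg{M})$ (test the $n$-th equation with $\phi_n-\bar{\phi}$) and converges, weakly in $H^1$ and strongly in $L^2$, to a weak solution $\phi$ of \eqref{eqn:ham_str1}--\eqref{eqn:ham_str3} with $0<\phi_-\le\phi\le\phi_+$ a.e. This single inclusion yields simultaneously the a priori $L^\infty$-bounds and the strict positivity a.e.\ in $\calg{M}$.

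For uniqueness I would compare two positive weak solutions $\phi_1,\phi_2$ by writing $P'(\phi)=\phi\,q(\phi)$ with $q(\phi)=\tfrac18\hat{R}+\tfrac{1}{12}({\rm tr}K)^2\phi^4-\tfrac18(\Ahatstar_{ab}+(\hat{L}W)_{ab})^2\phi^{-8}-2\pi\hat{\rho}\phi^{-4}$, which is pointwise nondecreasing in $\phi>0$. Testing the difference of the two weak formulations against $w=(\phi_1-\phi_2)_+\in H^1_{0,D}(\calg{M})$, using $\phi_1 q(\phi_1)-\phi_2 q(\phi_2)\ge\phi_1\bigl(q(\phi_1)-q(\phi_2)\bigr)$ on $\{\phi_1>\phi_2\}$, then substituting $q(\phi_2)$ via the equation satisfied by $\phi_2$ (an integration by parts against $w^2/\phi_2$, whose boundary term is absorbed because $\hat{n}_a\hat{D}^a\phi_2=-c\phi_2$ there), and finally a Cauchy--Schwarz/AM--GM estimate on the resulting gradient terms, forces $\int_{\calg{M}}\phi_1\bigl(q(\phi_1)-q(\phi_2)\bigr)w\,dx=0$ and hence $w\equiv0$; by symmetry $\phi_1=\phi_2$. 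The step I expect to be the real obstacle is the construction and rigorous verification of the barriers in a way compatible with the mixed Dirichlet--Robin data, and, in the uniqueness argument, the handling of the sign-indefinite term $\tfrac18\hat{R}$ (this is exactly why the substitute-the-equation-and-AM--GM device, rather than a bare maximum principle, is needed); both points, along with the precise function-space bookkeeping implied by Assumption~\ref{ass:ham_data}, are carried out in detail in~\cite{HoBe01}.
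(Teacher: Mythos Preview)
Your plan is essentially the same program the paper invokes: the paper's own proof is only a pointer to \cite{HoBe01}, summarized as ``variational analysis and fixed-point arguments, after using a weak maximum principle to remove the poles at the origin in the nonlinearity.'' Your barrier construction is the maximum-principle step that removes the poles, your monotone iteration is the fixed-point argument, and the monotonicity of $t\mapsto P'(t)+kt$ on $[\phi_-,\phi_+]$ is where the variational structure enters. Your uniqueness device---writing $P'(\phi)=\phi\,q(\phi)$ with $q$ pointwise nondecreasing in $\phi>0$, then testing against $(\phi_1-\phi_2)_+$---is more explicit than anything the paper states and is correct (one checks $q'(\phi)=\tfrac{1}{3}({\rm tr}K)^2\phi^3+(\Ahatstar_{ab}+(\hat{L}W)_{ab})^2\phi^{-9}+8\pi\hat{\rho}\phi^{-5}\ge 0$).

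One point deserves more care than ``harmless lower-order adjustment.'' A \emph{constant} $\phi_->0$ is not a weak sub-solution when $c>0$ on a set of positive measure in $\partial_1\calg{M}$: for $\psi\ge0$ concentrated near $\partial_1\calg{M}$ the boundary term $\int_{\partial_1\calg{M}}c\phi_-\psi\,ds>0$ cannot be dominated by $\int_{\calg{M}}P'(\phi_-)\psi\,dx$, since these live on sets of different dimension. The sign conditions $c\ge0$, $z=0$ help the \emph{super}-solution side (a large constant $\phi_+$ then does satisfy $c\phi_+\ge z$), but go the wrong way for the sub-solution. So your lower barrier genuinely has to be non-constant near $\partial_1\calg{M}$ (e.g.\ built from a positive solution of an auxiliary linear Robin problem), or the a~priori lower bound has to come from a direct weak-maximum-principle argument on the solution rather than from a constructed sub-solution. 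You already flag barrier construction under mixed boundary data as the real obstacle and defer to \cite{HoBe01}, which is exactly what the paper does; just be aware that the obstacle is not merely bookkeeping but an actual sign obstruction for constant lower barriers.
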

\begin{proof}
The proof given in~\cite{HoBe01} is based on variational analysis
and fixed-point arguments, after using a weak maximum principle to remove
the poles at the origin in the nonlinearity.
\qed
\end{proof}

The two results above indicate that the momentum and Hamiltonian
constraints on connected compact Riemannian manifolds with
Lipschitz boundaries
have well-posed weak formulations in the unweighted Sobolev
spaces $H^1_{0,D}(\calg{M})$.
A small amount of additional regularity,
namely the intersection of
Assumptions~\ref{ass:mom_data} and~\ref{ass:ham_data},
is required to give simultaneously well-posed weak formulations.
Under smoothness assumptions on the boundary and coefficients,
this minimal additional regularity can be shown for both $\phi$ and $W^a$
using elliptic regularity arguments (cf.~\cite{MaHu94} for a discussion
of the linear elasticity case which can be adapted here for the momentum
constraint).
Unfortunately, elliptic systems such as the momentum constraint do not
satisfy maximum principles analogous to the weak maximum principle
derived for the (scalar) Hamiltonian constraint in~\cite{HoBe01},
and as a result it is more
difficult to establish $L^{\infty}$-bounds on $W^a$.
Note that simultaneous well-posedness of the Hamiltonian and momentum
constraints individually does not imply well-posedness of the coupled system.
Limited results for the coupled system exist for some simplified situations;
cf.~\cite{IsMo96,Isen95,ChYo80,CIM92,MuYo73}.
Some new results for the coupled system,
based on Theorems~\ref{thm:mom_continuous} and~\ref{thm:ham_continuous},
appear in~\cite{HoBe01}.

%%%%%%%%%%%%%%%%%%%%%%%%%%%%%%%%%%%%%%%%%%%%%%%%%%%%%%%%%%%%%%%%%%%%%%%%%%%%%%
\subsection[Quasi-optimal {\em a priori} Galerkin error estimates]
           {Quasi-optimal {\em a priori} error estimates
            for Galerkin approximations}
  \label{sec:approx}

In this section we consider the theory for Galerkin approximations of
the Hamiltonian and momentum constraints.
Following the approaches in~\cite{Brez85,Scha74,JeKe91} for related problems,
we establish two abstract results, the first of which applies
to linear variational problems satisfying a G{\aa}rding inequality,
whereas the second result applies to monotonically
nonlinear variational problems.
When applied to the Hamiltonian and momentum constraints, each result will
take the form:
\begin{equation}
   \label{eqn:holst_approx}
\|u-u_h\|_{H^1(\calg{M})} \le C \inf_{v \in V_h} \|u-v\|_{H^1(\calg{M})},
\end{equation}
where $u_h$ is a Galerkin approximation such as provided by a finite element 
discretization, and where $V_h \subset H^1_{0,D}(\calg{M})$ is the subspace
of continuous piecewise polynomials defined over simplices.
These results are {\em quasi-optimal} in the sense that they imply that
a Galerkin solution of either the Hamiltonian or momentum constraint
is within a constant of being the best approximation in the particular
subspace in which the Galerkin solution lives.
After giving the two abstract results along with their simple short proofs,
we indicate how they can be applied to the momentum and Hamiltonian
constraints in the context of Galerkin finite element methods.

While the term on the left in~(\ref{eqn:holst_approx})
is in general difficult to analyze,
the term on the right represents the fundamental question addressed by
classical approximation theory in normed spaces, of which much is known.
To bound the term on the right from above,
one picks a function in $V_h$ which is
particularly easy to work with, namely a nodal or generalized interpolant
of $u$, and then one employs standard techniques
in interpolation theory.
Therefore, it is clear that the importance of approximation results
such as~(\ref{eqn:holst_approx}) are that they completely separate
the details of the momentum and Hamiltonian constraints from the
approximation theory, making available all known results
on finite element interpolation of functions in Sobolev spaces
(cf.~\cite{Ciar78}).
There are some additional difficulties in using the standard finite element
interpolation theory associated with the fact that we are working with a
domain with the structure of a Riemannian 3-manifold
rather than an open set in $\bbbb{R}^d$; these are being addressed in
work in progress~\cite{Hols97c}, and will not be discussed in detail here.

%%%%%%%%%%%%%%%%%%%%%%%%%%%%%%%%%%%%%%%%%%%%%%%%%%%%%%%%%%%%%%%%%%%%
\subsubsection{Approximation theory for the momentum constraint}
   \label{sec:mom_approx}

We now give a quasi-optimal {\em a priori} error estimate which characterizes
the quality of a Galerkin approximation to the solution of the
momentum constraint.
Quasi-optimal estimates are quite standard in the finite element approximation
theory literature for V-elliptic bilinear forms, but unfortunately it is
shown in~\cite{HoBe01} that the momentum constraint weak form is
only V-coercive (satisfying a G{\aa}rding inequality).
However, following Schatz~\cite{Scha74} we show this is sufficient to
establish similar quasi-optimal results for the momentum constraint
(cf.~\cite{ScWa00,XuZh97} for related results).

In order to derive such a result following the approach in~\cite{Scha74},
we begin with a Gelfand triple of Hilbert spaces
$V \subset H\equiv H^* \subset V^*$ with continuous embedding, meaning
that the pivot space $H$ and its dual space $H^*$ are identified through
the Riesz representation theorem,
and that the embedding $V \subset H$ is continuous.
A consequence of this is:
\begin{equation}
  \label{eqn:mom_as1}
\|u\|_H \le C \|u\|_V, ~~\forall u \in V,
\end{equation}
where we will assume that the embedding constant $C=1$
(the norm $\|\cdot\|_V$ can be redefined as necessary).
In our setting of the momentum constraint,
we have $H=L^2(\calg{M})$ and $V=H^1_{0,D}(\calg{M})$
generating the triple; we will stay with the abstract notation
involving $H$ and $V$ for clarity.
We are given the following variational problem:
\begin{equation}
  \label{eqn:mom_gal_cont}
\text{Find}~u \in V ~\text{s.t.}~
      A(u,v) = F(v), ~~\forall v \in V,
\end{equation}
where the bilinear form
$A(u,v) : V \times V \mapsto \bbbb{R}$ is bounded
\begin{equation}
  \label{eqn:mom_as2}
A(u,v) \le M \| u \|_V \| v \|_V,
      ~~\forall u,v \in V,
\end{equation}
and V-coercive (satisfying a G{\aa}rding inequality):
\begin{equation}
  \label{eqn:mom_as3}
m \|u\|_V^2 \le K \|u\|_H^2 + A(u,u),
      ~~\forall u \in V, ~~~\text{where}~~ m > 0,
\end{equation}
and where the linear functional $F(v) : V \mapsto \bbbb{R}$ is bounded
and thus lies in the dual space $V^*$:
$$
F(v) \le L \| v \|_V,
      ~~\forall v \in V.
$$
It is shown in~\cite{HoBe01} that the weak formulation of the
momentum constraint~(\ref{eqn:mom_weak})
fits into this framework; to simplify the discussion, we have
assumed that any Dirichlet function $\bar{u}$ has been absorbed
into the linear functional $F(v)$ in the obvious way.
Our discussion can be easily modified to include approximation
of $\bar{u}$ by $\bar{u}_h$.

Now, we are interested in the quality of a Galerkin approximation:
\begin{equation}
  \label{eqn:mom_gal_disc}
\text{Find}~u_h \in V_h \subset V~\text{s.t.}~
      A(u_h,v) = A(u,v) = F(v), ~~\forall v \in V_h \subset V.
\end{equation}
We will assume that there exists a sequence of approximation subspaces
$V_h \subset V$ parameterized by $h$, with $V_{h_1} \subset V_{h_2}$
when $h_2 < h_1$, and that there exists a sequence $\{a_h\}$, with
$\lim_{h\rightarrow 0} a_h = 0$, such that
\begin{equation}
  \label{eqn:mom_as4}
\|u-u_h\|_H \le a_h \|u-u_h\|_V, 
   ~\text{when}~ A(u-u_h,v) = 0, ~\forall v \in V_h \subset V.
\end{equation}
The assumption~(\ref{eqn:mom_as4}) is very natural;
in our setting, it is the assumption
that the error in the approximation converges to zero more quickly in the
$L^2$-norm than in the $H^1$-norm.
This is easily verified in the setting of piecewise polynomial approximation
spaces, under very mild smoothness requirements on the solution~$u$;
cf. Lemmas~2.1 and~2.2 in~\cite{XuZh97}.
Under these assumptions, we have the following {\em a priori} error estimate.
Although the assumptions are slightly different, the result and
the main idea for the simple proof we give below (included for completeness)
go back to Schatz~\cite{Scha74} (see also~\cite{ScWa00,XuZh97}).

\begin{theorem}
   \label{thm:mom_approx}
Let $V \subset H \subset V^*$ be a Gelfand triple of Hilbert spaces
with continuous embedding.
Assume that (\ref{eqn:mom_gal_cont}) is uniquely solvable, and that assumptions
(\ref{eqn:mom_as1}), (\ref{eqn:mom_as2}), (\ref{eqn:mom_as3}),
and (\ref{eqn:mom_as4}) hold.
Then for $h$ sufficiently small, there exists a unique approximation
$u_h$ satisfying~(\ref{eqn:mom_gal_disc}),
for which the following quasi-optimal
{\em a priori} error bounds hold:
\begin{eqnarray}
\|u-u_h\|_V 
   & \le & C \inf_{v \in V_h} \|u-v\|_V,
\label{eqn:schatz} \\
\|u-u_h\|_H 
   & \le & C a_h \inf_{v \in V_h} \|u-v\|_V,
\label{eqn:schatz_L2}
\end{eqnarray}
where $C$ is a constant independent of $h$.
If $K \le 0$ in~(\ref{eqn:mom_as3}), then the above holds for all $h$.
\end{theorem}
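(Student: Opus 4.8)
The plan is to run the classical argument of Schatz, which converts a G{\aa}rding inequality together with a duality (Aubin--Nitsche type) estimate into a genuine quasi-optimal bound once the mesh is fine enough. Concretely, I would proceed in three stages: (i) show the discrete problem (\ref{eqn:mom_gal_disc}) is uniquely solvable for $h$ small; (ii) prove the $V$-norm estimate (\ref{eqn:schatz}) by a perturbed C\'ea argument in which the indefinite lower-order term from (\ref{eqn:mom_as3}) is absorbed using (\ref{eqn:mom_as4}); and (iii) read off the $H$-norm estimate (\ref{eqn:schatz_L2}) by applying (\ref{eqn:mom_as4}) once more to the bound from (ii).

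For stage (i): since $V_h$ is finite dimensional, (\ref{eqn:mom_gal_disc}) is a square linear system, so it is enough to rule out a nontrivial $w_h \in V_h$ with $A(w_h,v) = 0$ for all $v \in V_h$. Such a $w_h$ is the Galerkin approximation of the zero solution, so $A(0 - w_h, v) = 0$ on $V_h$; hypothesis (\ref{eqn:mom_as4}) gives $\|w_h\|_H \le a_h\|w_h\|_V$, and the G{\aa}rding inequality (\ref{eqn:mom_as3}) applied to $w_h$ with $A(w_h,w_h) = 0$ forces $m\|w_h\|_V^2 \le K\|w_h\|_H^2 \le K a_h^2\|w_h\|_V^2$. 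Because $a_h \to 0$, for $h$ small enough that $K a_h^2 < m$ this yields $w_h = 0$; if $K \le 0$ it holds for all $h$. Uniqueness then gives existence.

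For stage (ii): put $e = u - u_h$, pick any $w \in V_h$, and split $e = (u-w) + (w-u_h)$ with $w - u_h \in V_h$. Galerkin orthogonality (obtained by subtracting (\ref{eqn:mom_gal_disc}) from (\ref{eqn:mom_gal_cont})) kills the term $A(e, w-u_h)$, so using (\ref{eqn:mom_as3}), then boundedness (\ref{eqn:mom_as2}), then the duality hypothesis (\ref{eqn:mom_as4}):
\[
m\|e\|_V^2 \le K\|e\|_H^2 + A(e,e) = K\|e\|_H^2 + A(e, u-w) \le K a_h^2 \|e\|_V^2 + M\|e\|_V\|u-w\|_V .
\]
Hence $(m - K a_h^2)\|e\|_V \le M\|u-w\|_V$; for $h$ small the factor $m - K a_h^2$ is bounded below by, say, $m/2$ (and by $m$ if $K \le 0$), so dividing and taking the infimum over $w \in V_h$ gives (\ref{eqn:schatz}) with a constant independent of $h$. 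Stage (iii) is then immediate: $\|e\|_H \le a_h\|e\|_V \le C a_h \inf_{v\in V_h}\|u-v\|_V$.

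The only genuinely delicate point is the absorption step in stage (ii): unlike the $V$-elliptic case, the term $K\|e\|_H^2$ has the wrong sign and cannot simply be discarded, so one must trade it for $K a_h^2\|e\|_V^2$ via (\ref{eqn:mom_as4}) and exploit $a_h \to 0$ --- this is exactly why ``$h$ sufficiently small'' appears in the hypothesis and why the unconditional statement requires $K \le 0$. I do not anticipate other obstacles: the abstract hypotheses (\ref{eqn:mom_as1})--(\ref{eqn:mom_as4}) are precisely the properties already established for the momentum weak form (\ref{eqn:mom_weak}) in the cited well-posedness analysis (boundedness, a G{\aa}rding inequality, and the faster $L^2$- than $H^1$-convergence gap for piecewise polynomials), so once the abstract theorem is in hand its application to the momentum constraint is essentially bookkeeping.
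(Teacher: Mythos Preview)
Your proposal is correct and follows essentially the same Schatz argument as the paper: combine G{\aa}rding (\ref{eqn:mom_as3}), Galerkin orthogonality, boundedness (\ref{eqn:mom_as2}), and the duality gap (\ref{eqn:mom_as4}) to absorb the indefinite $K\|e\|_H^2$ term, then read off uniqueness of the discrete problem and the $H$-norm estimate. The only cosmetic differences are that the paper derives the error inequality first and then specializes to $u=0$ for discrete uniqueness (you do these in the opposite order), and that the paper bounds $\|e\|_H^2$ by $a_h\|e\|_V^2$ using (\ref{eqn:mom_as1}) on one factor and (\ref{eqn:mom_as4}) on the other, whereas you use (\ref{eqn:mom_as4}) twice to get the slightly sharper $a_h^2\|e\|_V^2$; either way the absorption goes through once $h$ is small.
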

\begin{proof}
The following proof follows the idea in~\cite{Scha74}.
We begin with the G{\aa}rding inequality~(\ref{eqn:mom_as3})
and then employ~(\ref{eqn:mom_as2}):
\begin{eqnarray}
m \|u-u_h\|_V^2 - K \|u-u_h\|_H^2
&\le& A(u-u_h,u-u_h)
\nonumber \\
&=& A(u-u_h,u-v)
\nonumber \\
&\le& M \|u-u_h\|_V \|u-v\|_V,
   \label{eqn:schatz_tmp}
\end{eqnarray}
where we have used Galerkin orthogonality: $A(u-u_h,v)=0,
     ~\forall v \in V_h$,
to replace $u_h$ with an arbitrary $v \in V_h$.
Excluding first the case that $\|u-u_h\|_V=0$ we divide through by
$m \|u-u_h\|_V$ and employ~(\ref{eqn:mom_as1}) and~(\ref{eqn:mom_as4}),
giving $\forall v \in V_h$,
\begin{eqnarray}
\left( 1 - \frac{Ka_h}{m} \right) \|u-u_h\|_V
&\le& \|u-u_h\|_V - \frac{K\|u-u_h\|_H^2}{m\|u-u_h\|_V}
\nonumber \\
&\le& \frac{M}{m} \|u - v\|_V, 
  \label{eqn:schatz_core}
\end{eqnarray}
which we note also holds when $\|u-u_h\|=0$.

Assume first that $K > 0$.
Since $\lim_{h \rightarrow 0} a_h = 0$, there exists $\overline{h}$ such that
$a_h < m/K, ~\forall h \le \overline{h}$.
This implies $\forall v \in V_h$,
\begin{equation}
 \label{eqn:schatz_tmp3}
    \left( 1 - \frac{Ka_{\overline{h}}}{m} \right) \|u-u_h\|_V
\le \left( 1 - \frac{Ka_h}{m} \right) \|u-u_h\|_V
    \le \frac{M}{m} \|u - v\|_V.
\end{equation}
Taking $u=0$ in~(\ref{eqn:mom_gal_disc}) together
with $v=0$ in~(\ref{eqn:schatz_tmp3}), with $h  \le \overline{h}$,
implies that the homogeneous problem
$$
\text{Find}~ u_h \in V_h ~\text{s.t.}~
     A(u_h,v) = 0, \ \ \forall~ v \in V_h,
$$
has only the trivial solution, so that by the discrete Fredholm alternative
a solution $u_h$ to~(\ref{eqn:mom_gal_disc}) is unique and
therefore exists.
Equation~(\ref{eqn:schatz_tmp3}) then finally gives~(\ref{eqn:schatz})
whenever $h \le \overline{h}$, with the choice
$$
C = \frac{M}{m \left( 1 - \frac{Ka_{\overline{h}}}{m} \right)}
  = \frac{M}{m - Ka_{\overline{h}}}.
$$

Assume now that $K \le 0$.
Directly from~(\ref{eqn:schatz_core}) we can conclude~(\ref{eqn:schatz})
with $C = M/m$, which is completely independent of $n$;
this then becomes Cea's Lemma for V-elliptic forms~\cite{Ciar78}.
Moreover, the continuous and discrete problems are both uniquely solvable
due to V-ellipticity~(\ref{eqn:mom_as3}), independent of $h$.

In either case of $K > 0$ or $K \le 0$,
the second estimate (\ref{eqn:schatz_L2})
now follows immediately from assumption~(\ref{eqn:mom_as4}).\qed
\end{proof}

In the case of the momentum constraint it was established in~\cite{HoBe01}
that the assumptions required for Theorem~\ref{thm:mom_approx} hold,
with the exception of~(\ref{eqn:mom_as4}).
In the case of Robin boundary conditions, it was shown in~\cite{HoBe01}
that $1 \le \alpha = K \le 4/3$.
This gives
$$
C = \frac{M}{\alpha(1 - a_{\overline{h}})} \le \frac{M}{1 - a_{\overline{h}}}.
$$
Under the mild assumption that the {\em a priori}
bound~(\ref{eqn:apriori_bound_coercive}) or~(\ref{eqn:apriori_bound_elliptic})
can be shown to hold in a slightly stronger Sobolev norm, referred to as
an elliptic regularity estimate:
$$
\| W^a \|_{H^{1+s}(\calg{M})}
   \le \frac{L}{m},  ~~~~ s > 0,
$$
then it can be shown that~(\ref{eqn:mom_as4}) holds in the setting
of piecewise linear finite element spaces, with
$a_h = n^{-\gamma}$ for some $\gamma > 0$, where
$n = \dim( V_h )$.
This makes it clear that the requirement that $h$ be sufficiently small
is not a practical restriction on applying the finite element method to
the momentum constraint.

%%%%%%%%%%%%%%%%%%%%%%%%%%%%%%%%%%%%%%%%%%%%%%%%%%%%%%%%%%%%%%%%%%%%
\subsubsection{Approximation theory for the Hamiltonian constraint}
   \label{sec:ham_approx}

We consider now the nonlinear Hamiltonian constraint, and derive a
quasi-optimal {\em a priori} error estimate for Galerkin approximations
analogous to that derived in the previous section for the momentum constraint.
The approximation theory for Galerkin approximations
to the nonlinear Hamiltonian constraint~(\ref{eqn:ham_weak}) is
somewhat more complex than for the momentum constraint~(\ref{eqn:mom_weak}).
However, it is still possible to establish a result for the Hamiltonian
constraint which shows that a Galerkin approximation is quasi-optimal
under some weak assumptions on the nonlinearity.
A number of such estimates have appeared in the literature; the result we
derive below is similar to estimates in~\cite{Ciar78,Brez85,JeKe91}.

We begin again with a Gelfand triple of Hilbert
spaces $V \subset H\equiv H^* \subset V^*$ with continuous embedding, so that
again~(\ref{eqn:mom_as1}) holds.
We are given the following nonlinear variational problem:
\begin{equation}
  \label{eqn:ham_gal_cont}
\text{Find}~u \in V ~\text{s.t.}~
      A(u,v) + \langle B(u),v \rangle = F(v), ~~\forall v \in V,
\end{equation}
where the bilinear form
$A(u,v) : V \times V \mapsto \bbbb{R}$ is bounded
\begin{equation}
  \label{eqn:ham_as2}
A(u,v) \le M \| u \|_V \| v \|_V,
      ~~\forall u,v \in V,
\end{equation}
and V-elliptic:
\begin{equation}
  \label{eqn:ham_as3}
m \|u\|_V^2 \le A(u,u),
      ~~\forall u \in V, ~~~\text{where}~~ m > 0,
\end{equation}
where the linear functional $F(v) : V \mapsto \bbbb{R}$ is bounded
and thus lies in the dual space $V^*$:
$$
F(v) \le L \| v \|_V,
      ~~\forall v \in V,
$$
and where the nonlinear form
$\langle B(u),v \rangle : V \times V \mapsto \bbbb{R}$
is assumed to be monotonic:
\begin{equation}
  \label{eqn:ham_as3a}
0 \le \langle B(u)-B(v),u-v \rangle,
      ~~\forall u,v \in V,
\end{equation}
where we have used the notation:
\begin{equation}
\label{eqn:weakFormDiff}
  \langle B(u)-B(v),w \rangle = \langle B(u),w \rangle
                              - \langle B(v),w \rangle.
\end{equation}
We are interested in the quality of a Galerkin approximation:
\begin{equation}
  \label{eqn:ham_gal_disc}
\text{Find}~u_h \in V_h ~\text{s.t.}~
      A(u_h,v) + \langle B(u_h),v \rangle = F(v),
      ~~\forall v \in V_h,
\end{equation}
where $V_h \subset V$.
We will assume that $\langle B(u),v \rangle$
is bounded in the following weak sense:
If $u \in V$ satisfies~(\ref{eqn:ham_gal_cont}),
if $u_h \in V_h$ satisfies~(\ref{eqn:ham_gal_disc}),
and if $v \in V_h$, then there exists a constant $K>0$ such that:
\begin{equation}
  \label{eqn:ham_as3b}
\langle B(u)-B(u_h),u-v \rangle \le K \| u-u_h \|_V \| u-v \|_V.
\end{equation}
It is shown in~\cite{HoBe01} that the weak formulation of the
Hamiltonian constraint~(\ref{eqn:ham_weak})
fits precisely into this framework with the possible exception
of~(\ref{eqn:ham_as3b});
we will show below that {\em a priori} bounds such as those established
in~\cite{HoBe01} can be used to establish~(\ref{eqn:ham_as3b}).
We have again assumed that any Dirichlet function $\bar{u}$ has been
absorbed into the various forms in the obvious way to simplify the discussion.
The discussion can be modified to include approximation
of $\bar{u}$ by $\bar{u}_h$.

Again, we are interested in the quality of a
Galerkin approximation $u_h$ satisfying~(\ref{eqn:ham_gal_disc}),
or equivalently:
$$
    A(u-u_h,v) + \langle B(u)-B(u_h),v \rangle = 0,
      ~\forall v \in V_h \subset V.
$$
As before,
we will assume that there exists a sequence of approximation subspaces
$V_h \subset V$ parameterized by $h$, with $V_{h_1} \subset V_{h_2}$
when $h_2 < h_1$, and that there exists a sequence $\{a_h\}$, with
$\lim_{h\rightarrow 0} a_h = 0$, such that
\begin{equation}
  \label{eqn:ham_as4}
\|u-u_h\|_H \le a_h \|u-u_h\|_V, 
\end{equation}
holds whenever $u_h$ satisfies~(\ref{eqn:ham_gal_disc}).
The assumption~(\ref{eqn:ham_as4}) is again very natural;
see the discussion above following~(\ref{eqn:mom_as4}).
Under these assumptions, we have the following {\em a priori} error estimate.
\begin{theorem}
   \label{thm:ham_approx}
Let $V \subset H \subset V^*$ be a Gelfand triple of Hilbert spaces
with continuous embedding.
Assume that (\ref{eqn:ham_gal_cont}) and (\ref{eqn:ham_gal_disc}) are
uniquely solvable, and that assumptions
(\ref{eqn:mom_as1}), (\ref{eqn:ham_as2}), (\ref{eqn:ham_as3}),
(\ref{eqn:ham_as3b}),
and (\ref{eqn:ham_as4}) hold.
Then the approximation $u_h$ satisfying~(\ref{eqn:ham_gal_disc})
obeys the following quasi-optimal {\em a priori} error bounds:
\begin{eqnarray}
\|u-u_h\|_V 
   & \le & C \inf_{v \in V_h} \|u-v\|_V,
\label{eqn:holst} \\
\|u-u_h\|_H 
   & \le & C a_h \inf_{v \in V_h} \|u-v\|_V,
\label{eqn:holst_L2}
\end{eqnarray}
where $C$ is a constant independent of $h$.
\end{theorem}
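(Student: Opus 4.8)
The plan is to follow the same Schatz-type argument used in the proof of Theorem~\ref{thm:mom_approx}, modified so that the monotonicity assumption~(\ref{eqn:ham_as3a}) plays the role that the G{\aa}rding inequality played there. First I would record the Galerkin orthogonality relation obtained by subtracting~(\ref{eqn:ham_gal_disc}) from~(\ref{eqn:ham_gal_cont}), namely $A(u-u_h,v) + \langle B(u)-B(u_h),v \rangle = 0$ for all $v \in V_h$. Then, starting from the $V$-ellipticity~(\ref{eqn:ham_as3}), for an arbitrary $v \in V_h$ I would split the second argument of $A$ as $u-u_h = (u-v) + (v-u_h)$ and apply orthogonality to the piece $v-u_h \in V_h$, obtaining
\begin{equation*}
m\|u-u_h\|_V^2 \le A(u-u_h,u-u_h) = A(u-u_h,u-v) - \langle B(u)-B(u_h),v-u_h \rangle .
\end{equation*}

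The key step is the nonlinear term. Writing $v-u_h = (v-u) + (u-u_h)$ gives
\begin{equation*}
-\langle B(u)-B(u_h),v-u_h \rangle = \langle B(u)-B(u_h),u-v \rangle - \langle B(u)-B(u_h),u-u_h \rangle ,
\end{equation*}
and monotonicity~(\ref{eqn:ham_as3a}), applied to the pair $u,u_h$, shows the final term is nonnegative, so it can simply be discarded. Combining with the boundedness~(\ref{eqn:ham_as2}) of $A$ and the weak boundedness~(\ref{eqn:ham_as3b}) of $\langle B(\cdot),\cdot\rangle$ yields
\begin{equation*}
m\|u-u_h\|_V^2 \le (M+K)\,\|u-u_h\|_V\,\|u-v\|_V .
\end{equation*}
After dispatching the trivial case $\|u-u_h\|_V = 0$ separately, I would divide by $\|u-u_h\|_V$ and take the infimum over $v \in V_h$ to get~(\ref{eqn:holst}) with the explicit constant $C = (M+K)/m$, which is manifestly independent of $h$; note that, unlike the momentum case in Theorem~\ref{thm:mom_approx}, no smallness of $h$ is required since $V$-ellipticity is assumed outright (rather than only $V$-coercivity), and the unique solvability of~(\ref{eqn:ham_gal_disc}) is part of the hypotheses. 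The $H$-norm bound~(\ref{eqn:holst_L2}) then follows immediately by substituting~(\ref{eqn:holst}) into assumption~(\ref{eqn:ham_as4}).

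I do not expect a serious obstacle in the abstract estimate itself; the only point needing care is the bookkeeping of signs in the decomposition of $v-u_h$, so that the monotone term enters with the sign that lets it be dropped rather than one that would have to be bounded. The genuinely nontrivial content is pushed into the hypotheses — in particular the weak boundedness~(\ref{eqn:ham_as3b}) of the nonlinear form and the unique solvability of~(\ref{eqn:ham_gal_disc}) — and for the Hamiltonian constraint these have to be checked separately, using the {\em a priori} $L^\infty$-bounds on $\phi$ from Theorem~\ref{thm:ham_continuous} (established in~\cite{HoBe01}) to control the negative powers of $\phi$ that appear in $P'(\phi)$. That verification, rather than the abstract lemma above, is where the real difficulty lies.
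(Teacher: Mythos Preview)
Your proposal is correct and follows essentially the same argument as the paper's proof: Galerkin orthogonality, the splitting $u-u_h=(u-v)+(v-u_h)$, monotonicity~(\ref{eqn:ham_as3a}) to drop the term $\langle B(u)-B(u_h),u-u_h\rangle$, and then~(\ref{eqn:ham_as2}) and~(\ref{eqn:ham_as3b}) to obtain~(\ref{eqn:holst}) with the explicit constant $C=(M+K)/m$, followed by~(\ref{eqn:ham_as4}) for the $H$-norm bound. The only difference is cosmetic ordering --- the paper first isolates the bound $A(u-u_h,v-u_h)\le\langle B(u_h)-B(u),v-u\rangle$ before invoking $V$-ellipticity, whereas you interleave these steps --- but the content is identical.
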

\begin{proof}
We begin by subtracting~(\ref{eqn:ham_gal_disc}) from~(\ref{eqn:ham_gal_cont}),
and taking $v=w \in V_h \subset V$ in both equations, giving:
\begin{equation}
   \label{eqn:holst_tmp0}
  A(u-u_h,w) + \langle B(u)-B(u_h),w \rangle = 0,
~~~\forall w \in V_h.
\end{equation}
In particular if $v \in V_h$, so that $w=v-u_h \in V_h$, this implies that
\begin{eqnarray}
  A(u-u_h,v-u_h)
&=& \langle B(u_h)-B(u), v-u_h \rangle
\nonumber \\
&=& \langle B(u_h)-B(u), v-u \rangle
  - \langle B(u_h)-B(u), u_h-u \rangle
\nonumber \\
&\le& \langle B(u_h)-B(u),v-u \rangle,
   \label{eqn:holst_tmp1}
\end{eqnarray}
where we have employed monotonicity~(\ref{eqn:ham_as3a}).
Beginning now with~(\ref{eqn:ham_as3}) we have for arbitrary $v \in V_h$ that
\begin{eqnarray}
m \|u-u_h\|_V^2 
&\le& A(u-u_h,u-u_h)
\nonumber \\
&=& A(u-u_h,u-v) + A(u-u_h,v-u_h)
\nonumber \\
&\le& A(u-u_h,u-v) + \langle B(u_h)-B(u),v-u \rangle
\nonumber \\
&\le& M \|u-u_h\|_V \|u-v\|_V 
\label{eqn:holst_tmp2} \\
& & + K \|u-u_h\|_V \|u-v\|_V.
\nonumber
\end{eqnarray}
where we have used~(\ref{eqn:holst_tmp1}), (\ref{eqn:ham_as2}),
and~(\ref{eqn:ham_as3b}).
Excluding first the case that $\|u-u_h\|_V=0$ we divide through by
$m \|u-u_h\|_V$, giving
\begin{equation}
  \label{eqn:holst_core}
\|u-u_h\|_V \le \left( \frac{M+K}{m} \right)
    \|u - v\|_V, ~~~ \forall v \in V_h,
\end{equation}
which we note also holds when $\|u-u_h\|=0$.
This gives~(\ref{eqn:holst}) with $C=(M+K)/m$.
The second estimate (\ref{eqn:holst_L2})
now follows immediately from assumption~(\ref{eqn:ham_as4}).\qed
\end{proof}

In the case of the Hamiltonian constraint the nonlinear weak form
$\langle B(u),v \rangle$ has
the form
$$
\langle B(u),v \rangle = \int_{\calg{M}} P'(u) v ~dx,
$$
where $P'(u)$ is defined in~(\ref{eqn:dp_def}).
If both $u$ and $u_h$ satisfy {\em a priori} bounds as
established in~\cite{HoBe01},
then the continuity of $P''(x)$ on $(0,\infty)$ implies that
there exists $w \in L^{\infty}(\calg{M})$ satisfying similar
bounds such that
$$
   P'(u)-P'(u_h) = P''(w)(u - u_h),
~~~a.e.~~~\text{in}~ \calg{M}.
$$
Consider now
$$
\langle B(u)-B(u_h),u-v \rangle
~~~~~~~~~~~~~~~~~~~~~~~~~~~~~~~~~~~~~~~~~~~~~~~~~~~~~~~~~~~
$$
\vspace*{-0.8cm}
\begin{eqnarray*}
     &=& \int_{\calg{M}} (P'(u)-P'(u_h)) (u-v) ~dx
\nonumber \\
     &=& \int_{\calg{M}} P''(w) (u-u_h)(u-v) ~dx
\nonumber \\
&\le& \| P''(w) \|_{L^{\infty}(\alpha \le w \le \beta)}
       \| u-u_h \|_{L^2(\calg{M})}
       \| u-v   \|_{L^2(\calg{M})}
\nonumber \\
&\le& K
       \| u-u_h \|_{H^1(\calg{M})}
       \| u-v   \|_{H^1(\calg{M})}.
\end{eqnarray*}
Therefore, (\ref{eqn:ham_as3b}) holds with
$K=\| P''(w) \|_{L^{\infty}(\alpha \le w \le \beta)}$,
which can be computed explicitly from the results in~\cite{HoBe01}.
Although a Galerkin approximation $u_h$ constructed from finite element
bases will not in general satisfy a discrete maximum principle which would
lead to {\em a priori} bounds as in~\cite{HoBe01}, it
is possible to establish $L^{\infty}$-bounds for a Galerkin finite 
element solution to the Hamiltonian constraint under some assumptions
on the size and shape of the elements in the mesh
(cf. Theorem 3.2 in~\cite{KeJe90}).

Therefore, we see that
in the case of the Hamiltonian constraint we have established
that the assumptions required for Theorem~\ref{thm:ham_approx} hold,
with the exception of~(\ref{eqn:ham_as4}).
Under the mild additional regularity assumption:
$$
\| \phi \|_{H^{1+s}(\calg{M})}
   \le C < \infty,  ~~~~ s > 0,
$$
where $C$ depends on the data, then it can be shown
that~(\ref{eqn:ham_as4}) holds in the setting
of piecewise linear finite element spaces, with
$a_h = n^{-\gamma}$ for some $\gamma > 0$, where $n = \dim( V_h )$.

Other approaches also lead to well-posed weak formulations of the
Hamiltonian constraint with associated approximation theory.
In particular, an obstacle problem formulation is possible as a technique
for handling the pole at the origin in the Hamiltonian constraint, leading
to a nonlinear variational inequality.
This approach requires fewer assumptions on the data in the Hamiltonian
constraint than we have assumed here.
Although several difficulties arise in {\em a priori} error analysis,
a number of results for linear and nonlinear variational
inequalities are known, and could be applied in this case.
Results similar to Theorem~\ref{thm:ham_approx} are obtainable under
the same minimal assumption $\phi \in H^1(\calg{M})$ required to give a
well-posed weak formulation (cf.~\cite{Ciar78,Brez85,BHR77,Falk74}).
Introducing a cut-off function in place of the
two terms with poles in the Hamiltonian constraint leads to a well-posed
weak formulation, although the error analysis is not clear.
Approaches based on weighted Sobolev spaces also lead to well-posed
weak formulations, but incorporation of weights into the finite element
subspaces is technically complicated.

\subsection{Numerical solution using MC}

To use \MC\ to calculate the initial bending of space and time around two
massive black holes separated by a fixed distance by solving the above
constraint equations, we place two spherical objects in space, 
the first object having unit radius (after appropriate normalization),
the second object having radius 2, separated by a distance of 20.
Infinite space is truncated with an enclosing sphere of radius 100.
(This outer boundary may be moved further from the objects to improve the
accuracy of boundary condition approximations.)
Resonable choices for the remaining functions and parameters appearing
in the equations are used below to completely specify the problem for use
as an illustrative numerical example.
(More careful examination of the various functions and parameters 
appear in~\cite{HoBe01}, and a number of detailed
experiments with more physically meaningful data appear
in~\cite{HoBe99,BeHo99}.)

We then generate an initial (coarse) mesh of tetrahedra inside the enclosing
sphere, exterior to the two spherical objects within the enclosing sphere.
The mesh is generated by adaptively bisecting an initial mesh consisting
of an icosahedron volume filled with tetrahedra.
The bisection procedure simply bisects any tetrahedron which touches the
surface of one of the small spherical objects.
When a reasonable approximation to the surface of the spheres is obtained,
the tetrahedra completely inside the small spherical objects are removed,
and the points forming the surfaces of the small spherical objects are
projected to the spherical surfaces exactly.
This projection involves solving a linear elasticity problem,
together with the use of a shape-optimization-based smoothing procedure.
The smoothing procedure locally optimizes the shape measure function in
equation~(\ref{eqn:shape}) for a given $d$-simplex $s$, in an iterative
fashion.
A much improved binary black hole mesh generator has been developed by
D.~Bernstein; the new mesh generator is described in~\cite{HoBe99,BeHo99}
along with a number of more detailed examples using MC.

The initial coarse mesh in Figures~\ref{bh_1a}--\ref{bh_1c}, generated using
the procedure described above, has approximately 31,000 tetrahedral elements
and 6,000 vertices.
To solve the problem on a 4-processor computing cluster using
PPUM (see Section~\ref{sec:ppum}), we begin by partitioning
the domain into four subdomains
(shown in Figures~\ref{bh_speca}--\ref{bh_specb})
with approximately equal error
using the recursive spectral bisection algorithm described in~\cite{BaHo98a}.
The four subdomain problems are then solved independently by \MC, starting
from the complete coarse mesh and coarse mesh solution.
The mesh is adaptively refined in each subdomain until
a mesh with roughly 50000 vertices is obtained
(yielding subdomains with about 250000 simplices each).

The resulting refined subdomain meshes are shown in
Figures~\ref{bh_ref1}--\ref{bh_ref2}.
The refinement performed by \MC\ is confined
primarily to the given region as driven by the weighted residual
error indicator from Section~\ref{sec:indicators},
with some refinement into adjacent regions
due to the closure algorithm which maintains conformity and shape regularity. 
The four problems are solved completely independently by the
sequential adaptive software package \MC.
One component of the solution (the conformal factor $\phi$) of the
elliptic system is depicted in Figure~\ref{bh_sol1}
(the subdomain zero solution)
and in Figure~\ref{bh_sol2} (the subdomain two solution).

While this example illustrates some of the capabilities of \MC,
a number of more detailed examples involving the contraints,
using more physically meaningful data, appear in~\cite{HoBe99,BeHo99}.

\begin{figure}[htbp]
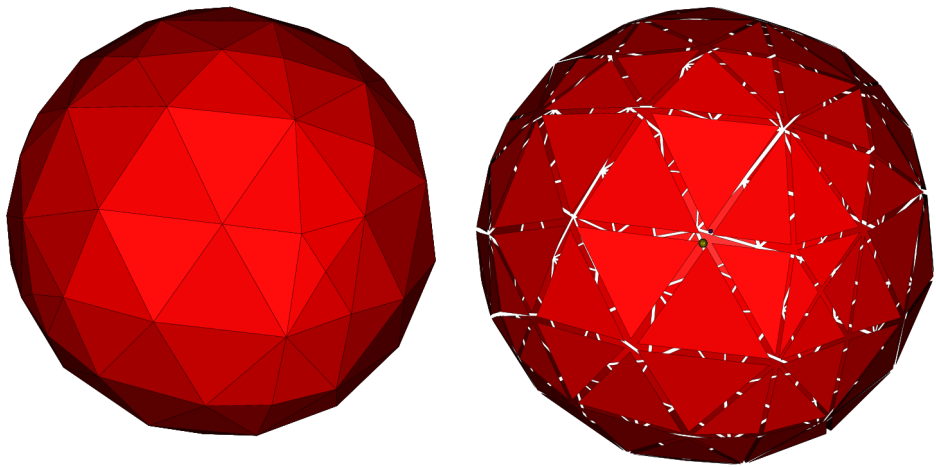

\centerline{
\mbox{\myfigpng{bh_1}{2.6in}}
\mbox{\myfigpng{bh_2}{2.6in}}
}
\caption{The coarse binary black hole mesh
         (approximately 6,000 vertices and 31,000 simplices).}
\label{bh_1a}
\end{figure}
\begin{figure}[htbp]
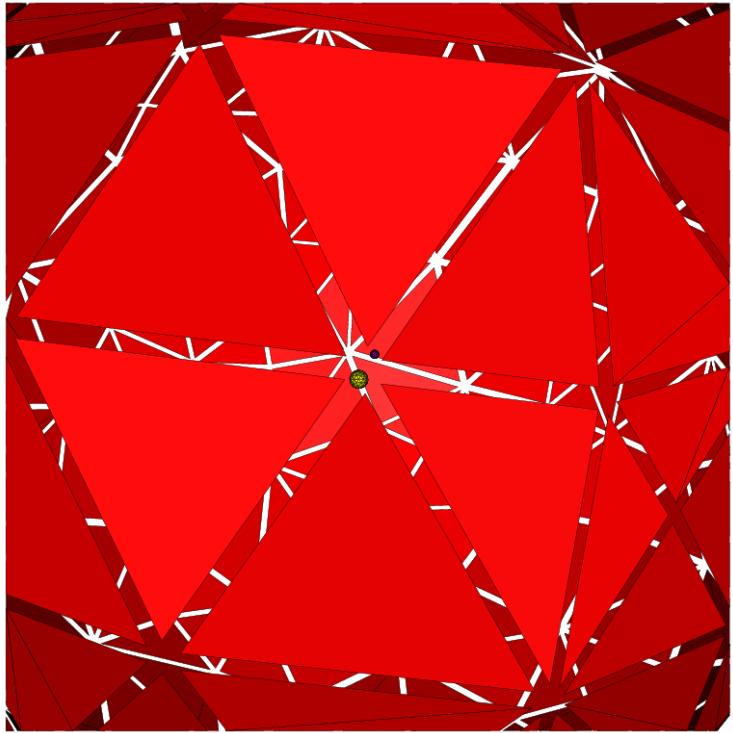

\centerline{\hbox{\myfigpng{bh_3}{3.9in}}}
\caption{Exploded view of the coarse binary black hole mesh
         showing the two interior hole boundaries.}
\label{bh_1b}
\end{figure}

\begin{figure}[htbp]
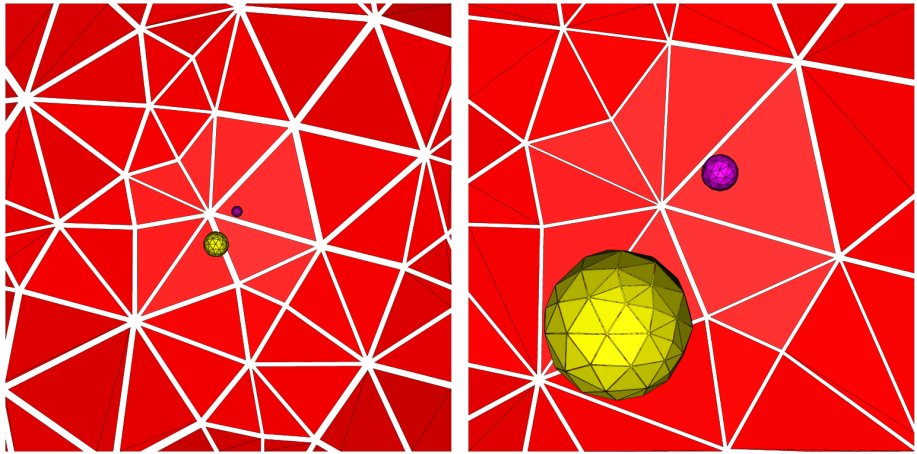

\centerline{
\mbox{\myfigpng{bh_4}{2.4in}}
\mbox{\myfigpng{bh_5}{2.4in}}
}
\caption{Closeup of the interior of the coarse binary black hole mesh.
         The interior holes surfaces of black hole coarse mesh;
         the larger hole surface is colored yellow,
         the smaller hole surface is colored purple,
         and the exterior boundary is colored red.}
\label{bh_1c}
\end{figure}

\begin{figure}[htbp]
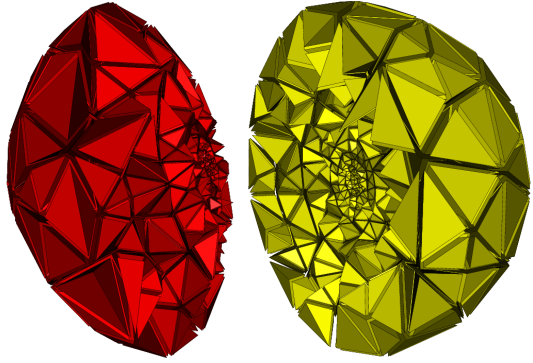

\centerline{
\mbox{\myfigpng{bh_dom1}{1.9in}}
\mbox{\myfigpng{bh_dom3}{1.9in}}
}
\caption{Subdomains 2 (red) and 4 (yellow)
         from spectral bisection of the coarse binary black hole mesh;
         these subdomains enclose two smaller subdomains that contain
         the inner holes.}
\label{bh_speca}
\end{figure}

\begin{figure}[htbp]
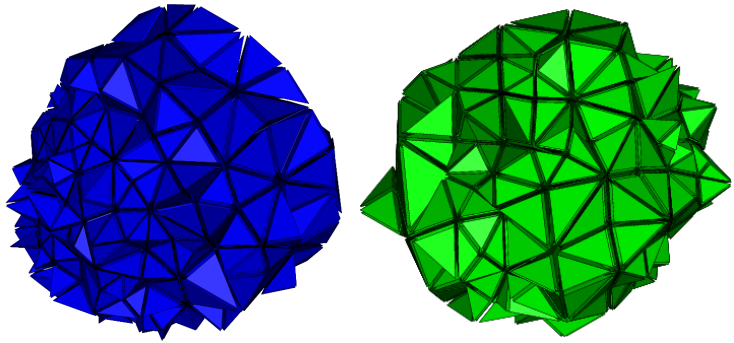

\centerline{
\mbox{\myfigpng{bh_dom2}{1.8in}}
\mbox{\myfigpng{bh_dom0}{1.8in}}
}
\caption{Subdomains 3 (blue) and 1 (green)
         from spectral bisection of the coarse binary black hole mesh;
         these subdomains each contain one of the inner holes.}
\label{bh_specb}
\end{figure}

\begin{figure}[htbp]
\centerline{
\mbox{\myfigpng{bh_d0_3}{3.0in}}
\mbox{\myfigpng{bh_d0_2}{3.0in}}
}
\caption{Closeup of the subdomain 1 refined mesh around the
         surface of the smaller hole.
         (Approximately 51,000 vertices and 266,000 simplices; only faces of
         tetrahedra on the boundary surfaces are shown).}
\label{bh_ref1}
\end{figure}

\begin{figure}[htbp]
\centerline{
\mbox{\myfigpng{bh_d2_3}{3.0in}}
\mbox{\myfigpng{bh_d2_2}{3.0in}}
}
\caption{Closeup of the subdomain 3 refined mesh around the
         surface of the larger hole.
         (Approximately 45,000 vertices and 228,000 simplices; only faces of
         tetrahedra on the boundary surfaces are show).}
\label{bh_ref2}
\end{figure}

\begin{figure}[htbp]
\centerline{\hbox{\myfigpng{bh_sol1}{3.5in}}}
\caption{The conformal factor $\phi$ from the adapted subdomain 1 solve.}
\label{bh_sol1}
\end{figure}

\begin{figure}[htbp]
\centerline{\hbox{\myfigpng{bh_sol2}{3.5in}}}
\caption{The conformal factor $\phi$ from the adapted subdomain 3 solve.}
\label{bh_sol2}
\end{figure}

%%%%%%%%%%%%%%%%%%%%%%%%%%%%%%%%%%%%%%%%%%%%%%%%%%%%%%%%%%%%%%%%%%%%%%%%%%%%%%
% _SECTION_7_
%%%%%%%%%%%%%%%%%%%%%%%%%%%%%%%%%%%%%%%%%%%%%%%%%%%%%%%%%%%%%%%%%%%%%%%%%%%%%%
\section{Summary}
  \label{sec:summary}

In this paper we considered the design of adaptive multilevel finite
element methods for certain elliptic systems arising in geometric analysis
and general relativity.  
We began with a brief introduction to nonlinear elliptic tensor systems
on manifolds, and then discussed adaptive finite element methods for
this class of problems.
We derived two {\em a posteriori} error indicators,
one of which was local residual-based, and one of which was
based on a global linearized adjoint or {\em dual} problem.

The implementation of these methods and indicators in the ANSI C finite
element software package \MC\ was discussed, including detailed descriptions
of some of the more interesting algorithms and data structures it employs.
\MC\ was designed by the author specifically for solving general second-order
nonlinear elliptic systems of tensor equations on Riemannian manifolds
with boundary, including domains requiring multiple coordinate systems.
The key feature of \MC\ which makes it particularly useful for
highly complex tensor systems of PDEs arising in geometric analysis and
general relativity is its abstraction; in addition to the support for
multi-chart manifolds, the \MC\ user supplies only two ANSI C functions
representing the weak form of the tensor system
$\langle F(u),v \rangle$ along with its linearization
form $\langle DF(u)w,v \rangle$.
Moreover, the forms themselves may be implemented almost exactly
as they are written on paper, due to the fact that the quadrature-based
assembly allows for tensor expressions to be treated discretely
as point tensors rather than tensor fields.
If residual-based or duality-based {\em a posteriori} error estimation
is to be used, then the user must provide a third function $F(u)$, which is
essentially the strong form of the differential equation as needed for the
residual and duality indicators given in Section~\ref{sec:indicators}.
We also described an unusual approach taken in \MC\ for using
parallel computers in an adaptive setting, based on joint work with
R. Bank~\cite{BaHo98a}.
We then derived global $L^2$- and $H^1$-error estimates for the solutions
produced by the parallel algorithm, by interpreting the algorithm as a
special partition of unity method~\cite{BaMe97} and by using the recent
local estimates of Xu and Zhou~\cite{XuZh97}.

As an illustrative example, we took a brief look at the
Hamiltonian and momentum constraints in the Einstein equations.
We first summarized a number of operator properties and solvability
results recently established in~\cite{HoBe01},
and then derived two {\em a priori} error estimates for Galerkin
approximations, completing the theoretical framework for effective
use of adaptive multilevel finite element methods.
We finished by presenting an illustrative example using the \MC\ software.
More detailed examples may be found in~\cite{HoBe99,BeHo99}.

\section*{Acknowledgements}
The author thanks K.~Thorne and H.~Keller for many fruitful discussions
over several years at Caltech, and also thanks D.~Bernstein for numerous
discussions of the relativity applications which continually renew
my fascination with physics.
The author would also like to thank D.~Arnold, R.~Bank, and D.~Estep
for their helpful advice which contributed to this work.
This work was supported in part by a UCSD Hellman Fellowship
and in part by NSF~CAREER~Award~9875856.

\bibliographystyle{abbrv}
\bibliography{../bib/books,../bib/papers,../bib/mjh,../bib/library,../bib/ref-gn,../bib/coupling,../bib/pnp}

\begin{thebibliography}{10}

\bibitem{Adam78}
R.~A. Adams.
\newblock {\em Sobolev Spaces}.
\newblock Academic Press, San Diego, CA, 1978.

\bibitem{Akso01}
B.~Aksoylu.
\newblock {\em Adaptive Multilevel Numerical Methods}.
\newblock PhD thesis, Department of Mathematics, UC San Diego, 2001.

\bibitem{AMP97}
D.N. Arnold, A.~Mukherjee, and L.~Pouly.
\newblock Locally adapted tetrahedral meshes using bisection.
\newblock {\em SIAM J.\ Sci.\ Statist.\ Comput.}, 22(2):431--448, 1997.

\bibitem{Aubi82}
T.~Aubin.
\newblock {\em Nonlinear Analysis on Manifolds. {Monge-Amp\'ere} Equations}.
\newblock Springer-Verlag, New York, NY, 1982.

\bibitem{BaMe97}
I.~Babu\v{s}ka and J.~M. Melenk.
\newblock The partition of unity finite element method.
\newblock {\em Internat. J. Numer. Methods Engrg.}, 40:727--758, 1997.

\bibitem{BaRh78b}
I.~Babu\v{s}ka and W.C. Rheinboldt.
\newblock Error estimates for adaptive finite element computations.
\newblock {\em SIAM J.\ Numer.\ Anal.}, 15:736--754, 1978.

\bibitem{BaRh78a}
I.~Babu\v{s}ka and W.C. Rheinboldt.
\newblock A posteriori error estimates for the finite element method.
\newblock {\em Int. J. Numer. Meth. Engrg.}, 12:1597--1615, 1978.

\bibitem{BHW99}
N.~Baker, M.~Holst, and F.~Wang.
\newblock Adaptive multilevel finite element solution of the
  {Poisson-Boltzmann} equation {II}: refinement at solvent accessible surfaces
  in biomolecular systems.
\newblock {\em J.\ Comput.\ Chem.}, 21:1343--1352, 2000.

\bibitem{BaHo98a}
R.~Bank and M.~Holst.
\newblock A new paradigm for parallel adaptive mesh refinement.
\newblock {\em SIAM J.\ Sci.\ Comput.}, 22(4):1411--1443, 2000.

\bibitem{PLTMG}
R.~E. Bank.
\newblock {\em {PLTMG}: A Software Package for Solving Elliptic Partial
  Differential Equations, Users' Guide 8.0}.
\newblock Software, Environments and Tools, Vol.~5. SIAM, Philadelphia, PA,
  1998.

\bibitem{BaDu81}
R.~E. Bank and T.~F. Dupont.
\newblock An optimal order process for solving finite element equations.
\newblock {\em Math. Comp.}, 36(153):35--51, 1981.

\bibitem{BDY88}
R.~E. Bank, T.~F. Dupont, and H.~Yserentant.
\newblock The hierarchical basis multigrid method.
\newblock {\em Numer.\ Math.}, 52:427--458, 1988.

\bibitem{BHMP98}
R.~E. Bank, M.~Holst, B.~Mantel, J.~Periaux, and C.~H. Zhou.
\newblock {CFD PPLTMG}: Using a posteriori error estimates and domain
  decomposition.
\newblock In {\em ECCOMAS 98}, New York, NY, 1998. John Wiley \& Sons.

\bibitem{BaMi89}
R.~E. Bank and H.~D. Mittelmann.
\newblock Stepsize selection in continuation procedures and damped {Newton's}
  method.
\newblock {\em J. Computational and Applied Mathematics}, 26:67--77, 1989.

\bibitem{BaRo80}
R.~E. Bank and D.~J. Rose.
\newblock Parameter selection for {Newton}-like methods applicable to nonlinear
  partial differential equations.
\newblock {\em SIAM J.\ Numer.\ Anal.}, 17(6):806--822, 1980.

\bibitem{BaRo81}
R.~E. Bank and D.~J. Rose.
\newblock {Global Approximate Newton Methods}.
\newblock {\em Numer.\ Math.}, 37:279--295, 1981.

\bibitem{BaRo82}
R.~E. Bank and D.~J. Rose.
\newblock Analysis of a multilevel iterative method for nonlinear finite
  element equations.
\newblock {\em Math. Comp.}, 39(160):453--465, 1982.

\bibitem{BaSm93}
R.~E. Bank and R.~K. Smith.
\newblock A posteriori error estimates based on hierarchical bases.
\newblock {\em SIAM J.\ Numer.\ Anal.}, 30(4):921--935, 1993.

\bibitem{BaSm97}
R.~E. Bank and R.~K. Smith.
\newblock Mesh smoothing using {\em a posteriori} error estimates.
\newblock {\em SIAM J.\ Numer.\ Anal.}, 34:979--997, 1997.

\bibitem{BaWe85}
R.~E. Bank and A.~Weiser.
\newblock Some a posteriori error estimators for elliptic partial differential
  equations.
\newblock {\em Math.\ Comp.}, 44(170):283--301, 1985.

\bibitem{Bans91b}
E.~B{\"a}nsch.
\newblock An adaptive finite-element strategy for the three-dimsional
  time-dependent {Navier-Stokes} equations.
\newblock {\em Journal of Computional and Applied Mathematics}, 36:3--28, 1991.

\bibitem{Bans91a}
E.~B{\"a}nsch.
\newblock Local mesh refinement in 2 and 3 dimensions.
\newblock {\em Impact of Computing in Science and Engineering}, 3:181--191,
  1991.

\bibitem{BaSi95}
E.~B{\"a}nsch and K.~G. Siebert.
\newblock {\em A posteriori} error estimation for nonlinear problems by duality
  techniques.
\newblock Technical report, Institut f{\"u}r Angewandte Mathematik,
  Hermann-Herder-Strase 10, 79104 Freiburg, Germany, 1995.

\bibitem{BBJL98}
P.~Bastian, K.~Birken, K.~Johannsen, S.~Lang, N.~Neuss, H.~Rentz-Reichert, and
  C.~Wieners.
\newblock {\em {UG} -- A Flexible Software Toolbox for Solving Partial
  Differential Equations}, 1998.

\bibitem{BER95}
R.~Beck, B.~Erdmann, and R.~Roitzsch.
\newblock {KASKADE 3.0}: An ojbect-oriented adaptive finite element code.
\newblock Technical Report TR95--4, Konrad-Zuse-Zentrum for
  Informationstechnik, Berlin, 1995.

\bibitem{BeHo99}
D.~Bernstein and M.~Holst.
\newblock {\em Adaptive Finite Element Solution of the Constraint Equations in
  General Relativity II. Examples}.
\newblock In preparation.

\bibitem{Bey96}
J.~Bey.
\newblock Adaptive grid manager: {AGM3D} manual.
\newblock Technical Report~50, SFB 382, Math. Inst. Univ. Tubingen, 1996.

\bibitem{BrSc94}
S.~C. Brenner and L.~R. Scott.
\newblock {\em The Mathematical Theory of Finite Element Methods}.
\newblock Springer-Verlag, New York, NY, 1994.

\bibitem{Brez85}
F.~Brezzi.
\newblock Mathematical theory of finite elements.
\newblock In A.~K. Noor and W.~D. Pilkey, editors, {\em State-of-the-art
  Surveys on Finite Element Technology}, pages 1--25, New York, NY, 1985. The
  American Society of Mechanical Engineers.

\bibitem{BHR77}
F.~Brezzi, W.~W. Hager, and P.~A. Raviart.
\newblock Error estimates for the finite element solution of variational
  inequalities.
\newblock {\em Numer.\ Math.}, 28:431--443, 1977.

\bibitem{CSZ94}
T.~F. Chan, B.~Smith, and J.~Zou.
\newblock Overlapping {Schwarz} methods on unstructured meshes using
  non-matching coarse grids.
\newblock Technical Report CAM 94-8, Department of Mathematics, UCLA, 1994.

\bibitem{CGZ97}
T.F. Chan, S.~Go, and L.~Zikatanov.
\newblock Lecture notes on multilevel methods for elliptic problems on
  unstructured meshes.
\newblock Technical report, Dept. of Mathematics, UCLA, 1997.

\bibitem{CIM92}
Y.~Choquet-Bruhat, J.~Isenberg, and V.~Moncrief.
\newblock Solutions of constraints for {Einstein} equations.
\newblock {\em C.R. Acad. Sci. Paris}, 315:349--355, 1992.

\bibitem{ChYo80}
Y.~Choquet-Bruhat and J.~W. {York, Jr.}
\newblock The {Cauchy} problem.
\newblock In A.~Held, editor, {\em General Relativity and Gravitation}, New
  York, 1980. Plenum Press.

\bibitem{Ciar78}
P.~G. Ciarlet.
\newblock {\em The Finite Element Method for Elliptic Problems}.
\newblock North-Holland, New York, NY, 1978.

\bibitem{Clem75}
Ph. Cl\'{e}ment.
\newblock Approximation by finite element functions using local regularization.
\newblock {\em R.A.I.R.O.}, 2:77--84, 1975.

\bibitem{DaHm89}
W.~Dahmen.
\newblock Smooth piecewise quadratic surfaces.
\newblock {\em Mathematical Methods in Computer-Aided Geometric Design}, pages
  181--193, 1989.

\bibitem{DaMi84}
W.~Dahmen and C.~Micchelli.
\newblock Subdivision algorithms for the generation of box spline surfaces.
\newblock {\em Computer-Aided Geometric Design}, 18(2):115--129, 1984.

\bibitem{Davi63}
P.~J. Davis.
\newblock {\em Interpolation and Approximation}.
\newblock Dover Publications, Inc., New York, NY, 1963.

\bibitem{DES82}
R.~S. Dembo, S.~C. Eisenstat, and T.~Steihaug.
\newblock {Inexact Newton Methods}.
\newblock {\em SIAM J.\ Numer.\ Anal.}, 19(2):400--408, 1982.

\bibitem{DeLo93}
R.~A. DeVore and G.~G. Lorentz.
\newblock {\em Constructive Approximation}.
\newblock Springer-Verlag, New York, NY, 1993.

\bibitem{EiWa92}
S.~C. Eisenstat and H.~F. Walker.
\newblock {Globally Convergent Inexact Newton Methods}.
\newblock Technical report, Dept. of Mathematics and Statistics, Utah State
  University, 1992.

\bibitem{EHM2001}
D.~Estep, M.~Holst, and D.~Mikulencak.
\newblock Accounting for stability: {\em a posteriori} error estimates for
  finite element methods based on residuals and variational analysis.
\newblock {\em Communications in Numerical Methods in Engineering},
  18(1):15--30, 2002.

\bibitem{Falk74}
R.~Falk.
\newblock Error estimates for the approximation of a class of variational
  inequalities.
\newblock {\em Math. Comput.}, 28:963--971, 1974.

\bibitem{FuKu80}
S.~Fucik and A.~Kufner.
\newblock {\em Nonlinear Differential Equations}.
\newblock Elsevier Scientific Publishing Company, New York, NY, 1980.

\bibitem{GrSc00}
M.~Griebel and M.~A. Schweitzer.
\newblock A particle-partition of unity method for the solution of elliptic,
  parabolic, and hyperbolic {PDE}s.
\newblock {\em SIAM J.\ Sci.\ Statist.\ Comput.}, 22(3):853--890, 2000.

\bibitem{Grim96}
C.M. Grimm.
\newblock {\em Modeling Surfaces of Arbitrary Topology using Manifolds}.
\newblock PhD thesis, Department of Computer Science, Brown University, May
  1996.

\bibitem{GrHu95}
C.M. Grimm and J.F. Hughes.
\newblock Modeling surfaces of arbitrary topology using manifolds.
\newblock In {\em Graphics (Proceedings of SIGGRAPH '95)}, volume 29(4) of {\em
  SIGGRAPH}, pages 359--369. ACM, July, 1995.

\bibitem{Hack85}
W.~Hackbusch.
\newblock {\em Multi-grid Methods and Applications}.
\newblock Springer-Verlag, Berlin, Germany, 1985.

\bibitem{Hack94}
W.~Hackbusch.
\newblock {\em Iterative Solution of Large Sparse Systems of Equations}.
\newblock Springer-Verlag, Berlin, Germany, 1994.

\bibitem{HaEl73}
S.~W. Hawking and G.~F.~R. Ellis.
\newblock {\em The Large Scale Structure of Space-Time}.
\newblock Cambridge University Press, Cambridge, MA, 1973.

\bibitem{Hebe91}
E.~Hebey.
\newblock {\em {Sobolev} Spaces on {Riemannian} Manifolds}.
\newblock Springer-Verlag, Berlin, Germany, 1991.

\bibitem{Hols97c}
M.~Holst.
\newblock {\em Finite element approximation theory on {Riemannian} manifolds}.
\newblock In preparation.

\bibitem{HBW99}
M.~Holst, N.~Baker, and F.~Wang.
\newblock Adaptive multilevel finite element solution of the
  {Poisson-Boltzmann} equation {I}: algorithms and examples.
\newblock {\em J.\ Comput.\ Chem.}, 21:1319--1342, 2000.

\bibitem{HoBe99}
M.~Holst and D.~Bernstein.
\newblock {\em Adaptive Finite Element Solution of the Constraint Equations in
  General Relativity I. Algorithms}.
\newblock In preparation.

\bibitem{HoBe01}
M.~Holst and D.~Bernstein.
\newblock {\em Weak solutions to the {Einstein} constraint equations on
  manifolds with boundary}.
\newblock In preparation.

\bibitem{HoTi96b}
M.~Holst and E.~Titi.
\newblock Determining projections and functionals for weak solutions of the
  {Navier-Stokes} equations.
\newblock In Yair Censor and Simeon Reich, editors, {\em Recent Developments in
  Optimization Theory and Nonlinear Analysis}, volume 204 of {\em Contemporary
  Mathematics}, Providence, Rhode Island, 1997. American Mathematical Society.

\bibitem{HoVa97a}
M.~Holst and S.~Vandewalle.
\newblock Schwarz methods: to symmetrize or not to symmetrize.
\newblock {\em SIAM J.\ Numer.\ Anal.}, 34(2):699--722, 1997.

\bibitem{Isen95}
J.~Isenberg.
\newblock Constant mean curvature solutions of the {Einstein} constraint
  equations on closed manifolds.
\newblock {\em Classical and Quantum Gravity}, 12:2249--2274, 1995.

\bibitem{IsMo96}
J.~Isenberg and V.~Moncrief.
\newblock A set of nonconstant mean curvature solutions of the {Einstein}
  constraint equations on closed manifolds.
\newblock {\em Classical and Quantum Gravity}, 13:1819--1847, 1996.

\bibitem{JeKe91}
J.~W. Jerome and T.~Kerkhoven.
\newblock A finite element approximation theory for the drift diffusion
  semiconductor model.
\newblock {\em SIAM J.\ Numer.\ Anal.}, 28(2):403--422, 1991.

\bibitem{Kell87}
H.~B. Keller.
\newblock {\em Numerical Methods in Bifurcation Problems}.
\newblock Tata Institute of Fundamental Research, Bombay, India, 1987.

\bibitem{Kell92}
H.~B. Keller.
\newblock {\em Numerical Methods for Two-Point Boundary-Value Problems}.
\newblock Dover Publications, New York, NY, 1992.

\bibitem{KeJe90}
T.~Kerkhoven and J.~W. Jerome.
\newblock {$L_{\infty}$} stability of finite element approximations of elliptic
  gradient equations.
\newblock {\em Numer.\ Math.}, 57:561--575, 1990.

\bibitem{Lee97}
J.~M. Lee.
\newblock {\em Riemannian Manifolds}.
\newblock Springer-Verlag, New York, NY, 1997.

\bibitem{LiJo94}
A.~Liu and B.~Joe.
\newblock Relationship between tetrahedron shape measures.
\newblock {\em BIT}, 34:268--287, 1994.

\bibitem{LiJo95}
A.~Liu and B.~Joe.
\newblock Quality local refinement of tetrahedral meshes based on bisection.
\newblock {\em SIAM J.\ Sci.\ Statist.\ Comput.}, 16(6):1269--1291, 1995.

\bibitem{LiRh94}
J.L. Liu and W.C. Rheinboldt.
\newblock A posteriori finite element error estimators for indefinite elliptic
  boundary value problems.
\newblock {\em Numer. Funct. Anal. and Optimiz.}, 15(3):335--356, 1994.

\bibitem{LiRh96}
J.L. Liu and W.C. Rheinboldt.
\newblock A posteriori finite element error estimators for parametrized
  nonlinear boundary value problems.
\newblock {\em Numer. Funct. Anal. and Optimiz.}, 17(5):605--637, 1996.

\bibitem{MaHu94}
J.~E. Marsden and T.~J.~R. Hughes.
\newblock {\em Mathematical Foundations of Elasticity}.
\newblock Dover Publications, New York, NY, 1994.

\bibitem{Maub95}
J.M. Maubach.
\newblock Local bisection refinement for {N}-simplicial grids generated by
  relection.
\newblock {\em SIAM J.\ Sci.\ Statist.\ Comput.}, 16(1):210--277, 1995.

\bibitem{Muck93}
E.P. Mucke.
\newblock {\em Shapes and Implementations in Three-Dimensional Geometry}.
\newblock PhD thesis, Dept. of Computer Science, University of Illinois at
  Urbana-Champaign, 1993.

\bibitem{Mukh96}
A.~Mukherjee.
\newblock {\em An Adaptive Finite Element Code for Elliptic Boundary Value
  Problems in Three Dimensions with Applications in Numerical Relativity}.
\newblock PhD thesis, Dept. of Mathematics, The Pennsylvania State University,
  1996.

\bibitem{NiSc74}
J.~A. Nitsche and A.~H. Schatz.
\newblock Interior estimates for {Ritz-Galerkin} methods.
\newblock {\em Math. Comp.}, 28:937--958, 1974.

\bibitem{MuYo74a}
N.~{O'Murchadha} and J.W. {York, Jr.}
\newblock Initial-value problem of general relativity. {I.} {General}
  formulation and physical interpretation.
\newblock {\em Phys.\ Rev.\ D}, 10(2):428--436, July, 1974.

\bibitem{MuYo74b}
N.~{O'Murchadha} and J.W. {York, Jr.}
\newblock Initial-value problem of general relativity. {II.} {Stability} of
  solutions of the initial-value equations.
\newblock {\em Phys.\ Rev.\ D}, 10(2):437--446, July, 1974.

\bibitem{MuYo73}
N.~{O'Murchadha} and J.W. {York, Jr.}
\newblock Existence and uniqueness of solutions of the {Hamiltonian} constraint
  of general relativity on compact manifolds.
\newblock {\em J. Math. Phys.}, 14(11):1551--1557, November, 1973.

\bibitem{OrRh70}
J.~M. Ortega and W.~C. Rheinboldt.
\newblock {\em Iterative Solution of Nonlinear Equations in Several Variables}.
\newblock Academic Press, New York, NY, 1970.

\bibitem{Riva84}
M.C. Rivara.
\newblock Algorithms for refining triangular grids suitable for adaptive and
  multigrid techniques.
\newblock {\em International Journal for Numerical Methods in Engineering},
  20:745--756, 1984.

\bibitem{Riva91}
M.C. Rivara.
\newblock Local modification of meshes for adaptive and/or multigrid
  finite-element methods.
\newblock {\em Journal of Computational and Applied Mathematics}, 36:79--89,
  1991.

\bibitem{RoSt75}
I.G. Rosenberg and F.~Stenger.
\newblock A lower bound on the angles of triangles constructed by bisecting the
  longest side.
\newblock {\em Math.\ Comp.}, 29:390--395, 1975.

\bibitem{Rose97}
S.~Rosenberg.
\newblock {\em The {Laplacian} on a {Riemannian} Manifold}.
\newblock Cambridge University Press, Cambridge, MA, 1997.

\bibitem{RuSt87}
J.~W. Ruge and K.~St{\"u}ben.
\newblock Algebraic multigrid ({AMG}).
\newblock In S.~F. McCormick, editor, {\em Multigrid Methods}, volume~3 of {\em
  Frontiers in Applied Mathematics}, pages 73--130. SIAM, Philadelphia, PA,
  1987.

\bibitem{Scha74}
A.~H. Schatz.
\newblock An oberservation concerning {Ritz-Galerkin} methods with indefinite
  bilinear forms.
\newblock {\em Math.\ Comp.}, 28(128):959--962, 1974.

\bibitem{ScWa00}
A.~H. Schatz and J.~Wang.
\newblock Some new error estimates for {Ritz-Galerkin} methods with minimal
  regularity assumptions.
\newblock {\em Math.\ Comp.}, 62:445--475, 2000.

\bibitem{Schw91}
G.~Schwarz.
\newblock {\em Hodge Decomposition: A Method for Solving Boundary Value
  Problems}.
\newblock Springer-Verlag, New York, NY, 1991.

\bibitem{ScZh90}
L.~R. Scott and S.~Zhang.
\newblock Finite element interpolation of nonsmooth functions satisfying
  boundary conditions.
\newblock {\em Math.\ Comp.}, 54(190):483--493, 1990.

\bibitem{Styn80}
M.~Stynes.
\newblock On faster convergence of the bisection method for all triangles.
\newblock {\em Math.\ Comp.}, 35:1195--1201, 1980.

\bibitem{VMB94}
P.~Vanek, J.~Mandel, and M.~Brezina.
\newblock Algebraic multigrid on unstructured meshes.
\newblock Technical Report UCD/CCM 34, Center for Computational Mathematics,
  University of Colorado at Denver, 1994.

\bibitem{Verf94}
R.~Verf{\"u}rth.
\newblock A posteriori error estimates for nonlinear problems. {Finite} element
  discretizations of elliptic equations.
\newblock {\em Math.\ Comp.}, 62(206):445--475, 1994.

\bibitem{Verf96}
R.~Verf{\"u}rth.
\newblock {\em A Review of {\em A Posteriori} Error Estimation and Adaptive
  Mesh-Refinement Techniques}.
\newblock John Wiley \& Sons Ltd, New York, NY, 1996.

\bibitem{Wald84}
R.~M. Wald.
\newblock {\em General Relativity}.
\newblock University of Chicago Press, Chicago, IL, 1984.

\bibitem{Wlok92}
J.~Wloka.
\newblock {\em Partial Differential Equations}.
\newblock Cambridge University Press, Cambridge, MA, 1992.

\bibitem{Xu92a}
J.~Xu.
\newblock Iterative methods by space decomposition and subspace correction.
\newblock {\em SIAM Review}, 34(4):581--613, 1992.

\bibitem{Xu92b}
J.~Xu.
\newblock A novel two-grid method for semilinear elliptic equations.
\newblock Technical report, Dept. of Mathematics, Penn State University, 1992.

\bibitem{Xu92c}
J.~Xu.
\newblock Two-grid finite element discretization for nonlinear elliptic
  equations.
\newblock Technical report, Dept. of Mathematics, Penn State University, 1992.

\bibitem{XuZh97}
J.~Xu and A.~Zhou.
\newblock Local and parallel finite element algorithms based on two-grid
  discretizations.
\newblock {\em Math.\ Comp.}, 69:881--909, 2000.

\bibitem{Yosi80}
K.~Yosida.
\newblock {\em Functional Analysis}.
\newblock Springer-Verlag, Berlin, Germany, 1980.

\end{thebibliography}

%\clearpage
%\input{app}

\vspace*{0.5cm}

\end{document}